\newtheorem{thm}{Theorem}[section]
\newtheorem{lem}[thm]{Lemma}
\newtheorem{prop}[thm]{Proposition}
\newtheorem{deflem}[thm]{Definition and Lemma}
\newtheorem{cor}[thm]{Corollary}
\newtheorem{mthm}{Theorem}
\theoremstyle{definition}
\newtheorem{defn}[thm]{Definition}
\theoremstyle{remark}
\newtheorem{remark}[thm]{Remark}
\newtheorem{remarks}[thm]{Remarks}
\newtheorem{example}[thm]{Example}
\newtheorem{examples}[thm]{Examples}
\newcounter{substep}
\def\thesubstep{\arabic{substep}}
\newenvironment{substeps}[1]{%
\refstepcounter{substep}\noindent{(\ref{#1}.\thesubstep)\ }\ }%
{\em}
\newcounter{subsubstep}
\def\thesubsubstep{\arabic{subsubstep}}
\newcommand{\K}{{\mathbb K}} \newcommand{\N}{{\mathbb N}}
\newcommand{\Z}{{\mathbb Z}} \newcommand{\R}{{\mathbb R}}
 \newcommand{\C}{{\mathbb C}}
\newcommand{\ideal}{{\mathcal I}}
\newcommand{\ceros}{{\mathcal Z}}
\newcommand{\gtp}{{\mathfrak p}} \newcommand{\gtq}{{\mathfrak q}} 
\newcommand{\gta}{{\mathfrak a}} \newcommand{\gtb}{{\mathfrak b}}
\newcommand{\gtP}{{\mathfrak P}}
\newcommand{\gtg}{{\mathfrak g}}
\newcommand{\Fhaz}{{\EuScript F}}
\newcommand{\Jhaz}{{\EuScript I}}
\newcommand{\an}{{\EuScript O}}
\newcommand{\An}{{\EuScript A}}
\newcommand{\supp}{\operatorname{supp}}
\newcommand{\id}{\operatorname{id}}
\newcommand{\re}{\operatorname{Re}}
\newcommand{\ima}{\operatorname{Im}}
\newcommand{\x}{{\tt x}} \newcommand{\y}{{\tt y}} 
 \newcommand{\h}{{\tt h}}
\newcommand{\veps}{\varepsilon}
\newcommand{\ol }{\overline}
\newcommand{\qq}[1]{\langle{#1}\rangle}
\numberwithin{equation}{section}
\renewcommand{\theequation}{\thesection.\arabic{equation}}
\begin{document}

\title[On the Nullstellens\"atze for Stein spaces and $C$-analytic sets]{On the Nullstellens\"atze for Stein spaces\\ and $C$-analytic sets}

\author{Francesca Acquistapace, Fabrizio Broglia}
\address{Dipartimento di Matematica, Universit\`a degli Studi di Pisa, Largo Bruno Pontecorvo, 5, 56127 Pisa, Italy}
\email{acquistf@dm.unipi.it, broglia@dm.unipi.it}

\author{Jos\'e F. Fernando}
\address{Departamento de \'Algebra, Facultad de Ciencias Matem\'aticas, Universidad Complutense de Madrid, 28040 MADRID (SPAIN)}
\email{josefer@mat.ucm.es}

\thanks{Authors supported by Spanish GAAR MTM2011-22435. First and second authors also supported by Italian GNSAGA of INdAM and MIUR. This article is the fruit of the close collaboration of the authors in the last ten years and has been performed in the course of several research stays of the first two authors in the Department of Algebra at the Universidad Complutense de Madrid and of the third author in the Department of Mathematics at the Universit\`a di Pisa.}

\date{January 1st, 2014}
\subjclass[2010]{Primary 32C15, 32C25, 32C05, 32C07; Secondary, 11E25, 26E05.
}
\keywords{Nullstellensatz, Stein space, closed ideal, radical, real Nullstellensatz, $C$-analytic set, saturated ideal, \L ojasiewicz's radical, convex ideal, $H$-sets, $H^{\mathsf a}$-set, real ideal, real radical, real-analytic ideal, real-analytic radical, quasi-real ideal}

\begin{abstract}
In this work we prove the real Nullstellensatz for the ring $\an(X)$ of analytic functions on a $C$-analytic set $X\subset\R^n$ in terms of the \em saturation \em of \L ojasiewicz's radical in $\an(X)$: \em The ideal $\ideal(\ceros(\gta))$ of the zero-set $\ceros(\gta)$ of an ideal $\gta$ of $\an(X)$ coincides with the saturation $\widetilde{\sqrt[\text{\L}]{\gta}}$ of \L ojasiewicz's radical $\sqrt[\text{\L}]{\gta}$\em. If $\ceros(\gta)$ has `good properties' concerning Hilbert's 17th Problem, then $\ideal(\ceros(\gta))=\widetilde{\sqrt[\mathsf{r}]{\gta}}$ where $\sqrt[\mathsf{r}]{\gta}$ stands for the \em real radical \em of $\gta$. The same holds if we replace $\sqrt[\mathsf{r}]{\gta}$ with the \em real-analytic radical \em $\sqrt[\mathsf{ra}]{\gta}$ of $\gta$, which is a natural generalisation of the real radical ideal in the $C$-analytic setting. We revisit the classical results concerning (Hilbert's) Nullstellensatz in the framework of (complex) Stein spaces. 

Let $\gta$ be a saturated ideal of $\an(\R^n)$ and $Y_{\R^n}$ the germ of the support of the coherent sheaf that extends $\gta\an_{\R^n}$ to a suitable complex open neighbourhood of $\R^n$. We study the relationship between a \em normal primary decomposition \em of $\gta$ and the decomposition of $Y_{\R^n}$ as the union of its irreducible components. If $\gta:=\gtp$ is prime, then $\ideal(\ceros(\gtp))=\gtp$ if and only if the (complex) dimension of $Y_{\R^n}$ coincides with the (real) dimension of $\ceros(\gtp)$.
\end{abstract}

\maketitle

\section*{Introduction}\label{s1}
\renewcommand{\theequation}{I.\arabic{equation}}

In this paper we characterise the ideals $\gta$ of the algebra $\an(X)$ that have the zero property where $X$ is either a Stein space or a $C$-analytic set (also known as $C$-analytic subset of $\R^n$). Recall that an ideal $\gta$ of $\an(X)$ has the {\em zero property} if it coincides with the ideal $\ideal(\ceros(\gta))$ of all analytic functions on $X$ vanishing on its zero-set $\ceros(\gta)$. More generally, we approach the problem of determining the ideal $\ideal(\ceros(\gta))$ algebraically from an ideal $\gta$ of $\an(X)$. These problems are commonly known as Nullstellens\"atze. The complex and the real-analytic case have deserved the attention of specialists in both matters for a long time.

Our results are new for the general real case. Until now, all known results exist only for two particular situations: 
\begin{itemize}
\item[(1)] compact analytic spaces \cite{j2,rz1} or 
\item[(2)] analytic spaces of low dimensions \cite{ad0,bp}: $0,1$ or $2$. 
\end{itemize}
For the complex case we extend the classical Forster's Nullstellensatz by removing the condition that the involved ideal $\gta$, for which one computes $\ideal(\ceros(\gta))$, is closed.

\renewcommand{\thethm}{\arabic{thm}}
\subsection*{The complex case.}

The main known results concerning the complex analytic Nullstellensatz go back to the 1960's and are due to Forster \cite{of} and Siu \cite{s}. To state the main results, we fix a Stein algebra $\an(X):=H^0(X,\an_X)$, that is, the algebra of global analytic sections on a (reduced) Stein space $(X,\an_X)$. There are crucial differences concerning the behaviour of polynomial functions on an algebraic variety and analytic functions on a Stein space. Besides that $\an(X)$ is neither noetherian nor a unique factorization domain, two main obstructions appear to get a Nullstellensatz. The first one arises because there are proper prime ideals with empty zero-set while the second one appears because the `multiplicity' of an analytic function $G\in\an(X)$ vanishing (identically) on a discrete set can be unbounded. Thus, if another analytic function $F\in\an(X)$ vanishes on the zero-set of $G$ with multiplicity $1$, no power of $F$ can belong to the ideal $G\an(X)$. Classical examples of the previous situations, for which $\K$ denotes either $\R$ or $\C$, are the following:

\begin{example}\label{one}
Let ${\mathfrak U}$ be an ultrafilter of subsets of $\N$ containing all cofinite subsets. For an analytic function $F\in\an(\K)$ we denote the {\em multiplicity} of $F$ at the point $z\in\K$ with ${\rm mult}_z(F)$. Put $M(F,m):=\{\ell\in\N:\ {\rm mult}_\ell(F) \geq m\}$. Consider the non-empty set
$$
\gtp:=\{F\in\an(\K):\ M(F,m)\in{\mathfrak U}\ \forall m\geq0 \}.
$$

Let us check that $\gtp$ is a prime ideal. Indeed, let $F,G\in\gtp$. Then $M(F,m)\cap M(G,m)\subset M(F+G,m)$ because ${\rm mult}_\ell(F+G)\geq\min\{{\rm mult}_\ell(F),{\rm mult}_\ell(G)\}$, so $M(F+G,m)\in{\mathfrak U}$ for all $m\geq0$. On the other hand, if $F\in\gtp$ and $G\in\an(K)$, then ${\rm mult}_\ell(FG)={\rm mult}_\ell(F)+{\rm mult}_\ell(G)$, so $M(FG,m)\supset M(F,m)\in{\mathfrak U}$ for all $m\geq0$.

Suppose $F_1F_2\in\gtp$ but $F_1,F_2\not\in\gtp$. Then there exist $m_1,m_2\geq0$ such that 
$$
M(F_1,m_1),M(F_2,m_2)\notin{\mathfrak U}.
$$ 
Take $m_0:=\max\{m_1,m_2\}$ and note $M(F_1,m_0),M(F_2,m_0)\notin{\mathfrak U}$; hence, $M(F_1,m_0)\cup M(F_2,m_0)\not\in{\mathfrak U}$. On the other hand, 
$$
M(F_1,m_0)\cup M(F_2,m_0)\supset M(F_1F_2,2m_0)\in{\mathfrak U}, 
$$
so also $M(F_1,m_0)\cup M(F_2,m_0)\in{\mathfrak U}$, which is a contradiction. Thus, $\gtp$ is a prime ideal.

Finally, observe $\ceros(\gtp)=\varnothing$. For each $k\geq1$ let $G_k\in\an(\K)$ be an analytic function such that $\ceros(G_k)=\{\ell\in\N:\ \ell\geq k\}$ and ${\rm mult}_\ell(G_k)=\ell$ for all $\ell\geq k$. Since ${\mathfrak U}$ contains all cofinite subsets, we deduce that each $G_k\in\gtp$, so $\ceros(\gtp)\subset\bigcap_{k\geq1}\ceros(G_k)=\varnothing$.
\end{example}

\begin{example}\label{two}
Let $F,G\in\an(\K)$ be the analytic functions given by the infinite products: 
$$
F(z):=\prod_{n\geq1}\Big(1-\frac{z}{n^2}\Big)\quad\text{ and }\quad G(z):=\prod_{n\geq1}\Big(1-\frac{z}{n^2}\Big)^n
$$ 
for all $z\in\K$. The zero-sets of $F$ and $G$ coincide with the set $\{n^2:\ n\geq1\}$ and we denote $\gta=G\an(\K)$. If the classical Nullstellensatz held for $\an(\K)$, there would exist an integer $m\geq 0$ and an analytic function $H\in\an(\K)$ such that $F^m=GH$. Let us compare multiplicities in the previous formula at the point $(m+1)^2$: the left hand side vanishes at the point $(m+1)^2$ with multiplicity $m$ while the right hand side vanishes at the point $(m+1)^2$ with multiplicity $\geq m+1$, which is a contradiction. Thus, we conclude $\ideal(\ceros(\gta))\neq\sqrt{\gta}$.
\end{example}

To control these difficulties, Forster showed first that the prime \em closed \em ideals $\gtp$ of $\an(X)$ endowed with its usual Fr\'{e}chet's topology \cite[VIII.A]{gr} have the zero property, that is, $\ideal(\ceros(\gtp))=\gtp$. Afterwards he proved that the closed ideals $\gta$ of $\an(X)$ admit (as in the noetherian case) a \em normal primary decomposition \em (see \S\ref{npd}). Of course, for a general normal primary decomposition there exist countably many primary ideals $\gtq_i$. 

In this context we extend Forster's Nullstellensatz (see Section \ref{s3} for precise statements) to the non-closed case as we state in the next result. Given an ideal $\gtb$ of $\an(X)$, we denote its closure with respect to the usual Fr\'{e}chet's topology of $\an(X)$ with $\ol{\gtb}$. 

\begin{mthm}[Nullstellensatz]\label{perfect}
Let $(X,\an_X)$ be a Stein space and $\gta\subset\an(X)$ an ideal. Then 
$$
\ideal(\ceros(\gta))=\overline{\sqrt{\gta}}. 
$$
In particular, $\ideal(\ceros(\gta))=\gta$ if and only if ${\gta}$ is radical and closed.
\end{mthm}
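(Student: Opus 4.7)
The plan is to reduce the general case to Forster's Nullstellensatz for closed ideals by passing to the closure $\overline{\gta}$, applying a normal primary decomposition there, and then identifying the resulting intersection of closed primes with $\overline{\sqrt{\gta}}$.

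The inclusion $\overline{\sqrt{\gta}}\subset\ideal(\ceros(\gta))$ is the routine part. Every $F\in\sqrt{\gta}$ satisfies $F^m\in\gta$ for some $m\geq 1$, hence vanishes identically on $\ceros(\gta)=\ceros(\sqrt{\gta})$. Moreover $\ideal(\ceros(\gta))$ is Fr\'echet-closed because Fr\'echet convergence entails uniform convergence on compacta and therefore preserves vanishing on any subset of $X$. Taking closures yields the inclusion.

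For the reverse containment I would first observe that $\ceros(\gta)=\ceros(\overline{\gta})$: the containment $\supset$ is trivial, while the other one follows because, if $G_n\in\gta$ converges to some $F\in\overline{\gta}$, then $F(x)=\lim G_n(x)=0$ for each $x\in\ceros(\gta)$. Consequently $\ideal(\ceros(\gta))=\ideal(\ceros(\overline{\gta}))$, and the problem is reduced to the closed ideal $\overline{\gta}$. Applying Forster's theory, take a normal primary decomposition $\overline{\gta}=\bigcap_{i\ge 1}\gtq_i$ into closed primary ideals $\gtq_i$ whose associated primes $\gtp_i:=\sqrt{\gtq_i}$ are again closed (this is the content of the theory of \S\ref{npd}). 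Since $\ceros(\gtq_i)=\ceros(\gtp_i)$ and Forster's theorem gives $\ideal(\ceros(\gtp_i))=\gtp_i$ for every closed prime, we obtain
$$
\ideal(\ceros(\overline{\gta}))=\ideal\Bigl(\bigcup_{i\ge 1}\ceros(\gtp_i)\Bigr)=\bigcap_{i\ge 1}\gtp_i.
$$

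The remaining task is the identification $\bigcap_i\gtp_i=\overline{\sqrt{\gta}}$, and here lies the main obstacle. The inclusion $\supset$ is immediate: each $\gtp_i$ is a closed prime containing $\overline{\gta}\supset\gta$, hence contains $\sqrt{\gta}$, and being closed contains $\overline{\sqrt{\gta}}$. For the reverse inclusion, my plan is to combine Forster's Nullstellensatz with the primary decomposition to obtain the closed-ideal formula $\overline{\sqrt{\overline{\gta}}}=\bigcap_i\gtp_i$, and then establish the equality $\overline{\sqrt{\gta}}=\overline{\sqrt{\overline{\gta}}}$. The delicate direction of the latter is $\overline{\sqrt{\overline{\gta}}}\subset\overline{\sqrt{\gta}}$, which I intend to prove by showing that $\overline{\sqrt{\gta}}$ is itself a \emph{radical} closed ideal: once this is known, $\overline{\sqrt{\gta}}\supset\gta$ and closedness give $\overline{\sqrt{\gta}}\supset\overline{\gta}$, the radicality then forces $\overline{\sqrt{\gta}}\supset\sqrt{\overline{\gta}}$, and closedness once more yields $\overline{\sqrt{\gta}}\supset\overline{\sqrt{\overline{\gta}}}$. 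Verifying that the Fr\'echet closure of a radical ideal of $\an(X)$ remains radical is the technical heart of the argument, and it is precisely where the analytic machinery proper to Stein spaces (rather than formal closure-and-radical manipulations) has to be invoked. Granted the main identity, the ``in particular'' clause is formal: if $\gta=\ideal(\ceros(\gta))$ then $\gta=\overline{\sqrt{\gta}}$ is manifestly radical and closed; conversely, if $\gta$ is radical and closed then $\overline{\sqrt{\gta}}=\gta$.
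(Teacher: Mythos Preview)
Your reduction and overall plan are sound, but the proof has a genuine gap at exactly the point you flag: you do not prove that $\overline{\sqrt{\gta}}$ is radical, and this is not a formality. That statement is equivalent to the idempotence of the operator $\gta\mapsto\overline{\sqrt{\gta}}$, and without it the passage from the closed ideal $\overline{\gta}$ back to $\gta$ collapses. Your ``closed-ideal formula'' $\overline{\sqrt{\overline{\gta}}}=\bigcap_i\gtp_i$ is also not established by what you write: Forster on the primes gives $\bigcap_i\gtp_i=\ideal(\ceros(\overline{\gta}))$, and from $\overline{\gta}\subset\gtq_i$ you only get the easy containment $\overline{\sqrt{\overline{\gta}}}\subset\bigcap_i\gtp_i$, so the reverse inclusion is precisely the closed case of the theorem --- you are implicitly appealing to a sheaf-theoretic computation of the type carried out in Remark~\ref{int}.

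The paper closes both gaps at once through a concrete multiplier description of the closure (Lemma~\ref{csaturation}): on a Stein space, $\overline{\gtb}=\widetilde{\gtb}$ consists of those $G$ such that for every compact $K$ there exists $H\in\an(X)$ with $\ceros(H)\cap K=\varnothing$ and $HG\in\gtb$. Writing ${\mathfrak R}_1(\gta)$ for this operator applied to $\sqrt{\gta}$ (so ${\mathfrak R}_1(\gta)=\overline{\sqrt{\gta}}$), the idempotence ${\mathfrak R}_1\circ{\mathfrak R}_1={\mathfrak R}_1$ follows by simply composing the two multipliers --- and that idempotence is exactly your missing radicality. The closed case is also handled more directly than in your outline: given $G\in\ideal(\ceros(\gta))$ and a compact $K$, one separates the finitely many primary components meeting $K$ (where Theorem~\ref{primary} supplies a uniform exponent) from the rest $\gta_2$ (where Oka--Weil approximation yields $H\in\gta_2$ close to $1$ on $K$), obtaining $HG\in\sqrt{\gta}$ outright. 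This places $G$ in ${\mathfrak R}_1(\gta)$ without ever needing to identify $\bigcap_i\gtp_i$ with $\overline{\sqrt{\overline{\gta}}}$ separately.
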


We will show in Remark \ref{int} that if $\gta$ is a closed ideal with normal primary decomposition $\gta=\bigcap_{i\in I}\gtq_i$, we have $\overline{\sqrt{\gta}}=\bigcap_{i\in I}\sqrt{\gtq_i}$. Nevertheless, it may happen that $\overline{\sqrt{\gta}}\supsetneq\sqrt{\gta}$ because the radical of a closed ideal $\gta$ needs not to be closed (see Section \ref{s3}). However, the radical of a closed primary ideal $\gtq$ is still closed, see Lemma \ref{lemmaprimary}.

\subsection*{The real case.}

The situation in the real case is more demanding. We have similar initial difficulties to the ones described in the complex analytic case. Examples \ref{one} and \ref{two} are generalised to the real case as follows. 

\begin{examples}\label{three}
(i) The ideal ${\gta}$ in Example \ref{one} is a {\em real} ideal, that is, if a sum of squares $\sum_{i=1}^pf_i^2$ in $\an(\R)$ belongs to $\gta$, then each $f_i\in\gta$. Indeed, assume $f:=\sum_{i=1}^pf_i^2\in\gta$. Since
$$
{\rm mult}_\ell(f)=2\min\{{\rm mult}_\ell(f_1),\ldots,{\rm mult}_\ell(f_p)\},
$$ 
we deduce $M(f,2m)\subset M(f_i,m)$ for all $m\geq0$ and $i=1,\ldots,p$. Thus, since each $M(f,2m)\in{\mathfrak U}$, we deduce $M(f_i,m)\in{\mathfrak U}$ for all $m\geq0$, that is, each $f_i\in\gta$. Thus, $\gta$ is a real prime ideal with empty zero-set.

(ii) Concerning Example \ref{two}, let $f,g\in\an(\R)$ be the corresponding analytic functions defined by the formulas proposed there and let $\gta:=g\an(\R)$. We want to show $\ideal(\ceros(\gta))\neq\sqrt[\mathsf{r}]{\gta}$ where $\sqrt[\mathsf{r}]{\gta}$ is the real radical of $\gta$ (see equation \eqref{eq1} below). Indeed, let us prove $f\not\in\sqrt[\mathsf{r}]{\gta}$. Otherwise there would exist an integer $m\geq1$ and analytic functions $h_1,\ldots,h_p,h\in\an(\R)$ such that
$$
f^{2m}+\sum_{i=1}^ph_i^2=gh.
$$
Comparing the orders at both sides of the previous equality at the point $(2m+1)^2$, we obtain a contradiction.
\end{examples}

Consider a $C$-analytic subset $X\subset \R^n$ and let $\ideal(X)$ be the ideal of all (real) analytic functions vanishing on $X$. The structure sheaf of $X$ is the coherent sheaf $\an_X:=\an_{\R^n}/\ideal(X)\an_{\R^n}$. Its ring of global analytic sections 
$$
\an(X):=H^0(X,\an_X)=\an(\R^n)/\ideal(X)
$$ 
can be seen as a subset of the Stein algebra $\an(\widetilde{X})$ of its \em complexification \em $\widetilde{X}$ (understood as a complex analytic set germ at $X$, see \S\ref{below1}). We stress that $X$ needs not to be coherent as an analytic set. Recall also that Cartan proved in \cite[VIII.Thm.4, pag.60]{c1} that if $Y$ is a Stein space, the closure of an ideal $\gtb$ of $\an(Y)$ coincides with its \em saturation\em
$$
\tilde{\gtb}:=\{F\in\an(Y):\ F_z\in\gtb\an_{Y,z}\ \forall\,z\in Y\}=H^0(Y,\gtb\an_Y).
$$
We endow $\an(X)$ with the topology induced by Fr\'{e}chet's topology of $\an(\widetilde{X})$ but now the saturation
$$
\tilde{\gta}:=\{f\in\an(X):\ f_x\in\gta\an_{X,x}\ \forall\,x\in X\}=H^0(X,\gta\an_X)
$$
of an ideal $\gta$ of $\an(X)$ does not need to be closed. However, as de Bartolomeis proved in \cite{db,db1}, each \em saturated ideal $\gta$ of $\an(X)$ \em (that is, such that $\gta=\tilde{\gta}$) admits a \em normal primary decomposition \em similar to the one devised by Forster in the complex case. Note also that the previous definition of saturation coincides with the one proposed by Whitney for ideals in the ring of smooth functions over a real smooth manifold \cite[II.1.3]{m}.

Before stating our main result, we introduce some terminology. Given $f,g\in\an(X)$, we say that $f\geq g$ if $f(x)\geq g(x)$ for all $x\in X$. Given an ideal $\gta$ of $\an(X)$, we define its {\em \L ojasiewicz radical} as
\begin{equation}\label{eq0}
\sqrt[\text{\L}]{\gta}:=\{g\in\an(X):\ \exists\,f\in\gta,\ m\geq1\text{ such that } f-g^{2m}\geq0\}.
\end{equation}

The notion of \L ojasiewicz radical has been used by many authors to approach different problems mainly related to rings of germs, see for instance \cite{n}, \cite[p. 104]{d}, \cite[1.21]{k} or \cite[\S6]{dm} but also in the global smooth case \cite{abn}. We say that an ideal $\gta$ of $\an(X)$ is \em convex \em if each $g\in\an(X)$ satisfying $|g|\leq f$ for some $f\in\gta$ belongs to $\gta$. In particular, \L ojasiewicz's radical $\sqrt[\text{\L}]{\gta}$ of an ideal $\gta$ of $\an(X)$ is a radical convex ideal. Our main result in this setting is the following. 

\begin{mthm}[Real Nullstellensatz]\label{perfect2}
Let $X\subset\R^n$ be a $C$-analytic set and $\gta$ an ideal of the ring $\an(X)$. Then 
$$
\ideal(\ceros(\gta))=\widetilde{\sqrt[\text{\L}]{\gta}}.
$$
In particular, $\ideal(\ceros(\gta))=\gta$ if and only if $\gta$ is a convex, radical and saturated ideal.
\end{mthm}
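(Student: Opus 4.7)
The plan is to prove the equality by two inclusions, the hard one being $\ideal(\ceros(\gta))\subseteq\widetilde{\sqrt[\text{\L}]{\gta}}$. For the easy direction $\widetilde{\sqrt[\text{\L}]{\gta}}\subseteq\ideal(\ceros(\gta))$, first I would observe that $\ideal(\ceros(\gta))$ is automatically saturated (a germ-wise vanishing on $\ceros(\gta)$ at every point implies global vanishing on $\ceros(\gta)$), so it suffices to prove $\sqrt[\text{\L}]{\gta}\subseteq\ideal(\ceros(\gta))$. If $f-g^{2m}\geq 0$ for some $f\in\gta$ and $m\geq 1$, then on $\ceros(\gta)$ one has $0\leq g^{2m}\leq f=0$, forcing $g\equiv 0$ on $\ceros(\gta)$.

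For the reverse inclusion, fix $g\in\ideal(\ceros(\gta))$; one must show $g_x\in\sqrt[\text{\L}]{\gta}\an_{X,x}$ for every $x\in X$. When $x\notin\ceros(\gta)$, pick any $f\in\gta$ with $f(x)\neq 0$: the trivial inequality $f^{2}-f^{2}\geq 0$ shows that $f\in\sqrt[\text{\L}]{\gta}$, and since $f_x$ is a unit in $\an_{X,x}$, one immediately obtains $g_x\in f_x\an_{X,x}\subseteq\sqrt[\text{\L}]{\gta}\an_{X,x}$. The substantive case is $x\in\ceros(\gta)$, where the strategy combines two ingredients. First, a local \L ojasiewicz-type inequality on a relatively compact neighbourhood $V$ of $x$ of the form $g^{2m}\leq c\sum_{i=1}^{k}f_i^{2}$ with $c>0$ and $f_1,\dots,f_k\in\gta$, available because $g$ vanishes on $\ceros(\gta)\cap V$ and $f_1,\dots,f_k$ can be chosen so that their common zero-set on $V$ equals $\ceros(\gta)\cap V$. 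Second, an analytic ``damping'' function $\phi\in\an(X)$, positive at $x$ and decaying sufficiently fast off $V$, such that the global inequality $F-(\phi g)^{2m}\geq 0$ holds on all of $X$, with $F:=c\sum_{i=1}^{k}f_i^{2}\in\gta$. Once these are in place, $\phi g\in\sqrt[\text{\L}]{\gta}$ by definition, and since $\phi_x$ is a unit, $g_x=\phi_x^{-1}(\phi g)_x\in\sqrt[\text{\L}]{\gta}\an_{X,x}$.

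The principal technical obstacle is the construction of the damping function $\phi$ globally on the (possibly non-compact and non-coherent) $C$-analytic set $X$, compatibly with preserving non-negativity of $F-(\phi g)^{2m}$ at every point of $X$ and not merely on $V$; the natural route is to use a proper analytic embedding of $X$ into some $\R^{N}$ and multiply by the restriction of an entire function of Weierstrass type with prescribed rapid decay. A secondary delicacy is that $\an(X)$ is not noetherian, so in order to have a globally defined finite family $f_1,\dots,f_k\in\gta$ controlling the stalk at $x$, it is natural to reduce first to saturated ideals and invoke de~Bartolomeis' normal primary decomposition quoted in the introduction. Finally, the ``In particular'' assertion is an immediate corollary: $\widetilde{\sqrt[\text{\L}]{\gta}}=\gta$ forces $\gta$ to be saturated and $\sqrt[\text{\L}]{\gta}=\gta$, and the latter equality unravels, using $f-g^{2m}\geq 0\Rightarrow f\geq 0$ together with convexity and radicality, to the two assertions that $\gta$ is convex and radical; the converse implications are straightforward from the definitions.
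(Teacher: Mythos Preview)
Your overall architecture matches the paper's: the easy inclusion is fine, and for the hard one you produce, for each point $x$, an analytic multiplier $\phi$ non-vanishing at $x$ with $\phi g\in\sqrt[\text{\L}]{\gta}$, which is exactly the ${\mathfrak C}_2$-description of the saturation (Lemma~\ref{csaturation}). The ``In particular'' part is also handled correctly.

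The gap is the construction of the damping function. Rapid decay at infinity is \emph{not} the obstruction; the obstruction sits along $\ceros(\gta)$ away from $V$. At a point $y\in\ceros(\gta)\setminus V$ one has $F(y)=0$, and the order of vanishing of $g^{2m}$ at $y$ may be strictly smaller than that of $F$: the local \L ojasiewicz exponent is unbounded as $y$ ranges over $\ceros(\gta)$ (this is precisely Example~\ref{two}). Then no positive value $\phi(y)>0$ can make $F-(\phi g)^{2m}\geq 0$ hold in a neighbourhood of $y$; the multiplier must actually \emph{vanish} at $y$, and to ever higher order as $y$ varies. A Weierstrass-type entire function with prescribed decay does not provide this. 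There is a second, related problem: your $f_1,\dots,f_k$ only cut out $\ceros(\gta)$ on $V$, so $\ceros(F)\setminus\ceros(\gta)$ may be non-empty, and at such points $g$ need not vanish, again forcing $\phi$ to have zeros there.

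The paper addresses both issues with two dedicated tools that your proposal lacks substitutes for. The Crespina Lemma (Lemma~\ref{crespina}) builds a single global positive semidefinite $f\in\tilde{\gta}$ with $\ceros(f)=\ceros(\gta)$ exactly, replacing your local finite family. Then Proposition~\ref{Loj}, a global \L ojasiewicz inequality imported from \cite{abs}, yields for each compact $K$ an $h\in\an(X)$ with $\ceros(h)\cap K=\varnothing$ and $|f|\geq(hg)^{2m}$ on all of $X$; the point of that proposition is precisely that $h$ is constructed to vanish on a proper $C$-analytic subset $Y_1\subset\ceros(f)$ missing $K$, absorbing the unbounded-exponent phenomenon above. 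The reduction from general $\gta$ to saturated $\gta$ is then done formally via the ${\mathfrak C}_1$-calculus (no primary decomposition is needed).
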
 

If we compare the previous result to the real Nullstellensatz for the ring of polynomial functions on a real algebraic variety, we observe that \L ojasiewicz's radical plays an analogous role to the one performed by the classical {\em real radical}. In our context the real radical of an ideal $\gta$ of $\an(X)$ is
\begin{equation}\label{eq1}
\sqrt[\mathsf{r}]{\gta}:=\Big\{f\in\an(X):\ f^{2m}+\sum_{k=1}^pa_k^2\in\gta\ \text{and}\ a_i\in\an(X),\ m,p\geq0\Big\}.
\end{equation}
Recall that $\gta$ is a \em real ideal \em if $\gta=\sqrt[\mathsf{r}]{\gta}$.

It is natural to search relations between both radicals. This question forces us to compare positive semidefinite analytic functions with sums of squares of analytic functions in close relation to Hilbert's 17th Problem for the analytic setting \cite{abfr3}. In Section \ref{s4} we see that both radicals coincide in the abstract setting of the real spectrum of a ring $A$. In Section \ref{s6} we prove the equality $\widetilde{\sqrt[\text{\L}]{\gta}}=\widetilde{\sqrt[\mathsf{r}]{\gta}}$ for an ideal $\gta$ of $\an(X)$ with the property that every positive semidefinite analytic function whose zero-set is $Z:=\ceros(\gta)$ can be represented as a (finite) sum of squares of meromorphic functions on $X$. Any $C$-analytic set $Z\subset\R^n$ with the previous property will be called $H$-{\em set}. Some examples of $H$-sets are the following: discrete sets \cite{bks} and compact sets \cite{j2,rz1}. Moreover, if $X$ is either an analytic curve or a coherent analytic surface, every analytic subset of $X$ is an $H$-set \cite{abfr1,abfr2}.

Since infinite (convergent) sums of squares of meromorphic functions make sense in $\an(X)$ (see Section \ref{s2} and \cite{abf,abfr3,bp}), we define the {\em real-analytic radical} of an ideal $\gta$ of $\an(X)$ as 
\begin{equation}\label{eq2}
\sqrt[\mathsf{ra}]{\gta}:=\Big\{f\in\an(\R^n):\ f^{2m}+\sum_{k\geq1}a_k^2\in\gta\ \text{and}\ a_i\in\an(\R^n),\ m\geq0\Big\}.
\end{equation}
We say that $\gta$ is a \em real-analytic ideal \em if $\gta=\sqrt[\mathsf{r}]{\gta}$.

The equality $\widetilde{\sqrt[\mathsf{ra}]{\gta}}=\widetilde{\sqrt[\text{\L}]{\gta}}$ holds for an ideal $\gta$ of $\an(X)$ with the property that every positive semidefinite analytic function whose zero-set is $Z:=\ceros(\gta)$ can be represented as an infinite sum of squares of meromorphic functions on $X$. We call those $C$-analytic sets with the previous property $H^{\mathsf a}$-{\em sets}. An example of a $H^\mathsf{a}$-set is a locally finite union of disjoint compact analytic sets. Thus, if all connected components of $X$ are compact, then all $C$-analytic subsets of $X$ are $H^\mathsf{a}$-sets. The following result collects all this information.

\begin{mthm}\label{h17nss}
Let $X\subset\R^n$ be a $C$-analytic set and $\gta$ an ideal of $\an(X)$ such that $\ceros(\gta)$ is a $H$-{set}. Then 
$$
\ideal(\ceros(\gta))=\widetilde{\sqrt[\mathsf{r}]{\gta}}. 
$$
In particular, $\ideal(\ceros(\gta))=\gta$ if and only if $\gta$ is real and saturated. The previous statements hold for a $H^{\mathsf a}$-{set} $\ceros(\gta)$ replacing `real' by `real-analytic'.
\end{mthm}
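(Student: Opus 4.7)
The plan is to reduce, via Theorem~\ref{perfect2}, to the identification
\[
\widetilde{\sqrt[\text{\L}]{\gta}}\,=\,\widetilde{\sqrt[\mathsf{r}]{\gta}}
\]
(and the analogous identity with $\sqrt[\mathsf{ra}]{\gta}$ on the right when $\ceros(\gta)$ is only an $H^{\mathsf{a}}$-set). The ``in particular'' clauses are immediate once this is established: if $\gta$ is real (resp.\ real-analytic) and saturated, the main equality collapses to $\gta=\ideal(\ceros(\gta))$, and conversely the right hand side is automatically saturated and real (resp.\ real-analytic). The containment $\widetilde{\sqrt[\mathsf{r}]{\gta}}\subset\widetilde{\sqrt[\text{\L}]{\gta}}$ is formal: if $g^{2m}+\sum_{k=1}^p a_k^2=f\in\gta$ then $f-g^{2m}=\sum a_k^2\geq0$, so $g\in\sqrt[\text{\L}]{\gta}$ with witness $f$ and exponent $m$. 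The same manipulation, combined with the convergent infinite sums of squares developed in Section~\ref{s2}, gives $\sqrt[\mathsf{ra}]{\gta}\subset\sqrt[\text{\L}]{\gta}$ as well.

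For the substantive direction I fix $g\in\sqrt[\text{\L}]{\gta}$, witnessed by $f\in\gta$ and $m\geq1$ with $\Phi:=f-g^{2m}\geq0$. The obstruction to applying the $H$-hypothesis directly to $\Phi$ is that $\ceros(\Phi)$ may strictly contain $Z:=\ceros(\gta)$. I would remedy this by producing a positive semidefinite $\varphi\in\widetilde{\gta}$ with $\ceros(\varphi)=Z$: both sides of the target equality are saturated, so one may replace $\gta$ by $\widetilde{\gta}$, and then a weighted countable sum of squares of generators of the sheaf $\gta\an_X$, convergent in $\an(X)$ by Section~\ref{s2}, furnishes such a $\varphi$. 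Now $\Phi+\varphi\geq0$ has zero-set exactly $Z$, so the $H$-hypothesis yields $a_1,\ldots,a_N,b\in\an(X)$ with $b^2(\Phi+\varphi)=\sum_{k=1}^N a_k^2$, hence
\[
(bg^m)^2+\sum_{k=1}^N a_k^2\,=\,b^2(f+\varphi)\,\in\,\gta,
\]
and therefore $bg^m\in\sqrt[\mathsf{r}]{\gta}$.

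The last step is to pass from $bg^m\in\sqrt[\mathsf{r}]{\gta}$ to $g\in\widetilde{\sqrt[\mathsf{r}]{\gta}}$. At every $x\in X$ with $b(x)\neq0$ the germ $b_x$ is a unit of $\an_{X,x}$, so $(g^m)_x\in\sqrt[\mathsf{r}]{\gta}\an_{X,x}$; since any real ideal is radical in the ordinary sense (apply the defining property iteratively to $a^{2^k}=(a^{2^{k-1}})^2$), this gives $g_x\in\sqrt[\mathsf{r}]{\gta}\an_{X,x}$. At a point $x\in\ceros(b)$ I would refine the construction by rewriting the global $H$-identity with a new denominator $b'$ satisfying $b'(x)\neq0$---either by exploiting the freedom in the $H$-representation (adjusting $\varphi$ to steer the denominator away from $x$) or by combining the global $H$-identity with the analytic local Hilbert~17 in $\an_{X,x}$ to produce, at each $x$, a local representation whose denominator is a unit at $x$, and then patching. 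The $H^{\mathsf{a}}$-case runs in parallel, with $\sum_{k=1}^N a_k^2$ replaced by a convergent infinite sum of squares, so that $bg^m$ lies in $\sqrt[\mathsf{ra}]{\gta}$ by definition~\eqref{eq2}.

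The hard part is precisely the management of the denominator $b$: a single global $H$-identity only controls $g$ away from the proper analytic subset $\ceros(b)\subset X$, and extending membership in $\sqrt[\mathsf{r}]{\gta}\an_{X,x}$ to every $x\in X$ requires either a refinement of the $H$-representation at the points $x\in\ceros(b)$ or a local-to-global patching step of the type indicated above. A secondary but nontrivial point is the construction of a positive semidefinite $\varphi\in\widetilde{\gta}$ with $\ceros(\varphi)=Z$, which is what forces one to work at the saturated level and is also what distinguishes the $H$-statement (finite SOS, real radical) from its $H^{\mathsf{a}}$-analogue (countable SOS, real-analytic radical).
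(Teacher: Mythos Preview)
Your reduction to the equality $\widetilde{\sqrt[\text{\L}]{\gta}}=\widetilde{\sqrt[\mathsf{r}]{\gta}}$ via Theorem~\ref{perfect2}, the easy inclusion, and the construction of a positive semidefinite $\varphi\in\widetilde{\gta}$ with $\ceros(\varphi)=Z$ (this is the Crespina Lemma~\ref{crespina}) are all fine. The gap is exactly where you say it is, and your suggested fixes do not close it. Adjusting $\varphi$ does not give you control over the denominator $b$: the $H$-representation of a fixed positive semidefinite function produces \emph{some} $b$, with no mechanism to steer $\ceros(b)$ away from a prescribed real point. Invoking local Hilbert~17 at $x\in\ceros(b)$ gives you a representation with numerators and denominators in $\an_{X,x}$, not global elements of $\sqrt[\mathsf{r}]{\gta}$, so it does not produce the global $h$ with $h(x)\neq0$ and $hg\in\sqrt[\mathsf{r}]{\gta}$ that the $\mathfrak C_2$-description of the saturation requires. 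Your argument, as written, only shows $g_x\in\sqrt[\mathsf{r}]{\gta}\,\an_{X,x}$ off the proper analytic set $\ceros(b)$, and there is no patching principle available to extend this across $\ceros(b)$.

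The paper avoids this pointwise obstruction by changing the target: it first reduces to the case where $\gta=\gtp$ is a saturated real (resp.\ real-analytic) \emph{prime} ideal, via normal primary decomposition (Proposition~\ref{propproprieta} and Corollary~\ref{lemmaidreal}) and the observation that $\widetilde{\sqrt[\mathsf{r}]{\gta}}$ is itself real and saturated. In the prime case one argues by contradiction: pick $g\in\ideal(\ceros(\gtp))\setminus\gtp$, build $f_1=f-h_1^2g^{2m}\geq0$ with $\ceros(f_1)=\ceros(\gtp)$, and obtain $b^2f_1=\sum a_i^2$ from the $H$-hypothesis. The crucial new ingredient is the Perturbing Denominators Lemma~\ref{move}: a real-analytic diffeomorphism $\varphi$ of $\R^n$ is produced with $f_1\circ\varphi=f_1\cdot(\text{unit})$ and such that the holomorphic extension of $b_1:=b\circ\varphi$ does \emph{not} vanish identically on the complex zero-set $\ceros_\C(\gtp)$. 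This gives $b_1\notin\gtp$ --- a condition on membership in the prime, not on the value at a point --- and then $b_1^2f\in\gtp$ together with the real(-analytic) property of $\gtp$ forces $b_1h_1g^m\in\gtp$, a contradiction. The point is that primality converts the denominator problem into the single algebraic condition $b\notin\gtp$, which Lemma~\ref{move} achieves by a complex-geometric perturbation; your pointwise approach cannot be rescued without an analogue of that lemma.
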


The previous result applies if $X$ is either an analytic curve, a coherent analytic surface or a $C$-analytic set whose connected components are all compact, so the real Nullstellensatz holds for such an $X$ in terms of the real radical (or the real-analytic radical). 

In Section \ref{s7} we prove that the class of ideals of $\an(X)$ that have the zero property enjoys the expected properties as it happens with the corresponding class in the algebraic setting. More precisely, it holds:

\begin{mthm}\label{complexreal}
Let $\gtq\subset\an(\R^n)$ be a saturated primary ideal. Then the following assertions are equivalent
\begin{itemize}
\item[(i)] $\ideal(\ceros(\gtq))=\sqrt{\gtq}$.
\item[(ii)] $\dim(\ceros_\C(\gtq))=\dim(\ceros(\gtq))$
\item[(iii)] There exists $x\in\ceros(\gtq)$ such that $\dim(\ceros(\gtq\an_{\R^n,x}))=\dim(\ceros(\gtq\an_{\C^n,x}))$.
\end{itemize}
\end{mthm}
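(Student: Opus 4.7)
The strategy hinges on two structural features of the primary hypothesis: the complex analytic set germ $Y:=Y_{\R^n}$ is equi-dimensional of some dimension $d$, and its complex irreducible components form a single orbit under complex conjugation. Both follow from the earlier analysis of how a normal primary decomposition in $\an(\R^n)$ relates to the irreducible decomposition of its complexification: equi-dimensionality reflects that all complex irreducible components of $Y$ carry the same associated real prime $\sqrt{\gtq}$, while transitivity of the conjugation action reflects that $\sqrt{\gtq}$ is prime in the real ring $\an(\R^n)$. These inputs will be used throughout.

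With this in hand the equivalence (ii)$\Leftrightarrow$(iii) reduces to a dimension count. At every $x\in\ceros(\gtq)$ one has
$$
\dim(\ceros(\gtq\an_{\R^n,x}))\leq\dim(\ceros(\gtq\an_{\C^n,x}))=\dim_x Y=d,
$$
the last two equalities by equi-dimensionality. Taking the maximum in $x$ gives $\dim\ceros(\gtq)\leq d=\dim\ceros_\C(\gtq)$, and global equality is equivalent to equality at some point, which is precisely (iii).

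The main content is (i)$\Leftrightarrow$(ii). For (ii)$\Rightarrow$(i), I would take $f\in\ideal(\ceros(\gtq))$ and a Stein open neighbourhood $\Omega\subset\C^n$ of $\R^n$ on which $f$ extends holomorphically as $\tilde f$ and $\gtq$ extends to a coherent sheaf with support $Y\cap\Omega$. At a point $x\in\ceros(\gtq)$ with $\dim_x\ceros(\gtq)=d$, the complex Zariski closure $\widetilde{\ceros(\gtq)_x}\subset Y_x$ is a complex analytic germ of dimension $d$; since $Y_x$ is pure of dimension $d$ this closure is a union of complex irreducible components of $Y_x$. The function $\tilde f$ vanishes on the real germ and hence on its complex Zariski closure, so $\tilde f$ vanishes on the germ of some global complex irreducible component $Y^{(\alpha)}$ of $Y$, and by the identity principle on $Y^{(\alpha)}$ it vanishes identically there. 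Since $\tilde f$ commutes with complex conjugation and conjugation acts transitively on the irreducible components of $Y$, the vanishing propagates to every component, hence to all of $Y$. Applying Theorem \ref{perfect} to the closed ideal $\gtq\an(\Omega)$ gives $\tilde f\in\overline{\sqrt{\gtq\an(\Omega)}}$, and Lemma \ref{lemmaprimary} (closedness of the radical of a closed primary ideal) lets us drop the closure, so $\tilde f^m\in\gtq\an(\Omega)$ for some $m\geq1$; restricting to $\R^n$ yields $f^m\in\gtq$, that is, $f\in\sqrt{\gtq}$.

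For the converse I would argue contrapositively: if $\dim Y>\dim\ceros(\gtq)$, the complex Zariski closure of $\ceros(\gtq)$ in $Y$ is a proper complex analytic subset, so there exists a holomorphic function on $Y\cap\Omega$ vanishing on this subset but not on $Y$; extending it to $\Omega$ and symmetrising with its complex conjugate produces an $f\in\an(\R^n)$ that vanishes on $\ceros(\gtq)$ but whose complex extension does not vanish identically on $Y$, and running the previous paragraph backwards forces $f\notin\sqrt{\gtq}$, contradicting (i). The genuine obstacle is the pair of structural inputs of the first paragraph — equi-dimensionality of $Y$ and transitivity of the conjugation action on its irreducible components — which must be extracted from the precise correspondence between the normal primary decomposition of $\gtq$ in $\an(\R^n)$ and the complex-analytic structure of $Y_{\R^n}$ developed in the earlier sections.
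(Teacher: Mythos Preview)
Your overall strategy matches the paper's: both rest on Lemma~\ref{previo0}, which gives exactly your ``structural inputs'' in the sharp form $\ceros_\C(\gtq)=Z_{\R^n}\cup\sigma(Z_{\R^n})$ with $Z_{\R^n}$ irreducible (hence pure-dimensional, and with the conjugation acting transitively on at most two components). Your (ii)$\Leftrightarrow$(iii) and your contrapositive (i)$\Rightarrow$(ii) are fine and parallel the paper's arguments; the paper does (i)$\Rightarrow$(ii) directly by observing that under (i) the germ $\ceros_\C(\gtq)$ coincides with the complexification of $\ceros(\gtq)$, which preserves dimension.

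There is, however, a genuine gap in your (ii)$\Rightarrow$(i). After showing that the holomorphic extension $\tilde f$ vanishes on all of $Y$, you invoke Theorem~\ref{perfect} for ``the closed ideal $\gtq\an(\Omega)$'' and then Lemma~\ref{lemmaprimary} for ``the closed primary ideal'' to drop the closure. Neither hypothesis is justified: the ideal generated by $\gtq$ in $\an(\Omega)$ is not known to be closed, and even its closure $H^0(\Omega,\Jhaz)$ is not shown to be primary in $\an(\Omega)$ (the paper only proves primality of the complex extension when the real radical is a \emph{real} prime, cf.\ the proof of Theorem~\ref{h17nss}, Step~1). So Lemma~\ref{lemmaprimary} cannot be applied to the complex ideal.

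The fix is exactly what the paper does via Lemma~\ref{pasito}: work locally at a point $x\in\ceros(\gtq)$. From $Y_x\subset\ceros(\tilde f)_x$ and the local complex Nullstellensatz you get $\tilde f_x\in\sqrt{\gtq\an_{\C^n,x}}$, hence $f_x^m\in\gtq\an_{\R^n,x}$ for some $m$; now apply Lemma~\ref{lemmaprimary}(i) to the \emph{real} primary ideal $\gtq\subset\an(\R^n)$ to conclude $f^m\in\gtq$. This bypasses any need to know the global complex extension is primary or closed.
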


As it is well-known, condition (iii) in Theorem \ref{complexreal} is equivalent to the existence of a regular point $y\in\ceros(\gtq)$ for the ideal $\sqrt{\gtq}$. Recall that $y\in\ceros(\gtq)$ is {\em regular} for the ideal $\sqrt{\gtq}$ if $\dim(\ceros(\gtq)_y)=k$ and there exists $f_{k+1},\ldots,f_n\in\sqrt{\gtq}$ such that ${\rm rk}(\nabla f_{k+1}(y),\ldots,\nabla f_n(y))=n-k$. The two previous conditions imply that $\ceros(\gtq)\cap U=\ceros(f_{k+1},\ldots,f_n)\cap U$ in a neighbourhood $U$ of $x$.

\subsection*{Structure of the article} In Section \ref{s2} we state Forster's and de Bartolomeis' normal primary decompositions for saturated ideals and recall the meaning of infinite sums of squares in the real-analytic setting. Section \ref{s3} is devoted to the complex Nullstellensatz while the real Nullstellensatz is the content of Section \ref{s5}. In Section \ref{s4} we see that \L oja\-sie\-wicz's radical and the real radical coincide in the abstract setting. We prove in Section \ref{s6} that an affirmative answer for Hilbert's $17^{th}$ Problem implies that the saturations of \L oja\-sie\-wicz's radical and the real radical coincide. We also discuss certain properties concerning {\em convex} and {\em quasi-real} ideals. Finally, we analyse the geometric meaning of the real Nullstellensatz for the ideal $\gta$ in Section \ref{s7}. To that end, we compare the real dimension of the $C$-analytic set $Z:=\ceros(\gta)$ and the complex dimension of the germ $\ceros(\gta\otimes\C)$.

\section{Preliminaries on analytic geometry and saturated ideals}\label{s2}
\renewcommand{\thethm}{\thesection.\arabic{thm}}
\renewcommand{\theequation}{\thesection.\arabic{equation}}

Although we deal with real-analytic functions, we make an extended use of complex analysis. In the following \em holomorphic functions \em refer to the complex case and \em analytic functions \em to the real case. For a further reading about holomorphic functions we refer the reader to \cite{gr}. 

\subsection{General terminology.}\label{below1}

Denote the coordinates in $\C^n$ with $z:=(z_1,\ldots,z_n)$ where $z_i:=x_i+\sqrt{-1}y_i$. As usual $x_i:=\re(z_i)$ and $y_i:=\ima(z_i)$ are respectively the \em real \em and the \em imaginary parts \em of $z_i$. Consider the conjugation $\sigma:\C^n\to\C^n,\ z\mapsto\ol{z}:=(\ol{z_1},\ldots,\ol{z_n})$ of $\C^n$, whose set of fixed points is $\R^n$. A subset $A\subset\C^n$ is \em invariant \em if $\sigma(A)=A$. Obviously, $A\cap\sigma(A)$ is the biggest invariant subset of $A$. Let $\Omega\subset\C^n$ be an invariant open set and $F:\Omega\to\C$ a holomorphic function. We say that $F$ is \em invariant \em if $F(z)=\ol{(F\circ\sigma)(z)}$ for all $z\in\Omega$. This implies that $F$ restricts to a (real) analytic function on $\Omega\cap\R^n$. Conversely, if $f$ is analytic on $\R^n$, it can be extended to an invariant holomorphic function $F$ on some invariant open neighbourhood $\Omega$ of $\R^n$. In general,
$$
\Re(F):\Omega\to\C,\ z\mapsto\tfrac{F(z)+\ol{(F\circ\sigma)(z)}}{2}\quad\text{and}\quad\Im(F):\Omega\to\C,\ z\mapsto\tfrac{F(z)-\ol{(F\circ\sigma)(z)}}{2\sqrt{-1}}
$$ 
are the \em real \em and the \em imaginary \em parts of $F$, which satisfy $F=\Re(F)+\sqrt{-1}\,\Im(F)$. An analytic subsheaf $\Fhaz$ of $\an_{\Omega}$ is called \em invariant \em if for each open invariant subset $U\subset\Omega$ and each $F\in H^0(U,\Fhaz)$, the holomorphic function $\ol{F\circ\sigma}\in\Fhaz(U)$. If $\Fhaz$ is an invariant sheaf on $\Omega$ and $F_1,\ldots,F_r\in H^0(\Omega,\Fhaz)$ generate $\Fhaz_z$ as an $\an_{\Omega,z}$-module for some $z\in\Omega$, then also $\Re(F_1),\Im(F_1),\ldots,\Re(F_r),\Im(F_r)$ generate $\Fhaz_z$ as an $\an_{\Omega,z}$-module.

We will use $\ceros(\cdot)$ to denote the zero-set of $(\cdot)$ and $\ideal(\cdot)$ to denote the ideal of functions vanishing identically on $(\cdot)$. For instance, if $(X,\an_X)$ is either a Stein space or a real coherent analytic space and $S\subset\an(X)$, \em the zero-set of $S$ \em is 
$$
\ceros(S):=\{x\in X:\ F(x)=0\ \forall F\in S\}.
$$ 
If $Z\subset X$, the \em ideal of $Z$ \em is 
$$
\ideal(Z):=\{F\in\an(X):\ F(x)=0\ \forall x\in Z\}. 
$$
For the sake of clearness we denote the elements of $\an(X)$ with capital letters if $(X,\an_X)$ is a Stein space and with small letters if $(X,\an_X)$ is a real coherent analytic space. If a property holds for both types of spaces, we keep capital letters.

If $(X,\an_X)$ is a coherent (paracompact) real-analytic space, there exists a (paracompact) complex analytic space $(\widetilde{X},\an_{\widetilde{X}})$ such that
\begin{itemize}
\item[(i)] $X\subset\widetilde{X}$ is a closed subset and $\an_{\widetilde{X},x}=\an_{X,x}\otimes\C$ for all $x\in X$.
\item[(ii)] There exists an antiholomorphic involution $\sigma:\widetilde{X}\to\widetilde{X}$ whose fixed locus is $X$.
\item[(iii)] $X$ has a fundamental system of invariant open Stein neighbourhoods in $\widetilde{X}$. 
\item[(iv)] If $X$ is reduced, then $\widetilde{X}$ is also reduced.
\end{itemize} 
The analytic space $(\widetilde{X},\an_{\widetilde{X}})$ is called a \em complexification \em of $X$. It holds that the germ of $(\widetilde{X},\an_{\widetilde{X}})$ at $X$ is unique up to an isomorphism. For further details see \cite{c2,t,wb}.

A $C$-analytic set $X\subset\R^n$ endowed with its (coherent) structure sheaf $\an_X=\an_{\R^n}/\ideal(X)\an_{\R^n}$ has a well-defined complexification exactly as above, except for the second condition in (i), which may fail for the points of a $C$-analytic subset $Y\subset X$ of smaller dimension. From now on a (reduced) real-analytic space is a pair $(X,\an_X)$ constituted of a $C$-analytic set $X\subset\R^n$ and its structure sheaf $\an_X$.
 
\subsection{Saturated and closed ideals.}
Let $(X,\an_X)$ be either a Stein space or a real-analytic space and $\gta\subset\an(X)$ an ideal.
We consider its \em saturation \em 
$$ 
\tilde{\gta}:=\{F\in\an(X):\ F_x\in\gta\an_{X,x}\ \forall\,x\in X\}.
$$
Of course, the ideal $\gta$ is \em saturated \em if $\tilde{\gta}=\gta$.

In the complex case $\tilde{\gta}$ coincides with the closure of $\gta$ in $\an(X)$ endowed with its usual Fr\'{e}chet topology \cite[VIII.Thm.4, pag.60]{c1}. Thus, saturated ideals coincide with closed ideals. If $(X,\an_X)$ is a reduced Stein space, its Fr\'{e}chet topology is induced by a countable collection of the natural seminorms $\|\cdot\|_m:=\sup_{K_m}\{|\cdot|\}$ where $\{K_m\}_{m\geq1}$ is an exhaustion of $X$ by compact sets. Of course, this topology does not depend on the chosen exhaustion \cite[VIII.A]{gr}.

On the other hand, if $(X,\an_X)$ is a real-analytic space, the inherited topology on $\an(X)$ is induced by the following convergence: \em A sequence $\{f_k\}_{k\geq1}$ of elements of $\an(X)$ converges to $f\in\an(X)$ if there exist a complexification $(\widetilde{X},\an_{\widetilde{X}})$ of $(X,\an_X)$ and holomorphic extensions $F_k$ of $f_k$ and $F$ of $f$ such that $F_k$ converges to $F$ in $H^0(\widetilde{X},\an_{\widetilde{X}})$ endowed with its Fr\'{e}chet topology \em \cite[\S1.5]{db}. With this topology $\an(X)$ is a complete topological $\R$-algebra.

The saturation arises `naturally' when dealing with Nullstellens\"atze to manage the existence of proper prime ideals and proper real prime ideals with empty zero-set (see Examples \ref{one} and \ref{three} in the Introduction). 

\subsection{Closed primary ideals and normal primary decomposition.}\label{npd}

Let $(X,\an_X)$ be either a Stein space or a real-analytic space. One of the main properties of the closed and saturated ideals of $\an(X)$ is that they enjoy a locally finite primary decomposition. Before entering into further details, we recall some preliminary definitions. Given a collection of ideals $\{\gta_i\}_{i\in I}$ of $\an(X)$, we say that it is {\em locally finite} if the family of their zero-sets $\{\ceros(\gta_i)\}_{i\in I}$ is locally finite in $X$. A decomposition $\gta=\bigcap_{i \in I}\gta_i$ of an ideal $\gta$ of $\an(X)$ is called {\em irredundant} if $\gta\neq\bigcap_{i\in K}\gta_i$ for each proper subset $K\subsetneq I$. Moreover, a primary decomposition $\gta=\bigcap_{i\in I}\gtq_i$ of an ideal $\gta$ of $\an(X)$ is called {\em normal} if it is locally finite, irredundant and the associated prime ideals $\gtp_i:=\sqrt{\gtq_i}$ are pairwise distinct. As usual, a primary ideal $\gtq_j$ is called an {\em isolated primary component} if $\gtp_j$ is minimal among the primes $\{\gtp_i\}_{i\in I}$. Otherwise, $\gtq_j$ is an {\em immersed primary component}.

Before we present the normal primary decomposition of saturated ideals due to Forster, we recall some results concerning saturated primary ideals.
 
\begin{lem}\label{lemmaprimary}
Let $\gtq\subset\an (X)$ be a primary ideal and $F\in\an(X)$. We have: 
\begin{enumerate}
\item[(i)] If $x\in\ceros(\gtq)$, then $F\in\gtq$ if and only if $F_x\in\gtq\an_{X,x}$.
\item[(ii)] $\gtq$ is saturated if and only if $\ceros(\gtq)\neq\varnothing$.
\item[(iii)] $\ceros(\gtq)$ is connected. 
\end{enumerate}
\end{lem}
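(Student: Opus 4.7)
The plan is to address the three items in order, since (ii) and (iii) will follow rather cleanly once (i) is in hand.

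For (i), the forward implication is immediate from $\gtq\subset\gtq\an_{X,x}$. For the converse, set $\gtp:=\sqrt{\gtq}$ and note that $\ceros(\gtp)=\ceros(\gtq)\ni x$, so every $T\in\an(X)$ with $T(x)\neq0$ lies outside $\gtp$. The primary property then reduces the goal to producing such a $T$ with $TF\in\gtq$: once we have $T\notin\sqrt{\gtq}$ and $TF\in\gtq$, the definition of primary forces $F\in\gtq$. To construct $T$, the hypothesis $F_x\in\gtq\an_{X,x}$ yields a germ identity $F_x=\sum_i h_{i,x}\,g_{i,x}$ with $g_i\in\gtq$, and representatives $h_i$ give a local relation $F|_U=\sum_i h_i\,g_i|_U$ on a Stein neighbourhood $U$ of $x$ (passing to a Stein complexification $\widetilde{X}$ in the real case, as in \S\ref{below1}). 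I would then invoke coherence of the ideal sheaf $\gtq\an_X$ (it is of finite type by noetherianity of the stalks, hence coherent by Oka's theorem) together with Cartan's theorems A and B to upgrade the local identity to a global one $TF=\sum_i H_i\,g_i$ with $T,H_i\in\an(X)$ and $T(x)\neq 0$. This globalization step is the main technical obstacle and is essentially where the Stein-theoretic content enters.

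For (ii), the ``if'' direction is a one-line consequence of (i): picking $x\in\ceros(\gtq)$, any $F\in\tilde{\gtq}$ satisfies $F_x\in\gtq\an_{X,x}$ by definition of saturation, hence $F\in\gtq$ by (i), and so $\tilde{\gtq}=\gtq$. Conversely, if $\ceros(\gtq)=\varnothing$, then at every $y\in X$ some $G\in\gtq$ has $G(y)\neq 0$, so $G$ is a unit of $\an_{X,y}$ and $\gtq\an_{X,y}=\an_{X,y}$; thus the sheaf $\gtq\an_X$ equals $\an_X$, whence $\tilde{\gtq}=H^0(X,\gtq\an_X)=\an(X)$. Since proper primary ideals are not the whole ring, $\gtq\neq\tilde{\gtq}$ and $\gtq$ is not saturated.

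For (iii), if $\ceros(\gtq)=\varnothing$ the claim is vacuous. Otherwise, apply (ii) to the prime ideal $\gtp:=\sqrt{\gtq}$ (which is primary and satisfies $\ceros(\gtp)=\ceros(\gtq)\neq\varnothing$) to conclude that $\gtp$ is saturated, hence closed. Forster's classical Nullstellensatz for closed prime ideals now gives $\ideal(\ceros(\gtp))=\gtp$. Standard reasoning then yields that $\ceros(\gtp)$ is irreducible: a decomposition $\ceros(\gtp)=A_1\cup A_2$ into proper closed analytic subsets (available in the Stein/complexified setting by grouping the locally finite collection of irreducible components) would produce $F_i\in\ideal(A_i)\setminus\gtp$ with $F_1F_2$ vanishing on $\ceros(\gtp)$, so $F_1F_2\in\ideal(\ceros(\gtp))=\gtp$, contradicting primality. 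Hence $\ceros(\gtq)=\ceros(\gtp)$ is irreducible, and in particular connected.
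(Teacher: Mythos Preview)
Your arguments for (i) and (ii) are correct and match the paper's approach; the paper in fact just cites Forster \cite{of} and de Bartolomeis \cite{db} for (i), and for (ii) argues exactly as you do.

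Your argument for (iii) has a genuine gap in the real-analytic case. You invoke Forster's Nullstellensatz $\ideal(\ceros(\gtp))=\gtp$ for saturated prime ideals, but this is a theorem about Stein algebras (Theorem~\ref{primary}) and is not available for $\an(X)$ when $X$ is a $C$-analytic set: the whole thrust of the paper is that the real Nullstellensatz requires additional hypotheses (convexity, $H$-sets, etc.), and at this point in the development none of Theorems~\ref{perfect2}, \ref{h17nss}, \ref{complexreal} is yet proved, so invoking any of them here would be circular. Passing to the complexification does not rescue the argument either, since the extension of a prime ideal of $\an(X)$ to $\an(\widetilde{X})$ need not be prime. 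Your approach therefore establishes (iii) only in the Stein case.

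The paper handles the two cases separately. In the Stein case it appeals directly to Theorem~\ref{primary}, essentially as you do. In the real-analytic case it uses an elementary trick that bypasses any Nullstellensatz: assuming $\ceros(\gtq)=Y_1\sqcup Y_2$ with $Y_i$ closed and nonempty, one takes $f\in\gtq$ with $\ceros(f)=\ceros(\gtq)$ (via Lemma~\ref{crespina}, noting that $\gtq$ is saturated by (ii)) and, using Whitney approximation, an analytic $g$ with $g>0$ on $Y_1$ and $g<0$ on $Y_2$. Then $h_i:=\sqrt{f^2+g^2}+(-1)^ig$ is analytic with $\ceros(h_i)=Y_i$, and $h_1h_2=f^2\in\gtq$ while neither $h_i$ vanishes on all of $\ceros(\gtq)$, contradicting that $\gtq$ is primary.
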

\begin{proof}
(i) See \cite[\S3.1.Lem.]{of} and \cite[2.1.2]{db}. In the statement of both results the authors assume that the ideal $\gtq$ is saturated but this fact is only used to assure $\ceros(\gtq)\neq\varnothing$.

(ii) The `only if' implication is clear. For the converse, choose a point $x\in\ceros(\gtq)$ and observe that by (i) $\gtq=\{F\in\an(X):\ F_x\in\gtq\an_{X,x}\}$; hence, $\gtq$ is the `saturation' of a local ideal, so it is saturated.

(iii) If $(X,\an_X)$ is a Stein space, the result follows from Theorem \ref{primary}. If $(X,\an_X)$ is a real-analytic space, we recall a classical trick. Assume by contradiction that $\ceros(\gtq)$ is not connected and let $Y_1,Y_2\subset\ceros(\gtq)$ be two closed disjoint subsets such that $\ceros(\gtq)=Y_1\cup Y_2$. Observe in particular that $\gtq$ must be saturated. Let $f\in\gtq$ be such that $\ceros(\gtq)=\ceros(f)$ (see Lemma \ref{crespina} below) and $g\in X$ an analytic function such that $g$ is strictly positive on $Y_1$ and strictly negative on $Y_2$ (use Whitney's approximation lemma to construct $g$). Observe that $\ceros(f^2+g^2)=\varnothing$, so $h_i=\sqrt{f^2+g^2}+(-1)^ig$ is an analytic function whose zero-set is $Y_i$. Moreover, $h_1h_2=f^2\in\gtq$. However, $h_1,h_2\not\in\sqrt{\gtq}$ because neither of them vanishes on $\ceros(\gtq)$, which is a contradiction. Hence, $\ceros(\gtq)$ is connected.
\end{proof}

\begin{lem}\em (\cite[\S4.Hilfssatz 5]{of} and \cite[2.2.10]{db1}) \em\label{db1210}
Let $\{\gta_i\}_{i\in I}$ be a locally finite family of saturated ideals of $\an(X)$ and $\gtp\subset\an(X)$ a prime saturated ideal such that $\bigcap_{i\in I}\gta_i\subset\gtp$. Then there exists $i\in I$ such that $\gta_i\subset\gtp$. 
\end{lem}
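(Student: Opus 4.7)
The plan is to argue by contradiction: assuming no $\gta_i$ is contained in $\gtp$, we build a global section of $\bigcap_i\gta_i$ that escapes $\gtp$, contradicting the inclusion $\bigcap_i\gta_i\subset\gtp$. Since $\gtp$ is a saturated prime (hence primary) ideal, Lemma \ref{lemmaprimary}(ii) provides a point $x\in\ceros(\gtp)$, and Lemma \ref{lemmaprimary}(i) gives the fundamental localization principle: for every $F\in\an(X)$, $F\in\gtp$ if and only if $F_x\in\gtp\an_{X,x}$.

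Local finiteness at $x$ concentrates the problem onto the finite set $I_0:=\{i\in I:x\in\ceros(\gta_i)\}$; for $i\notin I_0$, an element $g_i\in\gta_i$ with $g_i(x)\neq 0$ shows that $(\gta_i)\an_{X,x}=\an_{X,x}$. For each $i\in I_0$, by assumption choose $f_i\in\gta_i\setminus\gtp$; primality of $\gtp$ ensures that the finite product $F:=\prod_{i\in I_0}f_i$ is not in $\gtp$, so by the localization principle $F_x\notin\gtp\an_{X,x}$. At the same time, $F\in\gta_i$ for every $i\in I_0$ and $(\gta_i)\an_{X,x}=\an_{X,x}$ for $i\notin I_0$, so $F_x$ belongs to the intersection $\bigcap_{i\in I}(\gta_i\an_{X,x})$.

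To upgrade this local witness to a global obstruction, consider the sheaf of ideals $\Fhaz:=\bigcap_{i\in I}\gta_i\an_X$. Local finiteness reduces this intersection near any point to a finite one of coherent ideal sheaves, so $\Fhaz$ is coherent and, since taking stalks commutes with locally finite intersections of subsheaves, $\Fhaz_x=\bigcap_i(\gta_i\an_{X,x})$ contains $F_x$. Cartan's Theorem A — in the Stein case directly and in the coherent real-analytic case via the invariant Stein complexification — then ensures that the global sections of $\Fhaz$ generate every stalk; hence $F_x=\sum_{\ell}u_\ell(G_\ell)_x$ for some $u_\ell\in\an_{X,x}$ and $G_\ell\in H^0(X,\Fhaz)$. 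Saturation of each $\gta_i$ forces $G_\ell\in\bigcap_i\gta_i\subset\gtp$, so $(G_\ell)_x\in\gtp\an_{X,x}$ for every $\ell$; but then $F_x=\sum_\ell u_\ell(G_\ell)_x\in\gtp\an_{X,x}$, contradicting the previous paragraph.

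The most delicate point is the combination of coherence of $\Fhaz$ with the validity of Cartan's Theorem A in both the Stein and the coherent real-analytic settings: once the locally finite intersection is identified with a finite one in a neighbourhood of each point, these are standard consequences of coherence and the Stein character of the (invariant) complexification, but they are the structural results that allow the passage from local information at a single point of $\ceros(\gtp)$ to a genuinely global element of $\bigcap_i\gta_i$.
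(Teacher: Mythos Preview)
The paper does not give its own proof of this lemma; it simply quotes the result from Forster \cite[\S4.Hilfssatz 5]{of} and de Bartolomeis \cite[2.2.10]{db1}. Your argument is correct and is essentially the standard one behind those references: pick $x\in\ceros(\gtp)$ using Lemma~\ref{lemmaprimary}(ii), use local finiteness to reduce to the finite set $I_0$ of indices with $x\in\ceros(\gta_i)$, form the finite product $F=\prod_{i\in I_0}f_i\notin\gtp$, and then invoke coherence of the locally finite intersection $\Fhaz=\bigcap_i\gta_i\an_X$ together with Cartan's Theorem~A (directly in the Stein case, via the invariant Stein complexification in the real case) plus saturation of the $\gta_i$ to push the contradiction back through Lemma~\ref{lemmaprimary}(i). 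The only small point worth making explicit is the edge case $I_0=\varnothing$: then the empty product $F=1$ still works, since $1\notin\gtp$ and $\Fhaz_x=\an_{X,x}$.
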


Now we recall the normal primary decomposition of saturated ideals of $\an(X)$.

\begin{prop}\em (\cite[\S5]{of} and \cite[Thm. 2.3.6]{db}) \em\label{propproprieta}
Let $\gta\subset\an(X)$ be a saturated ideal of $\an(X)$. Then $\gta$ admits a normal primary decomposition $\gta=\bigcap_i\gtq_i$ such that all primary ideals $\gtq_i$ are saturated. Moreover, the prime ideals $\gtp_i:=\sqrt{\gtq_i}$ and the primary isolated components are uniquely determined by $\gta$ and do not depend on the normal primary decomposition of $\gta$.
\end{prop}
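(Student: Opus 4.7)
The plan is to build the decomposition locally using that each stalk $\an_{X,x}$ is a Noetherian local ring, and then to glue the local primary components into coherent ideal subsheaves whose global sections give the $\gtq_i$. For uniqueness I will combine Lemma~\ref{db1210} with a localisation argument around an isolated component. The real-analytic case will be handled by passing to the complexification $\widetilde{X}$ and using the invariant sheaf machinery of \S\ref{below1}, so that in both settings I can appeal to Cartan's Theorems A and B.

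For existence, at each $x\in\ceros(\gta)$ the Noetherian ideal $\gta\an_{X,x}$ admits a finite primary decomposition, with associated prime ideals cutting out irreducible analytic germs. Coherence of $\gta\an_X$ should ensure that these germs organise into a locally finite family $\{V_i\}_{i\in I}$ of globally defined irreducible closed analytic subsets of $X$. For each $i\in I$, I set $\gtp_i:=\ideal(V_i)$ and assemble the $\gtp_{i,x}$-primary components of $\gta\an_{X,x}$, for $x\in V_i$, into a coherent ideal subsheaf $\gtQ_i\subset\an_X$; then $\gtq_i:=H^0(X,\gtQ_i)$ is saturated by construction, and by Lemma~\ref{lemmaprimary}(i) applied at any point of $V_i=\ceros(\gtq_i)$ it is primary with $\sqrt{\gtq_i}=\gtp_i$. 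The equality $\gta=\bigcap_i\gtq_i$ is then a stalkwise check, valid because $\gta$ is itself saturated; local finiteness was arranged in the construction, irredundancy follows by discarding superfluous components, and the $\gtp_i$ are pairwise distinct by their geometric origin.

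For uniqueness, let $\gta=\bigcap_i\gtq_i=\bigcap_j\gtq'_j$ be two normal primary decompositions. From $\bigcap_i\gtq_i\subset\gtq'_j\subset\gtp'_j$ and Lemma~\ref{db1210} I get some $\gtp_i\subset\gtp'_j$; the symmetric step combined with pairwise distinctness forces equality, so the two families of associated primes coincide. For an isolated $\gtp_i$, I pick $x\in V_i$ avoiding every other $V_k$ and every $\ceros(\gtp'_\ell)$ with $\gtp'_\ell\neq\gtp_i$; then the stalks at $x$ of both $\gtq_i$ and the corresponding $\gtq'_j$ equal the stalk of $\gta$ at $x$, and saturation recovers the entire ideal from its stalk at $x$ via Lemma~\ref{lemmaprimary}(i). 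This shows that the isolated primary components are intrinsically determined by $\gta$.

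The main obstacle will be the gluing step in the construction of $\gtQ_i$: showing that the pointwise assignment $x\mapsto (\gtp_{i,x}\text{-primary component of }\gta\an_{X,x})$ actually defines a coherent subsheaf of $\an_X$, and that its global sections form a primary ideal rather than merely a saturated one. This requires careful control over how primary decompositions behave under localisation along the irreducible subvariety $V_i$, and in the real case the further requirement that the construction commute with the antiholomorphic involution on an invariant Stein neighbourhood of $X$. This is precisely the technical heart of the results of Forster and de Bartolomeis, and I do not see a way to bypass it.
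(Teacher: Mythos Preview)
The paper does not supply a proof of this proposition: it is quoted verbatim from Forster \cite[\S5]{of} and de~Bartolomeis \cite[Thm.~2.3.6]{db}, and the text moves directly to Remark~\ref{int}. So there is no paper proof to compare against; I can only comment on your sketch on its own terms.

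Your existence outline is broadly faithful to the strategy in the cited sources, and you are candid that the decisive step---showing that the stalkwise primary components $x\mapsto(\gtp_{i,x}\text{-primary part of }\gta\an_{X,x})$ assemble into a \emph{coherent} subsheaf whose global sections are genuinely primary---is exactly the technical core you cannot bypass. That is an honest acknowledgement rather than a gap you have overlooked.

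There is, however, a real gap in your uniqueness argument for the full set of associated primes. From $\bigcap_i\gtq_i\subset\gtp'_j$ and Lemma~\ref{db1210} you obtain some $\gtq_i\subset\gtp'_j$, hence $\gtp_i\subset\gtp'_j$; symmetrically some $\gtp'_k\subset\gtp_i$. This chain argument, together with pairwise distinctness, shows only that the \emph{minimal} primes of the two families coincide: nothing prevents an embedded prime $\gtp'_j$ on one side from strictly containing the $\gtp_i$ you found, with no matching prime on the other side forced to equal it. In the Noetherian prototype the uniqueness of \emph{all} associated primes requires an intrinsic characterisation (e.g.\ as the primes of the form $\sqrt{(\gta:f)}$ for suitable $f$, or as annihilators of elements of $\an(X)/\gta$), and Forster's proof proceeds along those lines in the Stein setting. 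Your argument as written does not supply such a characterisation, so the claim ``the two families of associated primes coincide'' is not justified. Your subsequent argument for the uniqueness of isolated primary components is sound once the minimal primes are matched up, and that much your chain argument does deliver.
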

\begin{remark}\label{int}
Let us briefly show that if $\gta\subset\an(X)$ is a saturated ideal of $\an(X)$ and $\gta=\bigcap_i\gtq_i$ is a normal primary decomposition (such that all primary ideals $\gtq_i$ are saturated), then
\begin{equation}\label{inteq}
\widetilde{\sqrt{\gta}}=\bigcap_{i\in I}\sqrt{\gtq_i}.
\end{equation}
Indeed, by Lemma \ref{lemmaprimary}(ii) each $\sqrt{\gtq_i}$ is a prime saturated ideal. On the other hand, for each $x\in X$ there exists a finite set $J_x\subset I$ such that
$$
\gtq_i\an_{X,x}=\an_{X,x}\quad\text{(and so}\quad\sqrt{\gtq_i}\an_{X,x}=\an_{X,x})\quad\forall i\not\in J_x. 
$$
Thus, we get
\begin{multline*}
\bigcap_{i\in I}(\sqrt{\gtq_i}\an_{X,x})=\bigcap_{i\in J_x}(\sqrt{\gtq_i}\an_{X,x})=\Big(\bigcap_{i\in J_x}\sqrt{\gtq_i}\Big)\an_{X,x}=\sqrt{\Big(\bigcap_{i\in J_x}\gtq_i\Big)}\an_{X,x}\\
=\sqrt{\Big(\bigcap_{i\in J_x}\gtq_i\Big)\an_{X,x}}=\sqrt{\Big(\bigcap_{i\in I}\gtq_i\Big)\an_{X,x}}=\sqrt{\gta\an_{X,x}}=\sqrt{\gta}\an_{X,x}.
\end{multline*}
Finally, by \cite[Proof of Theorem 2.2.2, p.48]{db} we conclude the required equality
$$
\widetilde{\sqrt{\gta}}=H^0(X,\sqrt{\gta}\an_{X,x})=H^0\Big(X,\bigcap_{i\in I}(\sqrt{\gtq_i}\an_{X,x})\Big)=\bigcap_{i\in I}\sqrt{\gtq_i}.
$$
\end{remark}

As the reader can straightforwardly check, the normal primary decompositions enjoy the good behaviour one can expect when dealing with radical, real and real-analytic ideals.

\begin{cor}\label{lemmaidreal}
Let $\gta\subset\an(X)$ be a saturated ideal and $\gta=\bigcap_i\gtq_i$ a normal primary decomposition of $\gta$. We have:
\begin{enumerate}
\item[(i)] If $\gta$ is a radical, then each $\gtq_i$ is prime and the normal primary decomposition is unique.
\item[(ii)] If $\gta$ is a real (resp. real-analytic) ideal, every $\gtq_i$ is a real (resp. real-analytic) prime ideal and the normal primary decomposition is unique.
\end{enumerate}
\end{cor}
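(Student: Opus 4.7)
The plan is to deduce (i) from the identity $\widetilde{\sqrt{\gta}}=\bigcap_i\sqrt{\gtq_i}$ obtained in Remark~\ref{int}, together with the uniqueness of associated primes and isolated primary components in Proposition~\ref{propproprieta}, and then obtain (ii) from (i) by a short multiplicative trick that clears all primes except one.

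For (i), since $\gta$ is radical and saturated one has $\gta=\widetilde{\sqrt{\gta}}=\bigcap_{i\in I}\gtp_i$ where $\gtp_i:=\sqrt{\gtq_i}$. I would then extract a second normal primary decomposition from the family $\{\gtp_i\}_{i\in I}$ by restricting to the subset $J\subseteq I$ of indices for which $\gtp_i$ is minimal in that family. The fact that every $\gtp_k$ dominates some $\gtp_j$ with $j\in J$ is a Zorn-type argument whose descending chains must terminate, because the corresponding ascending chains of zero-sets $\ceros(\gtp_i)$ through a fixed point are forced to be finite by local finiteness. The resulting intersection $\gta=\bigcap_{i\in J}\gtp_i$ is a normal primary decomposition: the subfamily remains locally finite, the primes are pairwise distinct, and Lemma~\ref{db1210} shows it is irredundant because two minimal primes cannot be comparable. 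Uniqueness of the associated primes in Proposition~\ref{propproprieta} applied to the two decompositions $\bigcap_{i\in I}\gtq_i$ and $\bigcap_{i\in J}\gtp_i$ then forces $J=I$. Hence every $\gtp_i$ is minimal and every $\gtq_i$ is an isolated primary component, and uniqueness of isolated components yields $\gtq_i=\gtp_i$. The same argument applied to any other normal primary decomposition of $\gta$ shows it must coincide with $\bigcap_{i\in I}\gtp_i$.

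For (ii), a real (or real-analytic) ideal is automatically radical, so (i) already supplies that each $\gtq_i=\gtp_i$ is prime and that the decomposition is unique; only the reality of each $\gtp_j$ remains. Fix $j\in I$ and suppose $f^{2m}+\sum_k a_k^2\in\gtp_j$ (a finite sum in the real case, a convergent series in $\an(X)$ in the real-analytic case). Irredundancy of $\gta=\bigcap_i\gtp_i$ produces $b\in\bigcap_{i\neq j}\gtp_i$ with $b\notin\gtp_j$, and the identity
$$b^{2m}\Big(f^{2m}+\sum_k a_k^2\Big)=(bf)^{2m}+\sum_k(b^m a_k)^2$$
places the left-hand side simultaneously in $\gtp_j$ (by hypothesis) and in $\bigcap_{i\neq j}\gtp_i$ (since $b^{2m}$ lies there), hence in $\gta$. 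Because $\gta$ is real (resp.\ real-analytic), $bf\in\gta\subset\gtp_j$; primality of $\gtp_j$ and $b\notin\gtp_j$ then yield $f\in\gtp_j$.

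The main obstacle I anticipate is the extraction of the normal sub-decomposition indexed by minimal primes in (i), where the possibly uncountable cardinality of $I$ requires a careful use of local finiteness to terminate the Zorn chains and to identify $J=I$ via uniqueness of associated primes. A minor additional check, specific to the real-analytic case, is that the countable series $\sum_k(b^m a_k)^2=b^{2m}\sum_k a_k^2$ still converges in $\an(X)$, which follows at once from continuity of multiplication by $b^{2m}$ in the ambient Fr\'{e}chet topology.
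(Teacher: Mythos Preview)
Your proof is correct. The paper does not actually supply a proof of this corollary; it simply writes ``As the reader can straightforwardly check, the normal primary decompositions enjoy the good behaviour one can expect when dealing with radical, real and real-analytic ideals,'' so there is nothing to compare against directly.

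A couple of remarks. Your argument for (i) is sound: the key point that below each $\gtp_{i_0}$ there are only finitely many $\gtp_i$ follows cleanly from local finiteness at any point of $\ceros(\gtp_{i_0})$ (which is nonempty by Lemma~\ref{lemmaprimary}(ii)), so the Zorn step is in fact a finiteness step and your worry about uncountable $I$ is moot. The identification $J=I$ via uniqueness of associated primes is exactly right, and then uniqueness of isolated components finishes it. For (ii), the multiplicative trick of choosing $b\in\bigcap_{i\neq j}\gtp_i\setminus\gtp_j$ and pushing $b^{2m}(f^{2m}+\sum_k a_k^2)$ into $\gta$ is precisely the device the paper itself employs later in the proofs of Corollary~\ref{ahora}(i) and Corollary~\ref{qrr}(i), so your approach is fully in the spirit of the paper. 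The convergence check for $\sum_k(b^m a_k)^2$ in the real-analytic case via continuity of multiplication is adequate.
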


\subsection{Infinite sum of squares.}

Let $(X,\an_X)$ be a real-analytic space. Following the propositions in \cite[1.3]{abfr3} for a real-analytic manifold, we say that an element $f\in\an(X)$ is \em an infinite sum of squares of meromorphic functions on $X$ \em if there exists a non-zero divisor $g\in\an(X)$ such that $g^2f$ is an absolutely convergent series $\sum_{k\geq1}f_k^2$ in $\an(X)$, that is, there exist a complexification $(\widetilde{X},\an_{\widetilde{X}})$ of $(X,\an_X)$ and holomorphic extensions $F_k$ of $f_k$, $F$ of $f$ and $G$ of $g$ such that $G^2F=\sum_{k\geq1}F_k^2$ and $\sum_{k\geq1}F_k^2$ is an absolutely convergent series with respect to the Fr\'{e}chet topology of $H^0(\widetilde{X},\an_{\widetilde{X}})$. In other words, for each compact set $K\subset\widetilde{X}$ the series $\sum_{k\geq1}\sup_K|F^2_k|$ is convergent. For further details see \cite{abf,abfr3,fe}.

\section{The complex analytic Hilbert's Nullstellensatz}\label{s3}

The purpose of this section is to prove Theorem \ref{perfect}. We recall Forster's results about the Nullstellensatz for Stein algebras when dealing with closed ideals.

\begin{thm}[Closed primary case]\label{primary}
Let $(X,\an_X)$ be a Stein space and $\gtq\subset\an(X)$ a closed primary ideal. Then
$$
\ideal(\ceros(\gtq))=\sqrt{\gtq}.
$$
Moreover,
\begin{itemize}
\item[(i)] There exists a positive integer $m\geq 1$ such that $(\sqrt{\gtq})^m\subset\gtq$.
\item[(ii)] If $\gtp\subset\an(X)$ is a closed prime ideal, then $\ideal(\ceros(\gtp))=\gtp$.
\end{itemize}
\end{thm}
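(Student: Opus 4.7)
The plan is to establish (ii) first for closed prime ideals, then bootstrap to the primary case for the main equality, and finally handle the uniform exponent assertion (i) via a coherence plus connectedness argument.

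\textbf{Closed prime case (ii).} Fix $\gtp$ closed and prime. By Lemma \ref{lemmaprimary}(ii) applied to $\gtp$ as a primary ideal, $Z := \ceros(\gtp)$ is non-empty. First I would verify that the stalk $\gtp\an_{X,x}$ is prime in $\an_{X,x}$ at each $x \in Z$: if germs $F_x, G_x \not\in \gtp\an_{X,x}$ had product in $\gtp\an_{X,x}$, one would multiply suitable global representatives by global sections to produce $\widetilde F, \widetilde G \in \an(X)$ with $\widetilde F \widetilde G \in \gtp$ (using Lemma \ref{lemmaprimary}(i)) while $\widetilde F, \widetilde G \not\in \gtp$, contradicting primeness. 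With primeness of each stalk in hand, R\"uckert's local analytic Nullstellensatz yields $\ideal_x(\ceros_x(\gtp\an_{X,x})) = \gtp\an_{X,x}$. Hence for any $F \in \ideal(Z)$ one has $F_x \in \gtp\an_{X,x}$ at every $x \in Z$ (and trivially at $x \not\in Z$, since the stalk is the whole ring there), so $F \in \widetilde{\gtp}$; closedness then gives $F \in \gtp$.

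\textbf{Primary case, main equality.} Let $\gtq$ be closed primary. The inclusion $\sqrt{\gtq} \subset \ideal(\ceros(\gtq))$ is immediate. For the reverse I would mimic the prime argument: show that $\gtq\an_{X,x}$ is primary in the noetherian local ring $\an_{X,x}$, with $\sqrt{\gtq\an_{X,x}} = \sqrt{\gtq}\an_{X,x}$ prime, again using Lemma \ref{lemmaprimary}(i) together with the global primariness of $\gtq$. R\"uckert's theorem applied stalkwise then gives $F_x \in \sqrt{\gtq}\an_{X,x}$ for every $F \in \ideal(Z)$ and every $x \in X$, so $F \in \widetilde{\sqrt{\gtq}}$. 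It therefore remains to prove that $\sqrt{\gtq}$ is saturated, which is precisely where (i) enters.

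\textbf{Uniform exponent (i) and saturation of $\sqrt{\gtq}$.} For each $x \in Z$, noetherianity of $\an_{X,x}$ gives a least integer $m(x) \ge 1$ with $(\sqrt{\gtq}\an_{X,x})^{m(x)} \subset \gtq\an_{X,x}$. Coherence of $\gtq\an_X$ and of $\sqrt{\gtq}\an_X$ implies that for each integer $m$ the quotient sheaf $(\sqrt{\gtq}\an_X)^m / \bigl((\sqrt{\gtq}\an_X)^m \cap \gtq\an_X\bigr)$ is coherent, so its vanishing locus $\{x : m(x) \le m\}$ is open in $X$. Lemma \ref{lemmaprimary}(iii) guarantees that $Z$ is connected; combined with the fact that $\gtq$ is \emph{globally} primary (so the associated prime $\sqrt{\gtq}\an_{X,x}$ and its primary exponent cannot jump as $x$ moves in $Z$), one deduces that $m(\cdot)$ is locally constant on $Z$, hence constant, producing a uniform $m$ with $(\sqrt{\gtq})^m \subset \gtq$. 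With (i) in hand, if $F \in \widetilde{\sqrt{\gtq}}$ then $F_x \in \sqrt{\gtq}\an_{X,x}$ for every $x$, so $F^m_x \in \gtq\an_{X,x}$ for every $x$; closedness $\widetilde{\gtq} = \gtq$ yields $F^m \in \gtq$, hence $F \in \sqrt{\gtq}$. Thus $\sqrt{\gtq}$ is saturated, and $\ideal(\ceros(\gtq)) = \widetilde{\sqrt{\gtq}} = \sqrt{\gtq}$; item (ii) falls out as the radical prime case.

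\textbf{Main obstacle.} The delicate point is the uniform exponent in (i). Upper semicontinuity of $m(\cdot)$ comes cheaply from coherence, but bounding it on the (generally non-compact) connected set $Z$ requires the extra input that $\gtq$ is primary \emph{globally}, not merely locally; this is what forces the associated prime, and with it the primary exponent, to be constant along $Z$. Once that constancy is in place, the rest of the argument runs smoothly, because closedness of $\gtq$ converts stalkwise membership into global membership via the identification $\gtq = H^0(X, \gtq\an_X)$.
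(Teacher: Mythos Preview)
The paper does not give its own proof of this theorem; it is quoted from Forster~\cite{of}. Your outline is in the right spirit, but it takes a detour that introduces genuine gaps, while a much shorter route via Lemma~\ref{lemmaprimary}(i) at a \emph{single} point is available.

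Here is the direct argument. Pick any $x_0\in\ceros(\gtq)$ (non-empty by Lemma~\ref{lemmaprimary}(ii)). For the main equality, if $F\in\ideal(\ceros(\gtq))$ then $F_{x_0}$ vanishes on the germ $\ceros(\gtq)_{x_0}=\ceros(\gtq\an_{X,x_0})$, so by R\"uckert's local Nullstellensatz $F_{x_0}^k\in\gtq\an_{X,x_0}$ for some $k$, and Lemma~\ref{lemmaprimary}(i) gives $F^k\in\gtq$, i.e.\ $F\in\sqrt{\gtq}$. For (i), noetherianity of $\an_{X,x_0}$ yields an $m$ with $\big(\sqrt{\gtq\an_{X,x_0}}\big)^m\subset\gtq\an_{X,x_0}$; since $\sqrt{\gtq}\,\an_{X,x_0}\subset\sqrt{\gtq\an_{X,x_0}}$, for any $F_1,\dots,F_m\in\sqrt{\gtq}$ one has $(F_1\cdots F_m)_{x_0}\in\gtq\an_{X,x_0}$, whence $F_1\cdots F_m\in\gtq$ again by Lemma~\ref{lemmaprimary}(i). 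Item~(ii) is then the special case $\gtq=\gtp$. No local primariness, no connectedness, no semicontinuity is needed.

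Your route has three problems. First, the appeal to Lemma~\ref{lemmaprimary}(iii) is circular in this paper: look at its proof---the Stein case is deduced \emph{from} Theorem~\ref{primary}. Second, the ``lifting germs to global representatives'' step for primeness/primariness of the stalk does not work as written, because $\an(X)\to\an_{X,x}$ is almost never surjective on a Stein space (it already fails for $X=\C$, $x=0$: not every convergent power series is the Taylor series of an entire function), so arbitrary germs cannot be promoted to global sections. Forster's actual proof that $\gtq\an_{X,x}$ is primary is more delicate---but, as above, it is not needed for the theorem. Third, for the exponent in~(i) you establish only upper semicontinuity of $m(\cdot)$ (openness of $\{m(x)\le m\}$); the assertion that global primariness ``forces the exponent not to jump'' is not an argument, and in any case connectedness plus local constancy is unnecessary once one works at a single point via Lemma~\ref{lemmaprimary}(i).
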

\begin{thm}[Closed general case]\label{fo}
Let $(X,\an_X)$ be a Stein space and $\gta\subset\an(X)$ a closed ideal. Consider a normal primary decomposition $\gta=\bigcap_{i\in I}\gtq_i$ of $\gta$. For each $i\in I$, define
\begin{align*}
\h(\gtq_i,\gta):=&\inf\left\{k\in\N:\ F^k\in\gtq_i,\ \forall F\in\ol{\sqrt{\gta}}\right\},\\
\h(\gtq_i):=&\inf\{k\in\N:\ F^k\in\gtq_i,\ \forall F\in\sqrt{\gtq_i}\},\\ 
\h(\gta):=&\inf\left\{k\in\N:\ F^k\in\gta,\ \forall F\in\ol{\sqrt{\gta}}\right\}. 
\end{align*}
Then we have
\begin{itemize}
\item[(i)] $\h(\gta)=\sup_{i\in I}\{\h(\gtq_i,\gta)\}$;
\item[(ii)] $\sqrt{\gta}$ is closed if and only if $\h(\gta)<+\infty$;
\item[(iii)] If $\gta$ does not have immersed primary components, then $\h(\gta)=\sup_{i\in I}\{\h(\gtq_i)\}$;
\item[(iv)] $\ideal(\ceros(\gta))=\sqrt{\gta}$ if and only if $\h(\gta)<+\infty$ and if such is the case, then $\sqrt{\gta}^{\h(\gta)}\subset\gta$.
\end{itemize}
\end{thm}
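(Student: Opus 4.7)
The plan is to deduce parts (i), (iii) and (iv) formally from the normal primary decomposition $\gta=\bigcap_{i\in I}\gtq_i$, the identity $\ol{\sqrt{\gta}}=\bigcap_i\sqrt{\gtq_i}$ of Remark \ref{int}, and Theorem \ref{primary} (which furnishes $\h(\gtq_i)<+\infty$ for each $i$), reserving the substantive work for the converse of (ii). Part (i) is immediate: $F^k\in\gta$ if and only if $F^k\in\gtq_i$ for every $i\in I$, so the infimum of those $k\in\N$ for which $F^k\in\gta$ holds for all $F\in\ol{\sqrt{\gta}}$ is the supremum over $i$ of the corresponding infimum for $\gtq_i$.

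For (iv) I would invoke Theorem \ref{perfect}: since $\ideal(\ceros(\gta))=\ol{\sqrt{\gta}}$, the equality $\ideal(\ceros(\gta))=\sqrt{\gta}$ amounts to $\sqrt{\gta}$ being closed, which by (ii) is equivalent to $\h(\gta)<+\infty$. To upgrade the pointwise bound $F^{\h(\gta)}\in\gta$ to the ideal inclusion $\sqrt{\gta}^{\h(\gta)}\subset\gta$, I would use the polarisation identity
\[
n!\,F_1\cdots F_n=\sum_{J\subset\{1,\dots,n\}}(-1)^{n-|J|}\Bigl(\sum_{j\in J}F_j\Bigr)^{n}
\]
with $n=\h(\gta)$: each sum $\sum_{j\in J}F_j$ lies in $\sqrt{\gta}\subset\ol{\sqrt{\gta}}$, so each term on the right lies in $\gta$, and $n!$ is invertible in the $\C$-algebra $\an(X)$.

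For (iii), the inclusion $\ol{\sqrt{\gta}}\subset\sqrt{\gtq_i}$ already gives $\h(\gtq_i,\gta)\le\h(\gtq_i)$. In the absence of immersed primary components, irredundancy of the decomposition provides a point $x_0\in\ceros(\gtq_i)\setminus\bigcup_{j\ne i}\ceros(\gtq_j)$ at which $\ol{\sqrt{\gta}}\an_{X,x_0}$ coincides with $\sqrt{\gtq_i}\an_{X,x_0}$. Combining Cartan's Theorem B for the coherent sheaf $\ol{\sqrt{\gta}}\an_X$ with the local finiteness of $\{\ceros(\gtq_j)\}_j$, I would globalise any prescribed germ at $x_0$ to an element of $\ol{\sqrt{\gta}}$; choosing a germ that witnesses $\h(\gtq_i)$ yields the reverse inequality $\h(\gtq_i)\le\h(\gtq_i,\gta)$, and the formula then follows from (i).

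The main obstacle is the hard direction of (ii): starting from $\sqrt{\gta}$ closed, produce a uniform exponent. The easy direction is trivial, since $F^{\h(\gta)}\in\gta$ forces $F\in\sqrt{\gta}$ for every $F\in\ol{\sqrt{\gta}}$. For the converse I would argue by contradiction using (i): if $\h(\gta)=+\infty$, pick indices $i_n\in I$ and elements $F_n\in\ol{\sqrt{\gta}}=\sqrt{\gta}$ with $F_n^n\notin\gtq_{i_n}$. Exploiting the local finiteness of $\{\ceros(\gtq_i)\}_i$, I would construct holomorphic cutoffs $G_n\in\an(X)$ that are units modulo $\gtq_{i_n}$ while lying in $\gtq_{i_m}$ for $m\ne n$, and form a sum $F=\sum_n\lambda_nG_nF_n$ with positive scalars $\lambda_n$ decaying rapidly enough to force convergence in the Fr\'echet topology of $\an(X)$. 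Such an $F$ lies in the closed ideal $\sqrt{\gta}$, yet its reduction modulo $\gtq_{i_n}$ is a nonzero multiple of $F_n^{\,}$, so $F^m\notin\gtq_{i_n}$ for $m<n$ and hence no power of $F$ lies in $\gta$, contradicting $F\in\sqrt{\gta}$. The delicate point is arranging the cutoffs and the decay rates simultaneously: fast enough for Fr\'echet convergence, slow enough to preserve the residue of $F$ modulo each $\gtq_{i_n}$.
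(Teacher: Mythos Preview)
The paper does not prove Theorem \ref{fo}: it is recalled from Forster \cite{of} without argument, so there is no proof in the paper to compare against. Assessed on its own, your treatment of (i), (iii) and (iv) is correct. Invoking Theorem \ref{perfect} for (iv) is logically sound, since that theorem's proof uses only Theorem \ref{primary} and the normal primary decomposition, not Theorem \ref{fo}; the polarisation identity cleanly upgrades the pointwise bound to the ideal inclusion. For (iii) the Cartan~B lift works but is heavier than necessary: if $F\in\sqrt{\gtq_i}$ witnesses $\h(\gtq_i)$ and $H\in\bigcap_{j\ne i}\sqrt{\gtq_j}\setminus\sqrt{\gtq_i}$ (such $H$ exists since $\gtq_i$ is isolated, by Lemma \ref{db1210}), then $HF\in\ol{\sqrt{\gta}}$ and, because $\gtq_i$ is primary with $H\notin\sqrt{\gtq_i}$, one has $(HF)^k\in\gtq_i$ iff $F^k\in\gtq_i$, giving $\h(\gtq_i,\gta)\ge\h(\gtq_i)$ directly.

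The outline for the hard direction of (ii) has the right shape, but the cutoff construction as written has a genuine gap. You require $G_n\in\bigcap_{m\ne n}\gtq_{i_m}$ with $G_n\notin\sqrt{\gtq_{i_n}}$; however, if some $\gtp_{i_m}\subsetneq\gtp_{i_n}$ among the selected indices (nothing here rules out immersed components), then $\bigcap_{m\ne n}\gtq_{i_m}\subset\gtq_{i_m}\subset\gtp_{i_m}\subset\gtp_{i_n}$ and no such $G_n$ exists. A workable repair is to first exploit local finiteness to pass to a subsequence with $\ceros(\gtq_{i_n})\cap K_{n-1}=\varnothing$ for a compact exhaustion $\{K_m\}$, pick a discrete closed set of points $x_n\in\ceros(\gtq_{i_n})$, and then control only the germs of $F$ at the $x_n$ (via Lemma \ref{lemmaprimary}(i) and interpolation on the coherent sheaf $\sqrt{\gta}\,\an_X$) rather than the global residues modulo all $\gtq_{i_m}$. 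This is essentially what Forster does in \cite[\S5, Satz~9]{of}; your sketch captures the strategy but not the mechanism needed to sidestep the obstruction from nested primes.
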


To extend the Nullstellensatz to the non-closed case, we need the following characterisation of the saturation of an ideal. Namely,

\begin{deflem}\label{csaturation}
Let $(X,\an_X)$ be either a Stein space or a real-analytic space and $\gta$ an ideal of $\an(X)$. \em Define
\begin{equation*}
\begin{split}
{\mathfrak C}_1(\gta):=\{G\in\an(X): \forall \,K\subset X\text{ compact }&\ \exists\,H\in\an(X)\\
&\text{ such that }\ceros(H)\cap K=\varnothing\ \text{and}\ HG\in\gta\},\\
{\mathfrak C}_2(\gta):=\{G\in\an(X): \forall\,x\in X\,\exists\,H\in H^0(&X,\an_X)
\\
&\text{ such that }H(x)\neq0\ \text{and}\ HG\in\gta\}.
\end{split}
\end{equation*}\em
Then $\tilde{\gta}={\mathfrak C}_1(\gta)={\mathfrak C}_2(\gta)$.
\end{deflem}
\begin{proof}
As the chain of inclusions ${\mathfrak C}_1(\gta)\subset{\mathfrak C}_2(\gta)\subset\tilde{\gta}$ is clear, it only remains to check $\tilde{\gta}\subset{\mathfrak C}_1(\gta)$. 

We begin with the complex case. Let $K\subset X$ be a compact set. As $(X,\an_X)$ is a Stein space, we may assume that $K$ is holomorphically convex \cite[VII.A]{gr}. Since $\gta\an_X$ is a coherent sheaf, we deduce by Cartan's Theorem A \cite{c2} that there exists an open neighbourhood $\Omega$ of $K$ in $X$ and $A_1,\ldots,A_r\in\an(X)$ such that $\gta\an_{X,x}$ is generated as an $\an_{X,x}$-module by $A_{1,x},\ldots,A_{r,x}$ for all $x\in\Omega$. By \cite[\S2.Satz 3]{of} and Cartan's Theorem B the finitely generated ideal $\gtg:=(A_1,\ldots,A_r)\an(X)$ is saturated. By \cite[\S2.Satz 3]{of} the ideal 
$$
(\gtg:\tilde{\gta}):=\{H\in\an(X):\ H\tilde{\gta}\subset\gtg\}
$$
is saturated. Since $\gta\an_{X,x}=\tilde{\gta}\an_{X,x}$ for all $x\in X$, we deduce $(\gtg:\tilde{\gta})\an_{X,x}=\an_{X,x}$ for all $x\in\Omega$, that is, it is generated by $1$ at any point of $\Omega$. After shrinking $\Omega$, we may assume that it is an Oka--Weil neighbourhood of $K$ and that $H^0(W,(\gtg:\tilde{\gta})\an_X)=H^0(W,\an_X)$ (see \cite[VII.A.Prop.3 \& VIII.A.Prop.6]{gr}). By \cite[VIII.A.Thm.11]{gr} there exists a holomorphic function $H\in H^0(X,(\gtg:\tilde{\gta})\an_X)=(\gtg:\tilde{\gta})$ that is close to $1$ in $K$. 
Thus, $\ceros(H)\cap K=\varnothing$ and $H\tilde{\gta}\subset\gtg\subset\gta$. Therefore, we conclude $\tilde{\gta}\subset{\mathfrak C}_1(\gta)$. 

We consider the real case next. By \cite[Prop.2 \& 5]{c2} the sheaf of ideals $\gta\an_X$ can be extended to a coherent sheaf of ideals $\Fhaz$ on an open Stein neighbourhood $\Omega$ of $\R^n$ in $\C^n$. Hence the inclusion $\tilde{\gta}\subset{\mathfrak C}_1(\gta)$ follows similarly to the one of the complex case and we leave the concrete details to the reader. 
\end{proof}
\begin{remarks}\label{csaturationr}
Let $\gta\subset\gtb$ be ideals of $\an(X)$ and define ${\mathfrak R}_i(\gta):={\mathfrak C}_i(\sqrt{\gta})$ for $i=1,2$. Then 
\begin{itemize}
\item[(i)]${\mathfrak C}_i(\gta)\subset{\mathfrak C}_i(\gtb)$ and ${\mathfrak R}_i(\gta)\subset{\mathfrak R}_i(\gtb)$.
\item[(ii)] ${\mathfrak C}_i({\mathfrak C}_i(\gta))={\mathfrak C}_i(\gta)$ and ${\mathfrak R}_i({\mathfrak R}_i(\gta))={\mathfrak R}_i(\gta)$.
\end{itemize}
\end{remarks}

Now we are ready to prove Theorem \ref{perfect}.

\begin{proof}[Proof of Theorem \em \ref{perfect}]
Let us prove
$$
\ideal(\ceros(\gta))={\mathfrak R}_1(\gta)={\mathfrak R}_2(\gta)=\widetilde{\sqrt{\gta}}. 
$$ 
Clearly, ${\mathfrak R}_1(\gta)\subset{\mathfrak R}_2(\gta)\subset\widetilde{\sqrt{\gta}}\subset\ideal(\ceros(\gta))$. Thus, it remains to prove the inclusion
$$
\ideal(\ceros(\gta))\subset{\mathfrak R}_1(\gta).
$$

Assume first that $\gta$ is a closed ideal and let $K$ be a compact subset of $X$. Since $(X,\an_X)$ is a Stein space, we may assume that $K$ is holomorphically convex \cite[VII.A]{gr}. Let $\gta=\bigcap_{i\in I}\gtq_i$ be a normal primary decomposition of $\gta$. As $K$ is compact and $\{\gtq_i\}_{i\in I}$ is locally finite, the set $J:=\{i\in I:\ \ceros(\gtq_i)\cap K\neq\varnothing\}$ is finite. Let $\gta_1:=\bigcap_{i\in J}\gtq_i$ and $\gta_2:=\bigcap_{i\not\in J}\gtq_i$; clearly, $\gta=\gta_1\cap\gta_2$.

Since $K\subset X\setminus\bigcup_{i\not\in J}\ceros(\gtq_i)$ and $K$ is holomorphically convex, there exists by \cite[VII.A.Prop.3]{gr} an Oka--Weil neighbourhood $U$ of $K$ in $X\setminus\bigcup_{i\not\in J}\ceros(\gtq_i)$. By \cite[VIII.A.Thm.11]{gr} there exists a holomorphic function $H\in\gta_2=H^0(X,\gta_2\an_X)$ that is close to $1$ on $K$. On the other hand, since $\ideal(\ceros(\gtq_i))=\sqrt{\gtq}_i$ for all $i$ and there exists $m_i\geq 1$ such that $(\sqrt{\gtq}_i)^{m_i}\subset\gtp_i$ (see Theorem \ref{primary}), we find $m\geq 1$ such that $(\sqrt{\gta_1})^{m}\subset\gta_1$. Moreover, since $J$ is a finite set, we obtain
\begin{multline*}
\ideal(\ceros(\gta))=\ideal(\ceros(\gta_2\cap\gta_1))=\ideal\Big(\ceros(\gta_2\cap\bigcap_{i\in J}\gtq_i\Big)=\ideal(\ceros(\gta_2))\cap\bigcap_{i\in J}\ideal(\ceros(\gtq_i))\\
=\ideal(\ceros(\gta_2))\cap\bigcap_{i\in J}\sqrt{\gtq_i}=\ideal(\ceros(\gta_2))\cap\sqrt{\gta_1}.
\end{multline*}
If $G\in\ideal(\ceros(\gta))$, then $(HG)^{m}\in\gta_2\gta_1\subset\gta_2\cap\gta_1=\gta$, that is, $HG\in\sqrt{\gta}$ and so $\ideal(\ceros(\gta))\subset{\mathfrak R}_1(\gta)$.

For the general case, we proceed as follows. By Lemma \ref{csaturation} it holds
$$
\tilde{\gta}={\mathfrak C}_1(\gta)\subset{\mathfrak C}_1(\sqrt{\gta})={\mathfrak R}_1(\gta)
$$
and by Remarks \ref{csaturationr} we get
$$
\ideal(\ceros(\gta))=\ideal(\ceros(\tilde{\gta}))={\mathfrak R}_1(\tilde{\gta})\subset{\mathfrak R}_1({\mathfrak R}_1(\gta)) ={\mathfrak R}_1(\gta)=\widetilde {\sqrt{\gta}},
$$ 
as required.
\end{proof}

\begin{remark}\label{rperfect0}
If $\gtq$ is a primary ideal of $\an(X)$, then by Lemma \ref{lemmaprimary}
$$
\widetilde{\sqrt{\gtq}}=\begin{cases}\sqrt{\gtq}&\text{if $\gtq$ is saturated,}\\
H^0(X,\sqrt{\gtq}\an_X)&\text{otherwise.}
\end{cases}
$$
\end{remark}

\begin{examples}\label{rperfect}
(i) There are saturated ideals $\gta$ of $\an(X)$ whose radical $\sqrt{\gta}$ is not saturated. Consider the Stein space $(\C,\an_\C)$ and for each $k\geq1$ let $F,G\in\an(\C)$ be holomorphic functions whose respective zero-sets are $\N$ and such that ${\rm mult}_n(F)=n$ and ${\rm mult}_n(G)=1$ for all $n\in\N$. Observe that the ideal $\gta$ of $\an(\C)$ generated by $F$ is saturated because it is principal. However, its radical $\sqrt{\gta}$ is not saturated because $G\in\widetilde{\sqrt{\gta}}\setminus\sqrt{\gta}$.

(ii) Conversely, there are non-saturated ideals of $\an(X)$ whose radical $\sqrt{\gta}$ is saturated. Consider the Stein space $(\C,\an_\C)$ and for each $k\geq1$ let $F_k\in\an(\C)$ be a holomorphic function whose zero-set is $\N$ and such that 
$$
{\rm mult}_{n}(F_k):=\begin{cases}
1&\text{ if $n<k$,}\\
2&\text{ if $n\geq k$.}
\end{cases}
$$
Let $\gta$ be the ideal of $\an(\C)$ generated by the functions $F_k$. Let also $G\in\an(\C)$ be a holomorphic function whose zero-set is $\N$ and such that ${\rm mult}_{n}(G)=1$ for all $n\in\N$. Notice that $G^2=F_1\in\gta$ and $\sqrt{\gta}=G\an(\C)=\tilde{\gta} \neq\gta$.
\end{examples}

\section{The real Nullstellensatz in terms of \L ojasiewicz's radical}\label{s4}

We present some results relating \L ojasiewicz's radical to the real radical in the abstract setting (see also \cite{fg}).

\subsection{The real radical in the abstract setting}\label{as}

We begin by recalling some properties concerning classical Cauchy-Schwarz's inequality and Lagrange's equality. Cauchy-Schwarz's inequality says that in an Euclidean space $(E,\qq{\cdot,\cdot})$ it holds $|\qq{x,y}|\leq\|x\|\|y\|$ or equivalently $\qq{x,y}^2\leq\|x\|^2\|y\|^2$ for every couple of vectors $x,y\in E$. For $\R^n$ with its usual inner product we have
$$
(x_1y_1+\cdots+x_ny_n)^2\leq(x_1^2+\cdots+x_n^2)(y_1^2+\ldots+y_n^2)\quad\forall\, (x_1,\ldots,x_n),(y_1,\ldots,y_n)\in\R^n. 
$$
For instance, we can prove the previous inequality using the following polynomial identity in $\Z[\x,\y]:=\Z[\x_1,\ldots,\x_n;\y_1,\ldots,\y_n]$
\begin{multline*}
\Big(\sum_{i=1}^n\x_i^2\Big)\Big(\sum_{j=1}^n\y_j^2\Big)-\Big(\sum_{k=1}^n\x_k\y_k\Big)^2=\sum_{i,j=1}^n\x_i^2\y_j^2-\sum_{i,j=1}^n\x_i\y_i\x_j\y_j\\
=\sum_{\substack{i,j=1\\ i\neq j}}^n\x_i^2\y_j^2-2\sum_{\substack{i,j=1\\ i<j}}^n\x_i\y_i\x_j\y_j=\sum_{\substack{i,j=1\\ i<j}}^n(\x_i\y_j-\x_j\y_i)^2,\qquad{\rm (LE)}
\end{multline*}
which is known as Lagrange's equality. Thus, if $A$ is a (unitary commutative) ring and $a_1,\ldots,a_n$, $b_1,\ldots,b_n\in A$, it holds that
$$
\Big(\sum_{i=1}^na_i^2\Big)\Big(\sum_{j=1}^nb_j^2\Big)-\Big(\sum_{k=1}^na_kb_k\Big)^2=\sum_{\substack{i,j=1\\ i<j}}^n(a_ib_j-a_jb_i)^2\qquad{\rm (CS)}
$$
is a finite sum of squares. On the other hand, we say that an element $a\in A$ of a real ring $A$ is non-negative and we write $a\geq0$ if it belongs to all prime cones of $A$. We prove the following result, which presents the real radical in relation with \L ojasiewicz's inequality.
\begin{lem}\label{newpres}
Let $A$ be a real ring and $\gta$ an ideal of $A$. Then 
\begin{equation}\label{ri}
\sqrt[\mathsf{r}]{\gta}=\{a\in A:\ \exists\, b\in\gta,\ m\geq1 \text{ such that }\ b-a^{2m}\geq0\}.
\end{equation}
Moreover, if $\gta:=(f_1,\ldots,f_r)A$ and $f:=f_1^2+\cdots+f_r^2$, then
\begin{equation}\label{ri2}
\sqrt[\mathsf{r}]{\gta}=\{a\in A:\ \exists\, m\geq1,\ \sigma\in\Sigma A^2\ \text{ such that } \sigma f-a^{2m}\geq0\}.
\end{equation}
\end{lem}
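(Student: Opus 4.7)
The plan is to establish \eqref{ri} first and then to deduce \eqref{ri2} from it. For \eqref{ri}, the inclusion $\sqrt[\mathsf{r}]{\gta}$ into the right-hand side is immediate: if $a\in\sqrt[\mathsf{r}]{\gta}$, then by the very definition of the real radical there exist $m\geq 1$ and $a_1,\dots,a_p\in A$ with $b:=a^{2m}+\sum_{k=1}^pa_k^2\in\gta$, and then $b-a^{2m}=\sum_k a_k^2$ is a sum of squares, hence non-negative on every prime cone.

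For the reverse inclusion in \eqref{ri}, I would invoke the abstract real Nullstellensatz for commutative rings, which characterises $\sqrt[\mathsf{r}]{\gta}$ as the intersection $\bigcap_\alpha\supp(\alpha)$ taken over all prime cones $\alpha$ of $A$ with $\gta\subseteq\supp(\alpha)$. Given $a\in A$ and $b\in\gta$ with $b-a^{2m}\geq0$, fix any such $\alpha$ and reduce modulo $\supp(\alpha)$: the image of $b$ vanishes, so $-a^{2m}\geq0$ in the ordered residue domain $A/\supp(\alpha)$; combined with $a^{2m}=(a^m)^2\geq 0$, this forces $a^{2m}=0$, and since $A/\supp(\alpha)$ is a domain, $a\in\supp(\alpha)$. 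Because this holds for every such $\alpha$, we conclude $a\in\sqrt[\mathsf{r}]{\gta}$.

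For \eqref{ri2}, the inclusion $\supseteq$ is immediate from \eqref{ri}, because $f=\sum f_i^2\in\gta$ (each $f_i$ lies in $\gta$), hence $\sigma f\in\gta$ plays the role of $b$. For the inclusion $\subseteq$, start from $a\in\sqrt[\mathsf{r}]{\gta}$, apply \eqref{ri} to produce $b\in\gta$ and $m\geq1$ with $b-a^{2m}\geq0$, and write $b=\sum_{i=1}^r c_i f_i$. Setting $\sigma:=\sum_i c_i^2\in\Sigma A^2$, Lagrange's identity (CS) gives $\sigma f-b^2=\sum_{i<j}(c_if_j-c_jf_i)^2\geq 0$. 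Moreover $b\geq a^{2m}\geq 0$ implies $b+a^{2m}\geq 0$, whence $b^2-a^{4m}=(b-a^{2m})(b+a^{2m})\geq 0$ as a product of non-negative elements. Adding the two inequalities yields $\sigma f-a^{2(2m)}\geq 0$, which exhibits $a$ in the right-hand side of \eqref{ri2}.

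The main obstacle is the single non-routine ingredient used in paragraph two, namely the abstract real Nullstellensatz characterisation of $\sqrt[\mathsf{r}]{\gta}$ as the intersection of the supports of the prime cones containing $\gta$; once this is taken as known, all other steps reduce to straightforward manipulations in the ordered residue domains together with the identity (CS) from the preceding discussion.
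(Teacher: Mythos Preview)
Your proof is correct. The treatment of the easy inclusion in \eqref{ri} and of both inclusions in \eqref{ri2} matches the paper's argument essentially verbatim (the paper also writes $b=\sum g_if_i$, sets $\sigma=\sum g_i^2$, invokes (CS) to get $b^2\le \sigma f$, and combines with $b^2\ge a^{4\ell}$).

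The one genuine difference is in the hard inclusion of \eqref{ri}. You appeal to the abstract real Nullstellensatz, i.e.\ the description $\sqrt[\mathsf{r}]{\gta}=\bigcap\{\supp(\alpha):\alpha\in\mathrm{Sper}\,A,\ \gta\subset\supp(\alpha)\}$, and then argue pointwise in each ordered residue domain. The paper instead invokes the abstract Positivstellensatz \cite[4.4.1]{bcr}: from $b-a^{2m}\ge 0$ it extracts sums of squares $\sigma_1,\sigma_2$ and an exponent $\ell$ with $\sigma_1+(a^{2m}-b)\sigma_2+(a^{2m}-b)^{2\ell}=0$, rewrites this as $(a^{2m}-b)^{2\ell}+\sigma_1+a^{2m}\sigma_2=b\sigma_2\in\gta$, deduces $a^{2m}-b\in\sqrt[\mathsf{r}]{\gta}$, and concludes $a\in\sqrt[\mathsf{r}]{\gta}$ since $b\in\gta$ and $\sqrt[\mathsf{r}]{\gta}$ is radical. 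Both arguments rest on standard real-algebra machinery of comparable depth; yours is shorter and more conceptual, while the paper's route produces an explicit algebraic certificate witnessing membership in the real radical.
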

\begin{proof}
Denote the set on the right hand side of equality \eqref{ri} with $\gtb$ and let us check $\sqrt[\mathsf{r}]{\gta}=\gtb$. Take $a\in\sqrt[\mathsf{r}]{\gta}$. There exist $a_1,\ldots,a_r\in A$ and $m\geq1$ such that 
$$
a^{2m}\leq a^{2m}+\sum_{i=1}^ra_i^2=:b\in\gta; 
$$
hence, $a\in\gtb$.

Conversely, take now $a\in\gtb$ and let $b\in\gta$ and $m\geq1$ be such that $b-a^{2m}\geq0$. Observe that there does not exist a prime cone $\alpha$ in $A$ such that $-b+a^{2m}\in\gta$ and $b-a^{2m}\not\in\supp(\alpha)$. Thus, by the abstract Positivstellensatz \cite[4.4.1]{bcr} there exist sums of squares $\sigma_1,\sigma_2$ in $A$ and a positive integer $\ell\geq 1$ such that $\sigma_1+(-b+a^{2m})\sigma_2+(-b+a^{2m})^{2\ell}=0$. Therefore
$$
(-b+a^{2m})^{2\ell}+\sigma_1+a^{2m}\sigma_2=b\sigma_2\in\gta;
$$
hence, $-b+a^{2m}\in\sqrt[\mathsf{r}]{\gta}$. As $b\in\gta\subset\sqrt[\mathsf{r}]{\gta}$ and the latter is a radical ideal, we conclude $a\in\sqrt[\mathsf{r}]{\gta}$, as required.

If $\gta=(f_1,\ldots,f_r)A$, it is clear that the set on the right hand side of equality \eqref{ri2} is contained in $\sqrt[\mathsf{r}]{\gta}$. Conversely, let $a\in\sqrt[\mathsf{r}]{\gta}$. There exist $b\in\gta$ and $\ell\geq1$ such that $b-a^{2\ell}\geq0$. Since $b\in\gta$, there exist $g_1,\ldots,g_r\in A$ such that $b=g_1f_1+\cdots g_rf_r$. By \ref{as}(CS) we get $b^2\leq f\sigma$ where $\sigma=g_1^2+\cdots+g_r^2\in\Sigma A^2$. On the other hand, since $b-a^{2\ell}\geq0$, we have
$$
b+a^{2\ell}=(b-a^{2\ell})+2a^{2\ell}\geq0,\quad\text{so}\quad b^2-a^{4\ell}=(b+a^{2\ell})(b-a^{2\ell})\geq0;
$$
hence, if we write $m:=2\ell$, we obtain $f\sigma-a^{2m}=(f\sigma-b^2)+(b^2-a^{2m})\geq0$, as required.
\end{proof}

\subsection{\L ojasiewicz's inequality and the real radical}

Recall that in the polynomial case and in the local analytic setting Artin-Lang's Theorem relates the abstract positivity of an element in the corresponding ring with its geometric positivity. More precisely,

\subsubsection{Polynomial case} 
Let $R$ be a real closed field and $X\subset R^n$ an algebraic set. Denote the \em ring of polynomial functions on $X$ \em with $R[X]:=R[\x]/\ideal(X)$ where $R[\x]:=R[\x_1,\ldots,\x_n]$ and $\ideal(X)=\{g\in R[\x]:\ g(x)=0\ \forall\,x\in X\}$. An element $f\in R[X]$ is $\geq0$ if and only if $f(x)\geq0$ for all $x\in X$. 

\subsubsection{Local analytic case} 
Let $\an_n:=\R\{\x\}:=\R\{\x_1,\ldots,\x_n\}$ and $X_a\subset\R^n_a$ be an analytic germ at a point $a\in\R^n$. Denote the \em ring of analytic function germs on $X_a$ \em with $\an(X_a):=\R\{\x-a\}/\ideal(X_a)$ where $\ideal(X_a):=\{g_a\in\R\{\x-a\}: X_a\subset\ceros(g_a)\}$. An element $f_a\in\an(X_a)$ is $\geq0$ if and only if there exist representatives $X$ of $X_a$ and $f$ of $f_a$ defined on $X$ such that $f(x)\geq0$ for all $x\in X$.

\vspace{2mm} 
We recall the well-known real Nullstellens\"atze in terms of the real radical. 

\begin{thm}[Real Nullstellens\"atze]
Let $A$ denote either $R[X]$ for an algebraic set $X$ or $\an(X_a)$ for an analytic germ $X_a\subset\R^n_a$ and let $\gta$ be an ideal of $A$. Then $\ideal(\ceros(\gta))=\sqrt[\mathsf{r}]{\gta}$.
\end{thm}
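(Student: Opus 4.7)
The plan is to prove both inclusions: the easy one directly, and the reverse one via the abstract real Nullstellensatz combined with a transfer of positivity from the abstract to the geometric setting. The inclusion $\sqrt[\mathsf{r}]{\gta} \subseteq \ideal(\ceros(\gta))$ is immediate: if $f \in \sqrt[\mathsf{r}]{\gta}$ then $f^{2m} + \sum_{k=1}^{p} a_k^2 =: b \in \gta$ for some $a_k \in A$ and $m \geq 1$; evaluating at any $x \in \ceros(\gta)$ makes the nonnegative expression $f(x)^{2m} + \sum_{k} a_k(x)^2$ vanish, forcing $f(x) = 0$.

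For the reverse inclusion I would argue by contraposition. If $f \notin \sqrt[\mathsf{r}]{\gta}$, the abstract real Nullstellensatz (the consequence of the abstract Positivstellensatz already invoked in the proof of Lemma \ref{newpres}) produces a prime cone $\alpha$ of $A$ with $\gta \subseteq \supp(\alpha)$ and $f \notin \supp(\alpha)$. The remaining task is to \emph{geometrize} $\alpha$: to exhibit an honest point $x$ (of $X$ in the polynomial case, of a representative of $X_a$ in the local case) with $x \in \ceros(\gta)$ and $f(x) \neq 0$, contradicting $f \in \ideal(\ceros(\gta))$. In the polynomial case over a real closed field $R$, the Artin--Lang homomorphism theorem supplies an $R$-algebra homomorphism $R[X] \to R$ compatible with $\alpha$, whose kernel is the sought-for point. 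In the local-analytic case the analogue is Ruiz's analytic Artin--Lang theorem, which realizes $\alpha$ on a semianalytic half-branch emanating from $a$ in a representative, yielding the required geometric point on that branch.

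An alternative route goes through the equivalent description of $\sqrt[\mathsf{r}]{\gta}$ in Lemma \ref{newpres}: both rings are noetherian, so $\gta = (f_1, \ldots, f_r)A$ and $g := f_1^2 + \cdots + f_r^2 \in \gta$ satisfies $\ceros(g) = \ceros(\gta)$; \L{}ojasiewicz's inequality (with a growth-controlling weight such as $(1 + \|x\|^2)^N$ in the polynomial case, a unit on a small representative in the germ case) then furnishes $m \geq 1$ and $\sigma \in \Sigma A^2$ with $\sigma g - f^{2m} \geq 0$ pointwise, and Artin--Lang/Ruiz upgrades this to abstract nonnegativity so that Lemma \ref{newpres} concludes $f \in \sqrt[\mathsf{r}]{\gta}$. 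In either route the substantive obstacle is the transfer from abstract to geometric positivity: over a real closed field it is the classical Artin--Lang theorem and is essentially routine, but in the local-analytic setting it rests on Ruiz's considerably more delicate analytic Artin--Lang theorem, whose proof invokes substitution arguments and faithful flatness of the analytic tensor product.
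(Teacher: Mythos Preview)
The paper does not prove this theorem; it is explicitly introduced with the words ``We recall the well-known real Nullstellens\"atze in terms of the real radical'', and no argument is given. So there is no proof in the paper to compare against.

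Your sketch is a correct outline of the standard argument. The easy inclusion is immediate, and for the hard one the abstract real Nullstellensatz combined with Artin--Lang (polynomial case, as in \cite{bcr}) or its analytic analogue due to Risler and Ruiz (germ case) is precisely how the result is classically established. Your alternative route through \L ojasiewicz's inequality and Lemma~\ref{newpres} is in fact what the material surrounding the theorem---the Artin--Lang paragraphs immediately preceding it and Corollary~\ref{pol} immediately following it---is arranged to support; so although the paper writes out no proof, that second approach is the one most in keeping with its exposition.
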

 
Now we use \L ojasiewicz's inequality in order to prove that in the ring of polynomials and the ring of germs the real radical coincides with \L ojasiewicz's radical. Since in the algebraic and the local analytic cases the geometric objects can be represented as the zero-set of a single positive semidefinite equation, it is enough to consider the cases $X:=R^n$ and $X_a:=\R^n_0$.

\begin{lem}[\L ojasiewicz's inequality]
Let $A$ denote either $R[\x]$ or $\an_n$ and let $f,g\in A$ be such that $\ceros(f)\subset\ceros(g)$. Then there exist integers $m,\ell\geq0$ and a constant $C>0$ such that $g^{2m}\leq C(1+\|\x\|^2)^\ell|f|$. In particular, if $A=\an_n$, we may take $\ell=0$.
\end{lem}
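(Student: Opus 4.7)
The plan is to dispatch the two cases separately, as the analytic case is the classical Łojasiewicz inequality and only the polynomial case genuinely requires the growth factor $(1+\|\x\|^2)^\ell$.

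For the local analytic case ($A=\an_n$), I would simply invoke the classical Łojasiewicz inequality. Choose representatives of $f$ and $g$ on a compact neighborhood $K$ of the origin with $\ceros(f)\cap K\subset\ceros(g)$. The classical statement furnishes $\alpha>0$ and $C>0$ such that $|g(x)|^\alpha\leq C|f(x)|$ for every $x\in K$. Selecting any integer $m\geq1$ with $2m\geq\alpha$ and using that $g$ is bounded on $K$, we may upgrade the exponent: $|g(x)|^{2m}\leq\|g\|_K^{2m-\alpha}\cdot|g(x)|^\alpha\leq C'|f(x)|$ on $K$. Reading this as an inequality between germs yields $g^{2m}\leq C'|f|$ in $\an_n$, so $\ell=0$ works.

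For the polynomial case ($A=R[\x]$), I would use the abstract Positivstellensatz exactly as in the proof of Lemma \ref{newpres}. The inclusion $\ceros(f)\subset\ceros(g)$ is equivalent to the semialgebraic emptiness of the system $\{f=0,\,g^2>0\}$, or in prime-cone language: no $\alpha\in\Sper(A)$ satisfies $f\in\supp(\alpha)$ and $g\notin\supp(\alpha)$. By \cite[4.4.1]{bcr} there exist $\sigma\in\Sigma R[\x]^2$, a polynomial $h\in R[\x]$, and an integer $N\geq1$ with
$$
g^{2N}+\sigma=hf.
$$
Since $g^{2N}+\sigma\geq0$ pointwise on $R^n$, the product $hf$ is non-negative everywhere, and therefore
$$
g^{2N}\leq g^{2N}+\sigma = hf = |h|\cdot|f|.
$$
Finally, any polynomial $h\in R[\x]$ admits an elementary bound $|h(x)|\leq C(1+\|x\|^2)^\ell$, where one may take $\ell:=\lceil\deg h/2\rceil$ and $C$ depending on the coefficients of $h$. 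Putting $m:=N$, this gives the required inequality $g^{2m}\leq C(1+\|\x\|^2)^\ell |f|$.

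The only delicate point is extracting precisely the Positivstellensatz identity $g^{2N}+\sigma=hf$; but this is a direct specialisation of the instance already deployed in Lemma \ref{newpres} (the roles of $-b+a^{2m}$ and $b$ being played respectively by $g^2$ and $f$). The remaining verification is routine: the sign of $hf$ is forced by the identity, and the polynomial majorisation $|h|\leq C(1+\|\x\|^2)^\ell$ is a standard estimate. No additional tools beyond those already invoked in Section \ref{s4} are required.
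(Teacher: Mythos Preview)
Your argument is correct. Note, however, that the paper does not actually supply a proof of this lemma: it simply refers to \cite[2.6.2 \& 2.6.6]{bcr} for the polynomial case and to \cite[6.4]{bm} for the local analytic case.

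For the analytic case your treatment coincides with the cited classical result: invoke the local \L ojasiewicz inequality on a compact neighbourhood and round the exponent up to an even integer using boundedness of $g$.

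For the polynomial case your route genuinely differs from the references. The results \cite[2.6.2 \& 2.6.6]{bcr} are H\"ormander--\L ojasiewicz type inequalities established by semialgebraic methods (triviality of semialgebraic families, growth dichotomies), and they output the bound directly. You instead extract the identity $g^{2N}+\sigma=hf$ from the Positivstellensatz \cite[4.4.1]{bcr} (equivalently, from $g\in\sqrt[\mathsf{r}]{(f)}=\ideal(\ceros(f))$), and then majorise the single polynomial $h$ by $C(1+\|\x\|^2)^\ell$. This is neat because it reuses exactly the abstract machinery already invoked in Lemma~\ref{newpres}, works uniformly over any real closed field $R$, and makes the appearance of the growth factor transparent (it comes from the unspecified multiplier $h$, not from any analytic estimate). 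One small remark: your phrase ``in prime-cone language: no $\alpha\in\Sper(A)$ satisfies $f\in\supp(\alpha)$ and $g\notin\supp(\alpha)$'' is correct but tacitly uses the equivalence between the geometric and abstract formulations; since \cite[4.4.1]{bcr} is the abstract version, you could alternatively cite the geometric Positivstellensatz (e.g.\ \cite[4.4.2]{bcr}) and skip the prime-cone sentence entirely.
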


For the proof of \L ojasiewicz's inequality in the polynomial case use \cite[2.6.2 \& 2.6.6]{bcr}. For the local analytic case we refer the reader to \cite[6.4]{bm}. As a straightforward consequence of \L ojasiewicz's inequality we obtain the following descriptions of the real radical in the geometric settings we are considering. Namely,

\begin{cor}\label{pol}
Let $A$ denote either $R[X]$ for an algebraic set $X$ or $\an(X_a)$ for an analytic germ $X_a\subset\R^n_a$. Let $\gta$ be an ideal of $A$ and $f\in A$ a positive semidefinite element such that $\ceros(f)=\ceros(\gta)$. Then
$$
\ideal(\ceros(\gta))=\{g\in A:\ \exists\, m,\ell\geq0, C>0\ \text{ such that }\ C(1+\|\x\|^2)^\ell f-g^{2m}\geq0\}.
$$
In particular, if $A=\an(X_a)$, we may take $\ell=0$.
\end{cor}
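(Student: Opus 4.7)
Denote by $B$ the set on the right hand side of the claimed equality. The plan is to establish the two inclusions separately, with the non-trivial one being a direct application of \L ojasiewicz's inequality combined with the hypothesis that $f$ is positive semidefinite.

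For the inclusion $B\subset\ideal(\ceros(\gta))$, take $g\in B$ and fix integers $m,\ell\geq 0$ and a constant $C>0$ with $C(1+\|\x\|^2)^\ell f-g^{2m}\geq 0$. Evaluating at any point $x\in\ceros(\gta)=\ceros(f)$ yields $-g(x)^{2m}\geq 0$, so $g(x)=0$. This uses the interpretation of $\geq 0$ in the respective geometric setting (polynomial or local analytic germ), as recalled just before the statement. For the local analytic case, one reads this off on a sufficiently small representative of the germ where both $f$ and the combination $C(1+\|\x\|^2)^\ell f-g^{2m}$ are defined and non-negative.

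For the converse inclusion $\ideal(\ceros(\gta))\subset B$, take $g\in\ideal(\ceros(\gta))$. Since $\ceros(\gta)=\ceros(f)$, we have $\ceros(f)\subset\ceros(g)$. Apply \L ojasiewicz's inequality (the preceding lemma) to the pair $(f,g)$: there exist integers $m,\ell\geq 0$ and a constant $C>0$ such that
$$
g^{2m}\leq C(1+\|\x\|^2)^\ell|f|.
$$
Now comes the key use of the hypothesis that $f$ is positive semidefinite: since $f\geq 0$ one has $|f|=f$, and therefore
$$
C(1+\|\x\|^2)^\ell f-g^{2m}\geq 0,
$$
so $g\in B$. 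For the ``in particular'' statement, observe that \L ojasiewicz's inequality in the local analytic case allows the choice $\ell=0$, so the same argument forces $\ell=0$ in the description of $B$ when $A=\an(X_a)$.

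This is almost mechanical once \L ojasiewicz's inequality is in hand; the only substantive observation is the replacement of $|f|$ by $f$, which is precisely what the positive semidefiniteness hypothesis on $f$ gives us. No obstacle is anticipated.
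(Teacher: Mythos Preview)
Your proof is correct and is exactly the argument the paper intends: the corollary is stated there without proof as ``a straightforward consequence of \L ojasiewicz's inequality,'' and you have spelled out precisely that consequence, including the key observation that $|f|=f$ by positive semidefiniteness and the specialisation $\ell=0$ in the local analytic case.
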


\section{Real Nullstellensatz in the real-analytic setting}\label{s5} 

Let $X\subset \R^n$ be a $C$-analytic set endowed with its sheaf $\an_X$ and let $\gta\subset\an(X)$ be an ideal. If $\gta$ is finitely generated by $f_1,\ldots,f_r\in\gta$, we have seen in Lemma \ref{newpres} how to manage the function $f:=\sum_{i=1}^rf_i^2$ in the definition of \L ojasiewicz's radical, see equation \eqref{eq1}. The following result provides an analogous tool for the case when $\gta$ is not finitely generated.

\begin{lem}[Crespina Lemma]\label{crespina}
Let $\gta$ be an ideal of $\an(\R^n)$. Then there exists $f\in\tilde{\gta}$ such that 
\begin{itemize}
\item[(i)] $f$ is an infinite sum of squares in $\an(\R^n)$ and $\ceros(f)=\ceros(\gta)$.
\item[(ii)] For each $g\in\tilde{\gta}$ there exists a unit $u\in\an(\R^n)$ such that $g^2\leq fu$.
\end{itemize}
\end{lem}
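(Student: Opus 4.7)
The plan is to construct $f$ as an infinite weighted sum of squares drawn from a countable generating family of the sheaf $\gta\an_{\R^n}$, and then extract the unit $u$ in (ii) via local Cauchy--Schwarz estimates that are upgraded globally by an analytic approximation argument.

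First I would choose a countable family $\{g_k\}_{k\geq 1}\subset\tilde{\gta}$ whose stalks generate $\gta\an_{\R^n,x}$ at every $x\in\R^n$. This is possible because $\gta\an_{\R^n}$ extends to a coherent ideal sheaf $\Fhaz$ on an invariant Stein neighbourhood $\Omega\subset\C^n$ of $\R^n$ (cf.~\cite{c2}), and Cartan's Theorem~A combined with the second-countability of $\Omega$ supplies countably many global sections of $\Fhaz$ whose stalks generate $\Fhaz$; replacing each by its real part we may assume the $G_k\in H^0(\Omega,\Fhaz)$ are invariant extensions of sections $g_k\in\tilde{\gta}$. Fixing an invariant compact exhaustion $\{K_m\}_{m\geq 1}$ of $\Omega$ with Fr\'echet seminorms $\|\cdot\|_m:=\sup_{K_m}|\cdot|$, pick positive reals $c_k$ decreasing so rapidly that $\sum_{k\geq 1}c_k\|G_k\|_m^2<\infty$ for every $m$. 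Then $F:=\sum_{k\geq 1}c_k G_k^2$ converges absolutely in $H^0(\Omega,\an_\Omega)$ to an invariant holomorphic function; since the partial sums lie in $H^0(\Omega,\Fhaz)$ and $H^0(\Omega,\Fhaz)$ is a closed (saturated) ideal of $H^0(\Omega,\an_\Omega)$ in the complex case, the limit $F$ lies there too. Restricting to $\R^n$ yields $f:=F|_{\R^n}=\sum_{k\geq 1}c_k g_k^2\in H^0(\R^n,\gta\an_{\R^n})=\tilde{\gta}$, which is an infinite sum of squares in the sense of \S\ref{s2}. Finally $\ceros(f)=\bigcap_{k\geq 1}\ceros(g_k)=\ceros(\tilde{\gta})=\ceros(\gta)$, proving (i).

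For (ii), fix $g\in\tilde{\gta}$. At every $x\in\R^n$ finitely many of the $g_k$, say $g_{k_1},\ldots,g_{k_{N(x)}}$, generate $\gta\an_{\R^n,x}$, so in a relatively compact neighbourhood $U_x$ of $x$ there exist analytic functions $a_1,\ldots,a_{N(x)}\in\an(U_x)$ with $g=\sum_{j}a_j g_{k_j}$. The identity (CS) of \S\ref{s4} applied on $U_x$ gives
\[
g^2\;\leq\;N(x)\Big(\sum_{j=1}^{N(x)}a_j^2\Big)\Big(\sum_{j=1}^{N(x)}g_{k_j}^2\Big)\;\leq\;C_x\,f,
\]
where $C_x:=N(x)\bigl(\sup_{U_x}\sum_j a_j^2\bigr)/\min_j c_{k_j}$ is finite. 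Choosing a countable locally finite refinement $\{V_i\}$ of $\{U_x\}_{x\in\R^n}$ with associated constants $C_i$ and a subordinate continuous partition of unity $\{\psi_i\}$ on $\R^n$, the function $\phi:=\sum_i C_i\psi_i$ is continuous and strictly positive on $\R^n$, and $g^2\leq\phi f$ globally.

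The final step replaces $\phi$ by an analytic majorant that is a unit of $\an(\R^n)$. By the classical approximation of continuous functions on $\R^n$ by analytic functions within arbitrarily small positive continuous tolerance (Grauert--Whitney), there exists $u\in\an(\R^n)$ with $\phi<u<2\phi+2$. Then $u$ is everywhere strictly positive, hence a unit in $\an(\R^n)$, and $g^2\leq\phi f<uf$, establishing (ii). The hard part I anticipate is the first step: arranging the countable family $\{g_k\}$ so that every stalk of $\gta\an_{\R^n}$ is generated by finitely many of them (requiring coherence of the complexified sheaf plus Cartan's Theorem~A and paracompactness), and tuning the weights $c_k$ so that $\sum c_k G_k^2$ converges simultaneously in every Fr\'echet seminorm on $\Omega$ while still stabilising inside the saturated ideal $H^0(\Omega,\Fhaz)$. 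Once $f$ is in place, both (i) and (ii) follow from local Cauchy--Schwarz and sheaf-theoretic bookkeeping.
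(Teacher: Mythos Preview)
Your argument is correct and follows essentially the same route as the paper: extend $\gta\an_{\R^n}$ to a coherent invariant sheaf on a Stein neighbourhood, use Cartan's Theorem~A to produce a countable generating family of invariant sections, form a weighted sum of their squares with weights decaying fast enough for convergence in every seminorm, and use closedness of the section ideal to land in $\tilde{\gta}$; for (ii) both proofs apply the Lagrange/Cauchy--Schwarz identity locally and then manufacture a strictly positive analytic majorant of the resulting local constants. The only cosmetic differences are that the paper bounds $g^2\leq M_K f$ directly on each compact of an exhaustion and then invokes the existence of a positive analytic $u$ dominating the step function $M_{K_m}$ on $K_m\setminus K_{m-1}$, whereas you glue the local constants via a partition of unity into a continuous $\phi$ before applying Whitney approximation, and your factor $N(x)$ in the Cauchy--Schwarz estimate is superfluous (the identity (CS) already gives $g^2\leq(\sum a_j^2)(\sum g_{k_j}^2)$ without it).
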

\begin{proof}
By \cite[Prop.2 \& 5]{c2} the sheaf of ideals $\gta\an_{\R^n}$ can be extended to a coherent invariant sheaf of ideals $\Jhaz$ on an invariant open Stein neighbourhood $\Omega$ of $\R^n$ in $\C^n$. Let $\{L_\ell\}_{\ell\geq1}$ be an exhaustion of $\Omega$ by compact sets. Since $\Jhaz$ is invariant and coherent, we deduce by Cartan's Theorem A that there exists a countable collection of invariant holomorphic sections $\{G_j\}_{j\geq1}\subset H^0(\Omega,\Jhaz)$ such that for each $\ell\geq1$ there exists $j(\ell)$ such that for each $z\in L_\ell$ the germs $G_{1,z},\ldots,G_{j(\ell),z}$ generate the ideal $\Jhaz_z$. 

For $k\geq1$ define $\mu_k:=\max_{L_k}\{|G_k|^2\}+1$ and $\gamma_k:=1/\sqrt{2^k\mu_k}$. Consider the series $F:=\sum_{k\geq1}\gamma_k^2G_k^{2}$, which converges uniformly on the compact subsets of $\Omega$. Indeed, let $L\subset\Omega$ be a compact set and observe that there exists an index $k_0\geq1$ such that $L\subset L_k$ for all $k\geq k_0$. Moreover, for each $z\in L$ we have $\gamma_k^2|G_k(z)|^{2}\leq\frac{1}{2^k}$ if $k\geq k_0$, so
$$
\Big|\sum_{k\geq k_0}\gamma_k^2G_k^2(z)\Big|\leq\sum_{k\geq k_0}\gamma_k^2|G_k(z)|^2\leq\sum_{k\geq k_0}\frac{1}{2^k}\leq 1
$$ 
for each $z\in L$. Denote $S_m:=\sum_{k=1}^m\gamma_k^2G_k^{2}\in H^0(\Omega,\Jhaz)$; hence, $F:=\sum_{k\geq1}\gamma_k^2G_k^{2}=\lim_{m\to\infty}S_m$ in the Fr\'{e}chet topology of $H^0(\Omega,\Jhaz)$. As $H^0(\Omega,\Jhaz)$ is a closed ideal of $H^0(\Omega,\an_\Omega)$ by \cite[VIII.Thm.4, pag.60]{c1}, we conclude $F\in H^0(\Omega,\Jhaz)$, so $f:=F|_{\R^n}\in\tilde{\gta}$. For each $k\geq1$ denote $f_k:=(\gamma_kG_k)|_X$ and write $f=\sum_{k\geq1}f_k^2$. It holds $\ceros(f)=\ceros(\gta)$. Indeed,
$$
\ceros(f)=\bigcap_{k\geq1}\ceros(f_k)=\bigcap_{k\geq1}(\ceros(G_k)\cap\R^n)=\Big(\bigcap_{k\geq1}\ceros(G_k)\Big)\cap\R^n=\supp(\Jhaz)\cap\R^n=\ceros(\gta).
$$

Now let $g\in\tilde{\gta}$ and $x\in\R^n$. By the choice of the $G_k$'s and since $g_x\in\gta\an_{\R^n,x}$, there exist $a_{1,x},\ldots,a_{r,x}\in\an_{\R^n,x}$ ($r$ depends on $x$) such that $g_x=a_{1,x}f_{1,x}+\cdots+a_{r,x}f_{r,x}$. Thus, by \ref{as}(CS)
$$
g_x^2\leq\Big(\sum_{i=1}^rf_i^2\Big)_x\Big(\sum_{i=1}^ra_{i,x}^2\Big)\leq f_xM_x
$$
where $M_x$ is a positive real number such that $\sum_{i=1}^ra_{i,x}^2\leq M_x$. 

Next, pick a compact set $K\subset\R^n$ and choose a constant $M_K>0$ such that $g^2|_K\leq f|_KM_K$. Fix an exhaustion $\{K_m\}_{m\geq1}$ of $X$ by compact sets and let $u\in\an(\R^n)$ be a strictly positive analytic function such that $M_{K_m}\leq u|_{K_m\setminus K_{m-1}}$ for all $m\geq1$. Then $g^2\leq fu$, as required.
\end{proof}
\begin{remark}
Observe that in general $f\in\tilde{\gta} \setminus\gta$. Indeed, let $\gta\subset\an(X)$ be a proper ideal such that $\ceros(\gta)=\varnothing$ (see for instance Example \ref{one} in the Introduction). Then there does not exist any $f\in\gta$ such that $\ceros(f)=\ceros(\gta)$ because otherwise $\gta=\an(X)$.
\end{remark}

\begin{prop}\label{Loj} 
Let $X$ be a $C$-analytic set in $\R^n$ and $f,g\in\an(X)$ such that $\ceros(f)\subset\ceros(g)$. Let $K\subset X$ be a compact set. Then there exist an integer $m\geq1$
and an analytic function $h\in\an(X)$ such that $|h|<1$, $\ceros(h)\cap K=\varnothing$ and $|f|\geq(hg)^{2m}$.
\end{prop}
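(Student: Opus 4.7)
The overall strategy is to apply the local real-analytic \L ojasiewicz inequality on a compact neighbourhood of $K$ in $X$ to fix a single exponent $m$, and then to construct a global analytic correction $h\in\an(X)$ of the form $h:=c\cdot v$, with $c$ a small positive constant and $v$ an analytic factor that decays sufficiently fast outside a neighbourhood of $K$ so that $|hg|^{2m}\leq|f|$ holds globally, all while keeping $|h|<1$ and $h$ non-vanishing on $K$.

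First I would choose a compact neighbourhood $L\subset X$ of $K$ with $K\subset\Int(L)$. The hypothesis $\ceros(f)\subset\ceros(g)$ combined with the local \L ojasiewicz inequality for real-analytic function germs at each point of $L$ and the compactness of $L$ yields an integer $m\geq1$ and a constant $C>0$ with $|g|^{2m}\leq C|f|$ on $L$. Setting $c:=(2C)^{-1/(2m)}\in(0,1)$, the constant function $c$ already satisfies $(cg)^{2m}\leq|f|/2$ on $L$, so the task is reduced to correcting $c$ off $L$.

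Next I would fix an exhaustion $\{L_j\}_{j\geq1}$ of $X$ by compact sets with $L\subset\Int(L_1)$ and $L_j\subset\Int(L_{j+1})$. The local \L ojasiewicz inequality on each $L_j$ furnishes $(m_j,C_j)$ with $|g|^{2m_j}\leq C_j|f|$ on $L_j$, and whenever $m_j>m$ the ratio $|g|^{2m}/|f|$ may become unbounded as one approaches $\ceros(f)\cap L_j$ from outside $L$; hence the analytic factor $v$ must decay correspondingly. I would construct $v\in\an(X)$ with $|v|\leq1$ on $X$, $v$ bounded away from zero on $K$, and $v^{2m}\leq|f|/(c^{2m}|g|^{2m})$ on each shell $L_{j+1}\setminus L_j$ wherever $g\neq0$. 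The construction extends $f,g$ to holomorphic sections $F,G$ on an invariant open Stein neighbourhood $\Omega$ of the complexification $\widetilde{X}$ of $X$ (via coherence) and, in the spirit of the proof of the Crespina Lemma \ref{crespina}, builds $v$ as the restriction to $X$ of a holomorphic function produced by a rapidly convergent infinite product of factors, each chosen via Cartan Theorems A and B and Oka--Weil approximation to be close to $1$ on an Oka--Weil neighbourhood of $K$ and dominated by any prescribed continuous positive bound on the corresponding shell. Setting $h:=c\cdot v$ then produces an analytic function of modulus $<1$, non-vanishing on $K$, with $|hg|^{2m}\leq|f|$ at every point of $X$: the inequality is trivial where $g=0$ and forced by the construction of $v$ where $g\neq0$.

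The hard part will be the construction of $v$. The global \L ojasiewicz exponent on $X$ need not be bounded, so $v$ is required to exhibit arbitrarily fast prescribed decay along the far shells of the exhaustion while remaining a single global real-analytic function. This cannot be achieved by a constant or by a naive local patching and forces the use of Stein-theoretic tools on the complexification $\widetilde{X}$---coherent extension, Cartan's Theorems A and B, Oka--Weil approximation, and convergent series/product constructions---entirely parallel to the techniques already deployed in the proof of the Crespina Lemma.
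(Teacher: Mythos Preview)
Your plan has a genuine gap at the construction of $v$. You want $v^{2m}\leq |f|/(c^{2m}|g|^{2m})$ on each shell $L_{j+1}\setminus L_j$ (where $g\neq0$), and you describe this as producing factors ``dominated by any prescribed continuous positive bound on the corresponding shell.'' But the function $|f|/|g|^{2m}$ does \emph{not} extend to a continuous positive function on such a shell: at any point $y\in\ceros(f)$ where the local \L ojasiewicz exponent of $g$ relative to $f$ exceeds $m$, one has $\liminf_{x\to y}|f(x)|/|g(x)|^{2m}=0$. Consequently no analytic (or even continuous) $v$ that stays positive on the shell can satisfy the required bound there; $v$ must actually \emph{vanish} at every such bad point. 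Oka--Weil approximation and Crespina-type infinite products let you build analytic functions obeying prescribed positive continuous upper bounds, but they cannot force zeros at a prescribed set unless that set has first been identified as $C$-analytic---and your outline never does this.

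This is exactly the difficulty the paper's proof isolates. It invokes \cite[Cor.~2.3]{abs} to obtain a proper $C$-analytic subset $Y_1\subset\ceros(f)$ with $K\cap Y_1=\varnothing$ and a single exponent $m$ for which $g^{2m}<|f|$ on a neighbourhood of $\ceros(f)\setminus Y_1$; the analyticity of $Y_1$ is a non-trivial theorem. It then invokes \cite[Thm.~2.5]{abs} to produce a positive semidefinite analytic equation $h_0$ of $Y_1$ satisfying $h_0<|f|$ on the closure of the failure set $\{f^2<g^{4m}\}$ and $h_0<1$ on $X$, and takes $h:=h_0/(1+g^2)$. The substance lies in these two external results; your Stein-theoretic toolkit, by itself, does not supply either the analyticity of the bad locus or an equation of it with the needed inequality on a global semianalytic set.
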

\begin{proof}
The proof of this result is contained in \cite{abs}, so we sketch the proof referring to the concrete statements in \cite{abs}. By \cite[Cor. 2.3]{abs} there exist a proper $C$-analytic subset $Y_1\subset Y:=\ceros(f)$ such that $K\cap Y_1=\emptyset$, an integer $m$ and an open neighbourhood $U$ of $Y\setminus Y_1$ contained in $X\setminus Y_1$ such that 
\begin{equation}\label{pezzo1}
g^{2m}<|f|\quad\text{on}\ U\setminus Y.
\end{equation}
We may assume $U:=\{f^2-g^{4m}>0\}$. Consider the global semianalytic set $S:=\{f^2-g^{4m}<0\}$ and its closure $\ol{S}$ in $X$. As $U$ is open and $S\cap U=\varnothing$, we get $\ol{S}\cap U=\varnothing$; hence,
$$
Y\cap\ol{S}\subset Y\setminus U\subset Y_1.
$$ 
By \cite[Thm. 2.5]{abs} there exists a positive semidefinite equation $h_0$ of $Y_1$ such that 
\begin{equation}\label{pezzo2}
h_0<|f|\quad\text{on}\ \ol{S}\setminus Y_1
\end{equation} 
and $h_0<1$ in $X$. Write $T:=\{f^2-g^{4m}=0\}$; clearly,
\begin{equation}\label{pezzo3}
X\setminus Y=(S\setminus Y)\cup(U\setminus Y)\cup(T\setminus Y).
\end{equation}
As the non-negative functions $h_0$, $\frac{|g|}{1+g^2}$ and $\frac{1}{1+g^2}$ are strictly smaller than $1$ on $X$, one deduces using equations \eqref{pezzo1}, \eqref{pezzo2} and \eqref{pezzo3}
$$
\Big(h_0\frac{g}{1+g^2}\Big)^{2m}<|f|
$$
on $X\setminus Y$. Thus, taking $h:=\frac{h_0}{1+g^2}$, we are done.
\end{proof}

Now we are ready to prove Theorem \ref{perfect2}.

\begin{proof}[Proof of Theorem \em\ref{perfect2}]
Following the notations of Definition \ref{csaturation}, consider the ideals
\begin{equation*}
\begin{split}
{\mathfrak L}_1(\gta):={\mathfrak C}_1({\sqrt[\text{\L}]{\gta}}) \quad\text{and}\quad {\mathfrak L}_2(\gta):={\mathfrak C}_2({\sqrt[\text{\L}]{\gta}}).
\end{split}
\end{equation*}
By Lemma \ref{csaturation} we have ${\mathfrak L}_1(\gta)={\mathfrak L}_2(\gta)=\widetilde{\sqrt[\text{\L}]{\gta}}$. Recall that by Remark \ref{csaturationr} (ii) it holds ${\mathfrak L}_i({\mathfrak L}_i(\gta))={\mathfrak L}_i(\gta)$. We want to show $\ideal(\ceros(\gta))=\widetilde{\sqrt[\text{\L}]{\gta}}$. Clearly $\widetilde{\sqrt[\text{\L}]{\gta}}\subset\ideal(\ceros(\gta))$, so it is enough to prove the inclusion $\ideal(\ceros(\gta))\subset{\mathfrak L}_1(\gta)$. 

Assume first that $\gta$ is a saturated ideal. By Lemma \ref{crespina} there exists a positive semidefinite $f\in\gta$ such that $\ceros(f)=\ceros(\gta)$. Let $g\in\ideal(\ceros(\gta))$ and $K\subset X$ be a compact set. By Proposition \ref{Loj} there exist an integer $m\geq1$ and an analytic function $h\in\an(X)$ such that $\ceros(h)\cap K=\varnothing$ and $f\geq(hg)^{2m}$, that is, $hg\in\sqrt[\text{\L}]{\gta}$. Thus,
$g\in{\mathfrak L}_1(\gta)$, so $\ideal(\ceros(\gta))\subset{\mathfrak L}_1(\gta)$.

For the general case we proceed as follows. By Lemma \ref{csaturation} we obtain $\tilde{\gta}={\mathfrak C}_1(\gta)\subset{\mathfrak C}_1({\sqrt[\text{\L}]{\gta}})={\mathfrak L}_1(\gta)$; hence, 
$$
\ideal(\ceros(\gta))=\ideal(\ceros(\tilde{\gta}))={\mathfrak L}_1(\tilde{\gta})={\mathfrak L}_1({\mathfrak C}_1 ({\gta}))\subset{\mathfrak L}_1({\mathfrak C}_1 ({\sqrt[\text{\L}]{\gta}}))={\mathfrak L}_1({\mathfrak L}_1(\gta))={\mathfrak L}_1(\gta),
$$ 
as required.
\end{proof}
\begin{remark}
We use the notations of the previous proof. If $\gta$ is saturated and $f\in\gta$ satisfies $\ceros(\gta)=\ceros(f)$, then
$$
\ideal(\ceros(\gta))=\ideal(\ceros(f^2))={\mathfrak L}_1(f^2\an(X))={\mathfrak L}_2(f^2\an(X)).
$$
The previous equality can be understood as the counterpart of Lemma \ref{newpres} and Corollary \ref{pol} in the $C$-analytic setting.
\end{remark}

\subsection{Convex ideals} 
We introduce this concept to relate \L ojasiewicz's radical with the classical radical. An ideal $\gta$ of $\an(X)$ is \em convex \em if each $g\in\an(X)$ satisfying $|g|\leq f$ for some $f\in\gta$ belongs to $\gta$. In particular, \L ojasiewicz's radical is a radical convex ideal. Moreover, we define the \em convex hull $\gtg(\gta)$ of an ideal $\gta$ of $\an(X)$ \em by
$$
\gtg(\gta):=\{g\in\an(X):\ \exists f\in\gta\text{ such that }|g|\leq f\}.
$$
Notice that $\gtg(\gta)$ is the smallest convex ideal of $\an(X)$ that contains $\gta$ and $\sqrt[\text{\L}]{\gta}=\sqrt{\gtg(\gta)}$.

\begin{remark}
If $\gta$ is a convex ideal of the ring $\an(X)$, then its radical $\sqrt{\gta}$ is also a convex ideal of this ring. 

Indeed, let $f\in\sqrt{\gta}$ and $g\in\an(X)$ be such that $|g|\leq f$. Let $m\geq1$ be such that $f^m\in\gta$. Clearly, $|g^m|\leq f^m$, so $g^m\in\gta$ and $g\in\sqrt{\gta}$.
\end{remark}

\begin{examples}\label{rperfect2}
(i) There exist saturated ideals $\gta$ of $\an(X)$ whose \L ojasiewicz radical $\sqrt[\text{\L}]{\gta}$ is not saturated and there exist non-saturated ideals of $\an(X)$ whose \L ojasiewicz radical $\sqrt[\text{\L}]{\gta}$ is saturated. Consider the Examples \ref{rperfect} after substituting $\C$ by $\R$. 

(ii) There exist convex saturated ideals that are not radical. Take $\gta:=(x^2,xy,y^2)\an(\R^2)$.

(iii) There exist radical saturated ideals that are not convex. Let $\gta:=(x^2+y^2)\an(\R^2)$.

(iv) There exist convex radical ideals that are not saturated. Indeed, let $f_1,f_2\in\an(\R)$ be such that $\ceros(f_1)=\ceros(f_2)=\N$ and ${\rm mult}_\ell(f_1)=\ell$ and ${\rm mult}_\ell(f_2)=1$ for all $\ell\geq1$. Let $\gta:=\sqrt[\text{\L}]{f_1^2\an(\R^3)}$, which is a radical convex ideal of $\an(\R)$. However, it is not saturated. 

Otherwise we obtain $\gta=\ideal(\ceros(\gta))$ by Theorem \ref{perfect2}, so $f_2\in\gta$. Thus, there exist $m\geq 1$ and $a\in\an(\R)$ such that $h:=af_1^2-f_2^{2m}\geq0$. As $h\geq0$, for each $\ell\geq1$ it holds
\begin{multline*}
2\ell\leq{\rm mult}_{\ell}(a)+2{\rm mult}_{\ell}(f_1)={\rm mult}_{\ell}(af_1^2)
={\rm mult}_{\ell}(h+f_2^{2m})\\
=\min\{{\rm mult}_{\ell}(h),2m{\rm mult}_{\ell}(f_2)\}=\min\{{\rm mult}_{\ell}(h),2m\}\leq 2m,
\end{multline*}
which is a contradiction. Therefore $\gta$ is not saturated.
\end{examples}

\begin{cor}\label{ahora}
Let $X\subset\R^n$ be a $C$-analytic set and $\gta$ a convex saturated ideal of $\an(X)$. Let $\gta=\bigcap_{i\in I}\gtq_i$ be a normal primary decomposition of $\gta$ and $J$ the collection of the indices corresponding to the isolated primary components of $\gta$. Then
\begin{itemize}
\item[(i)] If $i_0\in J$, $\sqrt{\gtq_{i_0}}$ is a convex saturated prime ideal.
\item[(ii)] $\widetilde{\sqrt[\text{\L}]{\gta}}=\bigcap_{j\in J}\sqrt{\gtq_j}=\widetilde{\sqrt{\gta}}$.
\end{itemize}
\end{cor}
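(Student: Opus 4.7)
I would prove (ii) first, and then use it to establish the convexity in (i).

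For (ii), the convexity of $\gta$ immediately gives $\sqrt{\gta}=\sqrt[\text{\L}]{\gta}$: the inclusion $\sqrt{\gta}\subseteq\sqrt[\text{\L}]{\gta}$ is automatic (take $f=g^{2m}\in\gta$), while conversely $g^{2m}\leq f\in\gta$ places $g^{2m}\in\gta$ by convexity, hence $g\in\sqrt{\gta}$. Saturating and invoking Remark \ref{int}, I obtain $\widetilde{\sqrt[\text{\L}]{\gta}}=\widetilde{\sqrt{\gta}}=\bigcap_{i\in I}\sqrt{\gtq_i}$. To collapse this intersection to $J$, I show that for every $i\in I$ some $j\in J$ satisfies $\sqrt{\gtq_j}\subseteq\sqrt{\gtq_i}$: pick $x\in\ceros(\sqrt{\gtq_i})=\ceros(\gtq_i)$ (non-empty since $\gtq_i$ is saturated primary); any inclusion $\sqrt{\gtq_k}\subseteq\sqrt{\gtq_i}$ forces $x\in\ceros(\gtq_k)$, and by local finiteness only finitely many indices $k$ qualify, so the set $\{k\in I:\sqrt{\gtq_k}\subseteq\sqrt{\gtq_i}\}$ is finite. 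A minimal element $\sqrt{\gtq_{k_0}}$ of this finite set is also minimal among all $\{\sqrt{\gtq_l}\}_{l\in I}$ (any $\sqrt{\gtq_l}\subsetneq\sqrt{\gtq_{k_0}}$ would still lie inside $\sqrt{\gtq_i}$), so $k_0\in J$.

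For (i), primality of $\sqrt{\gtq_{i_0}}$ follows from $\gtq_{i_0}$ being primary, and saturation follows from Remark \ref{int}; the substantive step is convexity. I would first verify that $\widetilde{\sqrt[\text{\L}]{\gta}}$ itself is convex using the characterization $\widetilde{\sqrt[\text{\L}]{\gta}}={\mathfrak C}_1(\sqrt[\text{\L}]{\gta})$ from Definition and Lemma \ref{csaturation}: given $|g|\leq f\in\widetilde{\sqrt[\text{\L}]{\gta}}$ and a compact $K\subset X$, choose $H\in\an(X)$ with $\ceros(H)\cap K=\varnothing$ and $Hf\in\sqrt[\text{\L}]{\gta}$; then $(Hg)^2\leq(Hf)^2\in\sqrt[\text{\L}]{\gta}$, so the convexity and radicality of $\sqrt[\text{\L}]{\gta}$ give $Hg\in\sqrt[\text{\L}]{\gta}$, whence $g\in{\mathfrak C}_1(\sqrt[\text{\L}]{\gta})=\widetilde{\sqrt[\text{\L}]{\gta}}$.

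To promote this convexity to the individual prime $\sqrt{\gtq_{i_0}}$, I look for a global separating function $h\in\bigcap_{j\in J\setminus\{i_0\}}\sqrt{\gtq_j}\setminus\sqrt{\gtq_{i_0}}$. Minimality of $\gtp_{i_0}=\sqrt{\gtq_{i_0}}$ in $\{\sqrt{\gtq_j}\}_{j\in J}$ ensures no single $\sqrt{\gtq_j}$ with $j\neq i_0$ is contained in $\sqrt{\gtq_{i_0}}$, so Lemma \ref{db1210} applied to the locally finite family of saturated ideals $\{\sqrt{\gtq_j}\}_{j\in J\setminus\{i_0\}}$ (whose zero sets form a subfamily of $\{\ceros(\gtq_i)\}_{i\in I}$) and the saturated prime $\sqrt{\gtq_{i_0}}$ forces $\bigcap_{j\in J\setminus\{i_0\}}\sqrt{\gtq_j}\not\subseteq\sqrt{\gtq_{i_0}}$, producing the desired $h$. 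Then for $|g|\leq f\in\sqrt{\gtq_{i_0}}$, I get $hf\in\bigcap_{j\in J}\sqrt{\gtq_j}=\widetilde{\sqrt[\text{\L}]{\gta}}$ and $(hg)^2\leq(hf)^2\in\widetilde{\sqrt[\text{\L}]{\gta}}$; convexity and radicality of $\widetilde{\sqrt[\text{\L}]{\gta}}$ give $hg\in\widetilde{\sqrt[\text{\L}]{\gta}}\subseteq\sqrt{\gtq_{i_0}}$, and primality combined with $h\notin\sqrt{\gtq_{i_0}}$ forces $g\in\sqrt{\gtq_{i_0}}$. The main obstacle is precisely producing the global separating function $h$ despite the possible infinitude of $J$: classical prime avoidance settles this at once when $J$ is finite, but in the infinite case one must exploit the locally finite character of the normal primary decomposition, encoded in Lemma \ref{db1210}, to pass from a pointwise/finite statement to a global one.
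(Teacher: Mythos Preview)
Your argument is correct but reverses the paper's order of attack. The paper proves (i) first: it picks a separator $h_{i_0}\in\bigcap_{j\in I\setminus\{i_0\}}\gtq_j\setminus\sqrt{\gtq_{i_0}}$ via Lemma \ref{db1210} and applies the convexity of $\gta$ \emph{itself}: if $|g|\leq f$ with $f\in\sqrt{\gtq_{i_0}}$, then (a suitable power of) $f$ multiplied by $h_{i_0}^2$ lands in $\bigcap_{i\in I}\gtq_i=\gta$, convexity pulls the corresponding power of $g$ times $h_{i_0}^2$ into $\gta\subset\gtq_{i_0}$, and primarity of $\gtq_{i_0}$ together with $h_{i_0}\notin\sqrt{\gtq_{i_0}}$ yields $g\in\sqrt{\gtq_{i_0}}$. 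Part (ii) is then deduced from (i) and the Nullstellensatz (Theorem \ref{perfect2}): $\widetilde{\sqrt[\text{\L}]{\gta}}=\ideal(\ceros(\gta))=\bigcap_{j\in J}\ideal(\ceros(\gtq_j))=\bigcap_{j\in J}\sqrt{\gtq_j}$.

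You instead obtain (ii) directly from the convexity of $\gta$ (which gives $\sqrt[\text{\L}]{\gta}=\sqrt{\gta}$) and Remark \ref{int}, bypassing Theorem \ref{perfect2} entirely; then for (i) you use convexity of $\widetilde{\sqrt[\text{\L}]{\gta}}=\bigcap_{j\in J}\sqrt{\gtq_j}$ rather than of $\gta$, with a separator drawn only from the isolated primes. Your route to (ii) is more elementary, while the paper's route to (i) is a bit more direct since it works with the given ideal $\gta$ and the full primary decomposition rather than with the saturated \L ojasiewicz radical. Two minor points: the saturation of $\sqrt{\gtq_{i_0}}$ is Lemma \ref{lemmaprimary}(ii) rather than Remark \ref{int}, and your verification that $\widetilde{\sqrt[\text{\L}]{\gta}}$ is convex is exactly Lemma \ref{cs}(i) applied to the convex ideal $\sqrt[\text{\L}]{\gta}$.
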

\begin{proof}
(i) By Lemma \ref{lemmaprimary}(ii) we know that $\sqrt{\gtq_{i_0}}$ is a prime saturated ideal (because so is $\gtq_{i_0}$). We claim: \em There exists $h_{i_0}\in\bigcap_{j\neq i_0}\gtq_j\setminus\sqrt{\gtq_{i_0}}$\em. Otherwise, by Lemma \ref{db1210} there exists $j\neq i_0$ such that $\gtq_i\subset{\gtq_{i_0}}$; hence, $\sqrt{\gtq_j}\subset\sqrt{\gtq_{i_0}}$ and by the minimality of $\sqrt{\gtq_{i_0}}$ we deduce $\sqrt{\gtq_j}=\sqrt{\gtq_{i_0}}$, which contradicts the fact that the primary decomposition is normal. 

Fix $h_{i_0}\in\bigcap_{j\neq i_0}\gtq_j\setminus\sqrt{\gtq_{i_0}}$ and let $g\in\an(X)$ be such that $|g|\leq f$ for some $f\in\sqrt{\gtq_{i_0}}$. Then $|gh_{i_0}^2|\leq fh_{i_0}^2$ and since $\gta$ is convex and $fh_{i_0}^2\in\bigcap_{i\in I}\gtq_i=\gta$, we deduce $gh_{i_0}^2\in\gta\subset\gtq_{i_0}$; hence, we conclude $g\in\gtq_{i_0}$ because $\gtq_{i_0}$ is primary and $h_{i_0}\not\in\sqrt{\gtq_{i_0}}$.

(ii) By Theorem \ref{perfect2} and Remark \ref{int} we conclude
$$
\widetilde{\sqrt[\text{\L}]{\gta}}=\ideal(\ceros(\gta))=\bigcap_{j\in J}\ideal(\ceros(\gtq_j))=\bigcap_{j\in J}\sqrt{\gtq_j}=\bigcap_{i\in I}\sqrt{\gtq_i}=\widetilde{\sqrt{\gta}},
$$
as required.
\end{proof}

\begin{examples}\label{ahora2}
(i) The primary ideal $\gtq:=(x^2,y^2)\an(\R^2)$ is not convex while $\sqrt{\gtq}=(x,y)\an(\R^2)=\ideal(\ceros(\gtq))$ is convex. The functions $f:=x^2+y^2\in\gtq$ and $g:=xy\in\an(\R^2)$ satisfy $|g|\leq f$ but $g\not\in\gtq$. Thus, $\gtq$ is not convex.

(ii) Under the hypotheses of Corollary \ref{ahora} the corresponding result is no longer true if $\sqrt{\gtq_{i_0}}$ is the radical of an immersed primary component of $\gta$. Let $\gta:=\gtq_1\cap\gtq_2=(z^3(x^2+y^2),z^4)\an(\R^3)$ be the intersection of the primary ideals $\gtq_1:=z^3\an(\R^3)$ and $\gtq_2:=(x^2+y^2,z^4)\an(\R^3)$. Observe that $\sqrt{\gtq_1}\subsetneq\sqrt{\gtq_2}$. Let us check that $\gta$ is convex while $\gtq_2$ is not convex. 

Indeed, if $f\in\gta$ is positive semidefinite, then $z^4$ divides $f$. If $g\in\an(\R^3)$ satifies $|g|\leq f$, then $z^4$ divides $g$, so $g\in\gta$; hence, $\gta$ is convex. However, $\gtq_2$ is not convex because $x^2\leq x^2+y^2$ but $x^2\not\in\gtq_2$.
\end{examples}

By Theorem \ref{perfect2} the equality $\ideal(\ceros(\gta))=\widetilde{\sqrt{\gtg(\gta)}}$ holds for each ideal $\gta$ of $\an(X)$ where $X\subset\R^n$ is a $C$-analytic set. The last part of this section will be dedicated to determine how the operations $\widetilde{\cdot}$, $\sqrt{\cdot}$ and $\gtg(\cdot)$ commute.

\begin{lem}\label{cs}
Let $X\subset\R^n$ be a $C$-analytic set and $\gta$ an ideal of $\an(X)$. Then
\begin{itemize}
\item[(i)] If $\gta$ is convex, $\tilde{\gta}$ is also convex.
\item[(ii)] There exists $f\in\tilde{\gta}$ such that $(\widetilde{\gtg{(\tilde{\mbox{\!\!$\gta$\,\,}})}})^2\subset\gtg(f\an(X))\subset\gtg(\tilde{\gta})$. In particular, 
$$
\sqrt{\widetilde{\gtg(\text{$\tilde{\gta}$})}}\,\,\,=\sqrt{\gtg(f\an(X))}=\sqrt{\gtg(\tilde{\gta})}=\sqrt[\text{\em \L}]{\tilde{\gta}}.
$$
\end{itemize}
\end{lem}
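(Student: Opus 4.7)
The plan for (i) is to exploit the characterization $\tilde{\gta}={\mathfrak C}_2(\gta)$ from Lemma~\ref{csaturation}. Assume $\gta$ is convex and let $g\in\an(X)$ and $F\in\tilde{\gta}$ satisfy $|g|\leq F$. Given $x\in X$, pick $h\in\an(X)$ with $h(x)\neq0$ and $hF\in\gta$; then $h^2F\in\gta$ and $|h^2g|=h^2|g|\leq h^2F$, so the convexity of $\gta$ forces $h^2g\in\gta$. Since $h^2(x)\neq0$, this places $g$ in ${\mathfrak C}_2(\gta)=\tilde{\gta}$.

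For (ii) I would fix $f\in\tilde{\gta}$ to be the element provided by Lemma~\ref{crespina} applied to $\tilde{\gta}$. The right-hand inclusion $\gtg(f\an(X))\subset\gtg(\tilde{\gta})$ is then immediate from $f\in\tilde{\gta}$. Because $2|ab|\leq a^2+b^2$ and $\gtg(f\an(X))$ is a convex ideal, proving the left-hand inclusion $(\widetilde{\gtg(\tilde{\gta})})^2\subset\gtg(f\an(X))$ reduces to showing $a^2\in\gtg(f\an(X))$ for every $a\in\widetilde{\gtg(\tilde{\gta})}$. Fixing such an $a$ and an exhaustion $\{K_m\}_{m\geq1}$ of $X$ by compact sets, Lemma~\ref{csaturation} applied to $\gtg(\tilde{\gta})$ yields, for each $m$, an element $h_m\in\an(X)$ with $\ceros(h_m)\cap K_m=\varnothing$ and $h_ma\in\gtg(\tilde{\gta})$. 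Hence there exists $F_m\in\tilde{\gta}$ with $|h_ma|\leq F_m$, and Lemma~\ref{crespina}(ii) produces a unit $u_m\in\an(X)$ with $F_m^2\leq fu_m$. Combining these gives the family of global inequalities
$$
h_m^2a^2\leq F_m^2\leq fu_m\qquad\text{on }X.
$$

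The main obstacle is to patch this sequence of estimates into a single global one of the shape $a^2\leq fv$ with $v\in\an(X)$. The plan is to mimic the weighted sum-of-squares construction in the proof of Lemma~\ref{crespina}: pick weights $\varepsilon_m>0$ decaying sufficiently fast relative to the Fr\'{e}chet seminorms coming from an exhaustion by compact sets of an invariant Stein neighbourhood $\Omega$ of $X$ in $\C^n$ to which the holomorphic extensions of all the $h_m$ and $u_m$ are defined, so that both series
$$
H:=\sum_{m\geq1}\varepsilon_mh_m^2\qquad\text{and}\qquad V:=\sum_{m\geq1}\varepsilon_mu_m
$$
converge absolutely in $\an(X)$. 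Multiplying $h_m^2a^2\leq fu_m$ by $\varepsilon_m$ and summing term by term gives $H\cdot a^2\leq fV$. Since every $x\in X$ lies in $K_m$ for all $m$ sufficiently large and then $h_m(x)\neq0$, the function $H$ is strictly positive on $X$ and is therefore a unit of $\an(X)$. Dividing by $H$ we obtain $a^2\leq f(V/H)$ with $V/H\in\an(X)$, that is, $a^2\in\gtg(f\an(X))$.

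Finally, the displayed equalities of radicals in the ``in particular'' clause are formal consequences of the double inclusion just established. Using $\sqrt{I^2}=\sqrt{I}$ together with $\gtg(\tilde{\gta})\subset\widetilde{\gtg(\tilde{\gta})}$ one gets
$$
\sqrt{\widetilde{\gtg(\tilde{\gta})}}\subset\sqrt{\gtg(f\an(X))}\subset\sqrt{\gtg(\tilde{\gta})}\subset\sqrt{\widetilde{\gtg(\tilde{\gta})}},
$$
so the three radicals coincide, and the last one equals $\sqrt[\text{\L}]{\tilde{\gta}}$ by the identity $\sqrt[\text{\L}]{\cdot}=\sqrt{\gtg(\cdot)}$ noted just before the lemma.
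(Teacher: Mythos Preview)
Your proof of (i) is essentially the paper's, only using the pointwise characterization ${\mathfrak C}_2$ in place of the compact-set version ${\mathfrak C}_1$.

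For (ii) your patching strategy differs from the paper's. After reaching the family of global inequalities $h_m^2a^2\leq fu_m$, the paper does not sum them: it observes that on the compact set $K_m$ (where $h_m$ has no zeros) one can extract a \emph{constant} $M_m>0$ with $a^2|_{K_m}\leq f|_{K_m}M_m$, and then builds a single strictly positive analytic $u\in\an(X)$ with $u\geq M_m$ on each shell $K_m\setminus K_{m-1}$, giving $a^2\leq fu$ directly. Your weighted-series construction $H=\sum_m\varepsilon_mh_m^2$, $V=\sum_m\varepsilon_mu_m$ is an appealing alternative, but it has a gap: by the definition of convergence in $\an(X)$ recalled in \S\ref{s2}, convergence of a series requires holomorphic extensions of all its terms to a \emph{common} complexification, and an arbitrary countable family of real-analytic functions need not admit one (think of $h_m(x)=1/(1+m^2x^2)$ on $\R$). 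You assert the existence of such an $\Omega$ without argument. The gap is repairable---the $h_m$ can be chosen as restrictions of holomorphic sections over a fixed Stein neighbourhood, following the proof of Lemma~\ref{csaturation}, and the units $u_m$ in Lemma~\ref{crespina} can be enlarged to restrictions of entire functions---but as written the step is incomplete. The paper's constant-extraction route is designed precisely to avoid this issue.

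A smaller point in the same vein: Lemma~\ref{crespina} is stated only for ideals of $\an(\R^n)$, not of $\an(X)$. This is why the paper first lifts $\gta$ to an ideal $\gtb\subset\an(\R^n)$ with $\gta=\gtb/\ideal(X)$ before invoking it; you apply it directly to $\tilde\gta\subset\an(X)$.
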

\begin{proof}
(i) Let $g\in\an(X)$ and $f\in\tilde{\gta}$ be such that $|g|\leq f$. Let $K\subset X$ be a compact set. By Lemma \ref{csaturation} there exists $h\in\an(X)$ such that $\ceros(h)\cap K=\varnothing$ and $h^2f\in\gta$. As $|h^2g|\leq h^2f$ and $\gta$ is convex, we deduce $h^2g\in\gta$. By Lemma \ref{csaturation} we obtain $g\in\tilde{\gta}$, so $\tilde{\gta}$ is convex.

(ii) As $\an(X)=\an(\R^n)/\ideal(X)$, there exists an ideal $\gtb$ of $\an(\R^n)$ that contains $\ideal(X)$ such that $\gta=\gtb/\ideal(X)$. For the sake of clearness we denote the elements of $\an(X)=\an(\R^n)/\ideal(X)$ with $\widehat{h}:=h+\ideal(X)$. By Lemma \ref{crespina} there exists $f\in\tilde{\gtb}$ such that for each $a\in\tilde{\gtb}$ there exists a unit $u\in\an(\R^n)$ satisfying $a^2\leq fu$; hence, $\widehat{a}^2\leq\widehat{f}\widehat{u}$.

Pick $\widehat{g}\in\widetilde{\gtg(\text{$\tilde{\gta}$})}\ $ and let $K\subset X$ be a compact set. Then there exists a function $\widehat{h}_K\in\an(X)$ such that $\ceros(\widehat{h}_K)\cap K=\varnothing$ and $\widehat{h}_K\widehat{g}\in\gtg(\text{$\tilde{\gta}$})$; hence, there exists $\widehat{a}_K\in\text{$\tilde{\gta}$}$ such that $|\widehat{h}_K\widehat{g}|\leq\widehat{a}_K$.

Let $u_K\in\an(\R^n)$ be a unit such that $a_K^2\leq fu_K$ and let $M_K>0$ be such that $\widehat{g}^2|_K\leq\widehat{f}|_KM_K$ (recall that $|\widehat{h}_K\widehat{g}|\leq\widehat{a}_K$ and $\ceros(\widehat{h}_K)\cap K=\varnothing$). Fix an exhaustion $\{K_m\}_{m\geq1}$ of $X$ by compact sets and let $\widehat{u}\in\an(X)$ be a strictly positive analytic function such that $M_{K_m}\leq\widehat{u}|_{K_m\setminus K_{m-1}}$ for all $m\geq1$. Then $\widehat{g}^2\leq \widehat{f}\widehat{u}$; hence, $\widehat{g}^2\in\gtg(\widehat{f}\an(X))$.

To finish observe the following: If $\widehat{g}_1,\widehat{g}_2\in\widetilde{\gtg(\text{$\tilde{\gta}$})}$\,\,\,, then there exist strictly positive analytic functions $\widehat{u}_1,\widehat{u}_2\in\an(X)$ such that $\widehat{g}_i^2\leq\widehat{f}\widehat{u}_i^2$ for $i=1,2$; hence, $|\widehat{g}_1\widehat{g}_2|\leq\widehat{g}_1^2+\widehat{g}_2^2\leq\widehat{f}(\widehat{u}_1^2+\widehat{u}_2^2)$, so $\widehat{g}_1\widehat{g}_2\in\gtg(\widehat{f}\an(X))$. Thus, $(\widetilde{\gtg(\text{$\tilde{\gta}$})}\,\,)^2\subset\gtg(f\an(X))$, as required.
\end{proof}
\begin{remark}\label{49}
If we are working in the framework of convex saturated ideals, it holds an analogous result to Theorem \ref{fo} when substituting `Stein space' by `$C$-analytic set' and `closed ideal' by `convex saturated ideal'. The proof runs analogously to the one of Theorem \ref{fo} (\cite[\S5.Satz 9]{of}) and we leave the concrete details to the reader.
\end{remark}

\section{The real-analytic radical and the real Nullstellensatz}\label{s6}

In this section we prove Theorem \ref{h17nss}, that is, we relate the real Nullstellensatz with the classical real radical by means of the representation of positive semidefinite functions as sums of squares of meromorphic functions. We begin by recalling the definition of $H$-sets and $H^{\mathsf a}$-sets and presenting some properties.

\begin{defn}\label{rr}
A $C$-analytic set $Z\subset\R^n$ is an {\em $H$-set} if each positive semidefinite analytic function
$f\in\an(\R^n)$ whose zero-set is $Z$ can be represented as a sum of squares of meromorphic functions on $\R^n$. More generally, we say that $Z$ is an {\em $H^{\mathsf a}$-set} if such representation may involve infinitely many squares.
\end{defn}

The following properties are stated and proved for $H^{\mathsf a}$-sets but many of them work analogously for $H$-sets.

\begin{remarks}\label{rrr}
(i) Let $Y\subset Z\subset\R^n$ be $C$-analytic sets. If $Z$ is an $H^{\mathsf a}$-set, then $Y$ is also an $H^{\mathsf a}$-set.

Indeed, let $f\in\an(\R^n)$ be a positive semidefinite analytic function such that $\ceros(f)=Y$. Let now $g\in\an(\R^n)$ be an analytic function such that $\ceros(g)=Z$. Observe that $h:=g^2f$ is positive semidefinite and $\ceros(h)=Z$; hence, $h$ is a sum of squares of meromorphic functions on $\R^n$, so the same happens for $f$. Thus, $Y$ is an $H^{\mathsf a}$-set.

(ii) If $Z\subset\R^n$ is an $H$-set, the same holds for each global irreducible component of $Z$.

(iii) Let $Z\subset\R^n$ be a $C$-analytic set. Then $Z$ is an $H^{\mathsf a}$-set if and only if there exists a positive semidefinite $f\in\an(\R^n)$ such that $\ceros(f)=Z$ and each $h\in\an(\R^n)$ with $\ceros(h)=Z$ and $0\leq h\leq f$ is a sum of squares of meromorphic functions on $\R^n$.

\begin{proof}
The `only if' implication is clear. Conversely, assume there exists a positive semidefinite analytic equation $f$ of $Z$ with the property in the statement and let $g\in\an(\R^n)$ be another positive equation of $Z$. Observe
$$
f-\Big(\frac{f}{\sqrt{1+fg}}\Big)^2g=f\Big(1-\frac{fg}{1+fg}\Big)\geq0\quad\text{and}\quad\ceros
\Big(\Big(\frac{f}{\sqrt{1+fg}}\Big)^2g\Big)=Z.
$$ 
Thus, $0\leq h:=(\frac{f}{\sqrt{1+fg}})^2g\leq f$ and $h$ is by hypothesis a sum of squares of meromorphic functions on $\R^n$, so the same happens with $g$. Thus, $Z$ is an $H^{\mathsf a}$-set.
\end{proof}

(iv) By \cite{j2,rz1} each compact $C$-analytic subset of $\R^n$ is an $H$-set. Therefore, by \cite[1.9]{abfr3} each $C$-analytic set $Z$ whose connected components are compact is an $H^{\mathsf a}$-set.

(v) Let $Z$ be a $C$-analytic set. By \cite[1.2]{abf} we obtain that $Z$ is an $H^{\mathsf a}$-set if and only if each global irreducible function $f\in\an(\R^n)$ with $\ceros(f)\subset Z$ is a sum of squares of meromorphic functions on $\R^n$.

(vi) Hilbert's 17th Problem in its more general formulation involving infinite sums of squares has a positive answer for $\an(\R^n)$ if and only if all connected $C$-analytic subsets of $\R^n$ of dimensions $1\leq d\leq n-2$ are $H^{\mathsf a}$-sets. Recall that given a $C$-analytic set $Z\subset\R^n$ of codimension $\geq2$, there exists by \cite{dem} an irreducible analytic function $f\in\an(\R^n)$ whose zero-set is $Z$. 
\end{remarks}

In \cite[Lem. 4.1]{abfr3} we developed a procedure to move the remainder $\ceros(b)\setminus\ceros(f)$ of the zero set of the denominator $b$ in a representation of a positive semidefinite analytic function $f$ as a sum of squares of meromorphic functions while $f$ was kept invariant (up to multiplication by a unit $u\in\an(\R^n)$). This tool was crucial to eliminate the remainder $\ceros(b)\setminus\ceros(f)$. The following result, in analogy to \cite[Lem. 4.1]{abfr3}, is used in the proof of Theorem \ref{h17nss} to perturb the complex part of the zero set $\ceros(B)$ of a holomorphic extension $B$ of $b$ while $f$ is again kept invariant (up to multiplication by a unit $u\in\an(\R^n)$). This is the clue to prove in Theorem \ref{h17nss} that if $\gtp$ is a real saturated prime ideal whose zero set is an $H$-set, then $\ideal(\ceros(\gtp))=\gtp$.

\begin{lem}[Perturbing denominators]\label{move}
Let $b,f\in\an(\R^n)$ be non-constant analytic functions. Let $\Omega$ be an invariant open neighbourhood $\Omega$ of $\R^n$ in $\C^n$ to which both $b,f$ extend to holomorphic functions $B,F$. Let $Z\subset\Omega$ be a complex analytic set such that $Z_{x_0}\not\subset\ceros(F)_{x_0}$ for some $x_0\in\ceros(f)$. Then there exists an analytic diffeomorphism $\varphi:\R^n\to\R^n$ such that:
\begin{itemize}
\item[(i)] $f\circ\varphi=fu$ for some unit $u\in\an(\R^n)$.
\item[(ii)] $Z_{x_0}\not\subset\ceros(B_0)_{x_0}$ where $B_0:\Omega_0\to\C$ is the holomorphic extension of $b_0:=b\circ\varphi$ to a small enough open neighbourhood $\Omega_0\subset\Omega$ of $\R^n$ in $\C^n$.
\end{itemize}
\end{lem}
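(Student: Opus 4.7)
The plan is to construct $\varphi$ as a perturbation of the identity along a generic constant direction $w_0 \in \R^n$ modulated by $f$, namely $\varphi(x) := x + \epsilon\, f(x)\,\mu(x)\,w_0$, for a strictly positive analytic scalar factor $\mu$ to be chosen and a small real parameter $\epsilon$. If $Z_{x_0}\not\subset\ceros(B)_{x_0}$, then $\varphi=\id_{\R^n}$ already works, so I assume $Z_{x_0}\subset\ceros(B)_{x_0}$; in particular $B\not\equiv 0$ on the connected component of $\Omega$ containing $x_0$, since $b$ is non-constant.

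First I choose $\mu>0$ real analytic on $\R^n$ such that $\mu$, $\mu\,\partial_i f$ and $f\,\partial_i\mu$ are bounded on $\R^n$ for every $i$ (a continuous positive upper bound for the required quantities exists, and Whitney-type approximation by real analytic functions yields such a $\mu$). The fundamental theorem of calculus gives
\[
f\circ\varphi(x) \,=\, f(x)\Bigl(1 + \epsilon\,\mu(x)\int_0^1 w_0\cdot \nabla f\bigl(x+t\epsilon f(x)\mu(x)w_0\bigr)\,dt\Bigr) \,=:\, f(x)\cdot u(x),
\]
and for $|\epsilon|$ small $u$ is a strictly positive real analytic function, hence a unit of $\an(\R^n)$. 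The same bound shows that $\|D\varphi-\id\|_{\infty}=O(\epsilon)$ and $\|\varphi-\id\|_{\infty}=O(\epsilon)$, so $\varphi$ is a proper $C^1$-small perturbation of the identity, and hence an analytic diffeomorphism of $\R^n$; this settles (i).

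For (ii), the holomorphic extension $\varphi^{\C}(z)=z+\epsilon F(z)\mu^{\C}(z)w_0$ is defined and satisfies $\varphi^{\C}(\Omega_0)\subset\Omega$ on a suitable invariant open neighbourhood $\Omega_0\subset\Omega$ of $\R^n$, so that $B_0:=B\circ\varphi^{\C}\in H^0(\Omega_0,\an_{\Omega_0})$. Using the hypothesis $Z_{x_0}\not\subset\ceros(F)_{x_0}$, pick $z_1\in Z$ arbitrarily close to $x_0$ with $F(z_1)\neq 0$; by continuity $\mu^{\C}(z_1)\neq 0$ as well, so the scalar $c:=\epsilon F(z_1)\mu^{\C}(z_1)$ is a nonzero complex number. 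The real analytic map $w_0\mapsto B(z_1+c\,w_0)$ from $\R^n$ to $\C$ cannot be identically zero: otherwise $B$ would vanish on the totally real $n$-dimensional affine subspace $z_1+c\R^n$ of $\C^n$, whose complex linear span is all of $\C^n$, and by the identity principle for holomorphic functions $B$ would vanish on the whole connected component of $\Omega$ containing $z_1$, contradicting our standing assumption. Consequently, for any $w_0$ outside a proper real analytic subset of $\R^n$ we obtain $B_0(z_1)\neq 0$, and therefore $Z_{x_0}\not\subset\ceros(B_0)_{x_0}$.

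The main technical obstacle is the clean choice of $\mu$, which must simultaneously absorb the (possibly unbounded) growth of $\nabla f$ at infinity and keep $\varphi^{\C}$ defined on a neighbourhood of $\R^n$; once that is in place, property (ii) is essentially a one-line consequence of the identity principle for holomorphic functions in totally real directions combined with the genericity of the direction $w_0$.
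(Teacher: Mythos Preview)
Your overall architecture is the same as the paper's: perturb the identity by a vector field proportional to $f$ (the paper uses $f^2$) so that $f$ is preserved up to a unit, then choose the perturbation direction generically so that the new $B_0$ does not vanish on $Z$ near $x_0$. The construction in part (i) is essentially the paper's Step~1, phrased with an integral remainder instead of the factorisation $f(x+ty)-f(x)=t\,h(x,y,t)$.

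There is, however, a genuine gap in your argument for (ii). You produce a \emph{single} point $z_1\in Z$ close to $x_0$ with $B_0(z_1)\neq 0$ and conclude $Z_{x_0}\not\subset\ceros(B_0)_{x_0}$. This does not follow: since $F(z_1)\neq 0$ while $f(x_0)=0$, necessarily $z_1\neq x_0$, and nothing prevents $Z$ from being contained in $\ceros(B_0)$ on a smaller neighbourhood of $x_0$ that misses $z_1$. The germ statement requires points of $Z\setminus\ceros(B_0)$ \emph{accumulating} at $x_0$. Note also that the bad set of directions $w_0$ depends on $z_1$, so you cannot simply let $z_1\to x_0$ after the fact.

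The paper handles this by taking a holomorphic curve $\gamma:\D_\delta\to Z$ with $\gamma(0)=x_0$ and $\gamma(s)\notin\ceros(F)$ for $s\neq 0$ (complex curve selection), then using Baire's theorem on a sequence $s_k\to 0$ to find a single parameter $\lambda_0$ with $B_0(\gamma(s_k))\neq 0$ for all $k$; since $B_0\circ\gamma$ is holomorphic and not identically zero, its zeros are isolated, which gives the germ conclusion. Your argument can be repaired either the same way, or more cheaply: choose from the start an irreducible component $Z'$ of the germ $Z_{x_0}$ with $Z'_{x_0}\not\subset\ceros(F)_{x_0}$, pick $z_1\in Z'$ with $F(z_1)\neq 0$, and then your generic $w_0$ gives $B_0|_{Z'}\not\equiv 0$, whence $Z'\cap\ceros(B_0)$ is nowhere dense in $Z'$ and $Z'_{x_0}\not\subset\ceros(B_0)_{x_0}$.
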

\begin{proof}
We may assume that $b$ can be extended to a holomorphic function $B$ on $\Omega$ and $Z_{x_0}\subset\ceros(B)_{x_0}$ because otherwise we choose $\varphi=\id$ and are done. The proof is conducted in two steps:\setcounter{substep}{0}

\vspace{2mm}
\noindent{\sc Step 1}. \em We construct a family of analytic diffeomorphisms $\phi_\lambda:\R^n\to\R^n$ depending on a parameter $\lambda\in[-1,1]^n$ that satisfy condition \em (i) \em in the statement\em.

Fix a strictly positive analytic function $\veps\in\an(\R^n)$ and for each $\lambda:=(\lambda_1,\ldots,\lambda_n)\in[-1,1]^n$ consider the analytic map 
$$
\phi_\lambda:\R^n\to\R^n,\ x\mapsto x+f^2(x)\veps(x)\lambda.
$$ 
We choose $\veps$ small enough in such a way that $\phi_\lambda$ is by \cite[2.1.7]{h} an analytic diffeomorphism for each $\lambda\in[-1,1]^n$. Since the function 
$$
f_0:\R^n\times\R^n\times\R\to\R,\ (x,y,t)\mapsto f(x+ty)-f(x)
$$
vanishes identically on the set $\R^n\times\R^n\times\{0\}$, there exists an analytic $h\in\an(\R^n\times\R^n\times\R)$ such that $f_0=ht$. Thus,
\begin{equation}\label{ulam}
f\circ\phi_\lambda(x)=f(x)+f(x)^2\veps(x)h(x,\lambda,f(x)^2\veps(x))=f(x)u_\lambda(x)
\end{equation}
where $u_\lambda(x):=1+f(x)\veps(x)h(x,\lambda,f(x)^2\veps(x))$. Note that $\ceros(f\circ\phi_\lambda)\subset\ceros(f)$, so $\ceros(f\circ\phi_\lambda)=\ceros(f)$. 

Indeed, if $x\in\R^n$ satisfies $f\circ\phi_\lambda(x)=0$, then $y:=\phi_\lambda(x)\in\ceros(f)$. Since $\phi_\lambda$ is bijective and $\phi_\lambda(y)=y$ (because $f(y)=0$), we deduce $x=y\in\ceros(f)$.

By its definition $u_\lambda$ is a unit in a neighbourhood of $\ceros(f)$ and does not vanish outside $\ceros(f)=\ceros(f\circ\phi_\lambda)$ (see equation \eqref{ulam}), so we conclude that $u_\lambda$ is a unit in $\an(\R^n)$ for all $\lambda\in[-1,1]^n$. Therefore the diffeomorphisms $\phi_\lambda$ satisfy condition (i) for all $\lambda\in[-1,1]^n$. 

\vspace{2mm}
\noindent{\sc Step 2}. \em We find now $\lambda_0\in[-1,1]^n$ such that $\varphi:=\phi_{\lambda_0}$ also satisfies condition \em (ii). Consider the family of diffeomorphisms $\phi_\lambda$ as the analytic map
$$
\phi:\R^n\times[-1,1]^n\to\R^n,\ (x,\lambda)\mapsto\phi_\lambda(x).
$$
After shrinking $\Omega$, we may assume that $\veps,b$ can be extended holomorphically to $E,B\in\an(\Omega)$ and $\Omega$ is connected. Thus, $\phi$ can be extended to the holomorphic map 
$$
\Phi:\Omega\times\C^n\to\C^n,\ (z,\mu)\mapsto z+F^2(z)E(z)\mu.
$$ 
Let $U:=\Phi^{-1}(\Omega)$ and consider the holomorphic function
$$
B\circ\Phi:U\to\C,\ (w,\mu)\mapsto B\circ\Phi(w,\mu)=B\circ\Phi_\mu(w).
$$
Fix a polydisc $\Delta_0\times\Delta_1\subset\Omega\times\C^n$ of center $(x_0,0)$ and radius $0<\rho<1$ contained in $U$. Then it holds:

\begin{substeps}{move}\label{move0}
\em The map $(B\circ\Phi)_w:\Delta_1\to\C,\ \mu\mapsto(B\circ\Phi)(w,\mu)$ is not identically zero for each $w\in\Delta_0$\em.
\end{substeps}

Otherwise there exists $w\in\Delta_0$ such that
$$
(B\circ\Phi)_w(\mu):=B\circ\Phi(w,\mu)=B(w+F^2(w)E(w)\mu)
$$
is identically zero on the polydisc $\Delta_1$. By the Identity Principle we deduce that $B$ is identically zero, which contradicts the hypothesis that $b$ is not constant.

Since $Z_{x_0}\not\subset\ceros(F)_{x_0}$, by the complex curve selection lemma there exists a complex analytic curve $\gamma:{\mathbb D}_\delta\to Z$ (defined on the disc ${\mathbb D}_\delta$) such that $\gamma({\mathbb D}_\delta)\subset\Delta_0$, $\gamma(0)=x_0$ and $\gamma(s)\not\in\ceros(F)$ for all $s\neq0$. Consider the holomorphic function 
$$
G:{\mathbb D}_\delta\times\Delta_1\to\C,\ (s,\mu)\mapsto(B\circ\Phi)(\gamma(s),\mu).
$$
We know by \ref{move}.\ref{move0} that the holomorphic function $G_s:\Delta_1\to\C,\ \mu\mapsto G(s,\mu)$ is not identically zero for each $s\in{\mathbb D}_\delta$. Choose now a sequence $\{s_k\}_k\subset{\mathbb D}_\delta$ converging to $0$ and observe that for each $k$ the set $W_k:=(\Delta_1\cap\R^n)\setminus\ceros(G_{s_k})=[-\rho,\rho]^n\setminus\ceros(G_{s_k})$ is open and dense in $\Delta_1\cap\R^n=[-\rho,\rho]^n$ because each $\ceros(G_{s_k})$ is a proper analytic subset of $\Delta_1$. By Baire's Theorem the intersection $W:=\bigcap_{k\geq1}W_k$ is dense in $\Delta_1\cap\R^n$ and we choose $\lambda_0\in W$.

If $b_0:=b\circ\phi_{\lambda_0}$, then $B_0:=B\circ\Phi_{\lambda_0}$ is its holomorphic extension to $\Omega$ where $\Phi_{\lambda_0}:\Omega\to\C^n,\ z\mapsto\Phi(z,\lambda_0)$. By the choice of $\lambda_0$ we have $B_0\circ\gamma(s_k)\neq0$ for all $k\geq1$; hence $B_0\circ\gamma$ is not identically zero on ${\mathbb D}_{\delta}$, so the germ $(B_0\circ\gamma)_0\neq0$. We conclude $Z_{x_0}\not\subset\ceros(B_0)_{x_0}$, as required.
\end{proof}

Once this is proved, we approach the proof of Theorem \ref{h17nss} when $\ceros(\gta)$ is an $H^{\mathsf a}$-set. The proof is similar if $\ceros(\gta)$ is an $H$-set.

\begin{proof}[Proof of Theorem {\em\ref{h17nss}}]
The proof is conducted in several steps:\setcounter{substep}{0}

\vspace{2mm}
\noindent{\sc Step 1}. Assume first that $\gta=\gtp$ is in addition a prime ideal. Since $\an(X)=\an(\R^n)/\ideal(X)$, we may assume by the correspondence theorem for ideals that $\gtp$ is a saturated real prime ideal of $\an(\R^n)$. Observe that the `only if' implication is clear since $\ideal(\ceros(\gtp))$ is real-analytic and saturated. For the converse, we proceed as follows. By \cite[Prop.2 \& 5]{c2} the sheaf of ideals $\gtp\an_{\R^n}$ can be extended to a coherent sheaf of ideals $\Jhaz$ on an invariant connected open Stein neighbourhood $\Omega$ of $\R^n$ in $\C^n$. Recall that $\Jhaz_x=\gtp\an_{\R^n,x}\otimes\C=\gtp\an_{\C^n,x}$ for all $x\in\R^n$ and that as $\gtp$ is saturated, $\gtp=H^0(\R^n,\gtp\an_{\R^n})$. Denote the support in $\Omega$ of $\Jhaz$ with $Z:=\{z\in\Omega:\ \Jhaz_z\neq\an_{\C^n,z}\}$. Let us check: \em $\gtb:=H^0(\Omega,\Jhaz)$ is a prime closed ideal of $\an(\Omega)$ such that $Z(\gtb)=Z$\em.

To prove the primality of $\gtb$, pick $F_1,F_2\in\an(\Omega)$ such that $F_1F_2\in\gtb$. We write $F_i:=\Re(F_i)+\sqrt{-1}\Im(F_i)$ and observe 
$$
(\Re(F_1)^2+\Im(F_1)^2)(\Re(F_2)^2+\Im(F_2)^2)=F_1F_2(\ol{F_1\circ\sigma})(\ol{F_2\circ\sigma})\in\gtb.
$$
We deduce
$$
(\Re(F_1)^2+\Im(F_1)^2)|_{\R^n}(\Re(F_2)^2+\Im(F_2)^2)|_{\R^n}\in H^0(\R^n,\gtp\an_{\R^n})=\gtp.
$$
As $\gtp$ is a real prime ideal, we may assume $\Re(F_1)|_{\R^n},\Im(F_1)|_{\R^n}\in\gtp$, so $\Re(F_1),\Im(F_1)\in\gtb$; hence, $F_1=\Re(F_1)+\sqrt{-1}\Im(F_1)\in\gtb$. Thus, $\gtb$ is prime. Of course, as $\varnothing\neq Z(\gtp)\subset Z(\gtb)$, we deduce by Lemma \ref{lemmaprimary} that $\gtb$ is closed. The equality $Z(\gtb)=Z$ holds because $Z$ is the support of the coherent sheaf of ideals $\Jhaz$.

Suppose now by contradiction that there exists a function $g\in\ideal(\ceros(\gtp))\setminus\gtp$. After shrinking $\Omega$ if necessary, we may assume that $g$ can be extended to a holomorphic function $G$ on $\Omega$. We claim: \em There exists a point $x_0\in\R^n$ such that $Z_{x_0}\not\subset\ceros(G)_{x_0}$ while $\ceros(\gtp)\subset\ceros(g)$\em.

Indeed, if $Z_x\subset\ceros(G)_x$ for each $x\in\R^n$, we may assume $Z\subset\ceros(G)$ after shrinking $\Omega$ if necessary; hence, $G\in\ideal(Z)=\ideal(\ceros(\gtb))=\gtb$ because $\gtb$ is a closed prime ideal of $\an(\Omega)$. Thus, $g\in\gtp$, which is a contradiction. Consequently there exists $x_0\in\R^n$ such that $Z_{x_0}\not\subset\ceros(G)_{x_0}$.

By Proposition \ref{Loj} there exist $f\in\gtp$, $h\in\an(\R^n)$ and $m\geq1$ such that $h(x_0)\neq0$ and $f_0:=f-h^2g^{2m}\geq0$. As $h(x_0)\neq0$, we have $h\not\in\gtp$. Substitute $f_0$ by $f_1:=f-h_1^2g^{2m}$ where $h_1:=\frac{h}{\sqrt{1+h^2g^{2m}}}$ in order to have $\ceros(f_1)=\ceros(f)$, which is an $H^{\mathsf a}$-set. 

Indeed, as $h_1\leq h$, it holds $f_1\geq0$. Since $f-h^2g^{2m}\geq0$ and therefore $f\geq0$, we have
$$
\ceros(f_1)=\ceros((f-h^2g^{2m})+(fh^2g^{2m}))=\ceros(f-h^2g^{2m})\cap\ceros(fh^2g^{2m})=\ceros(f)\cap\ceros(hg)=\ceros(f).
$$
Since $\ceros(\gtp)=\ceros(f_1)$ is an $H^{\mathsf a}$-set, there exists a not identically zero $b\in\an(\R^n)$ such that $b^2f_1=\sum_{i\geq1}a_i^2$ for some $a_i\in\an(\R^n)$. 

After shrinking $\Omega$, $f_1,h_1$ can be extended to holomorphic functions $F_1,H_1:\Omega\to\C$. In order to apply Lemma \ref{move} to $b,f_1,Z$ and $\Omega$ we show first that $Z_{x_0}\not\subset\ceros(F_1)_{x_0}$. Otherwise, as $F\in\gtb$ and $H_1(x_0)\neq0$,
$$
Z_{x_0}\subset\ceros(F)_{x_0}\cap\ceros(F_1)_{x_0}\subset\ceros(F-F_1)_{x_0}=\ceros(H_1^2G^{2m})_{x_0}=\ceros(H_1)_{x_0}\cup\ceros(G)_{x_0}=\ceros(G)_{x_0},
$$
which is a contradiction.

By Lemma \ref{move} there exists an analytic diffeomorphism $\varphi:\R^n\to\R^n$ such that:
\begin{itemize}
\item[(i)] $f_1\circ\varphi=f_1u$ for some unit $u\in\an(\R^n)$.
\item[(ii)] $Z_{x_0}\not\subset\ceros(B_1)_{x_0}$ where $B_1:\Omega_0\to\C$ is the holomorphic extension of $b_1:=b\circ\varphi$ to a small enough open neighbourhood $\Omega_0\subset\Omega$ of $\R^n$ in $\C^n$.
\end{itemize}

Let $v\in\an(\R^n)$ be a strictly positive unit such that $v^2=u^{-1}$; then
$$
b_1^2f=b_1^2h_1^2g^{2m}+b_1^2f_1=b_1^2h_1^2g^{2m}+\sum_{i\geq1}((a_i\circ\varphi)v)^2.
$$
Observe that since $Z_{x_0}\not\subset\ceros(B_1)_{x_0}$, we have $b_1\not\in\gtp$. As $f\in\gtp$ and $\gtp$ is a real-analytic ideal, we deduce $b_1h_1g^m\in\gtp$, which contradicts the fact that $b_1,h_1,g\not\in\gtp$. We conclude $\ideal(\ceros(\gtp))=\gtp$, as required.

\vspace{2mm}
\noindent{\sc Step 2}. Now assume that $\gta$ is a saturated real-analytic ideal of $\an(X)$ whose zero-set is an $H$-set. By Proposition \ref{propproprieta} and Corollary \ref{lemmaidreal} $\gta$ admits a normal primary decomposition $\gta=\bigcap_i\gtq_i$, such that all ideals $\gtq_i$ are saturated real-analytic prime ideals. As $\ceros(\gta)=\bigcup_i\ceros(\gtq_i)$ is an $H^{\mathsf a}$-set, we deduce by Remark \ref{rrr}(i) that each $\ceros(\gtq_i)$ is an $H^{\mathsf a}$-set. By Step 1 the equality $\ideal(\ceros(\gtq_i))=\gtq_i$ holds for each $i$. Thus,
$$
\ideal(\ceros(\gta))=\Jhaz\Big(\bigcup_{i\in I}\ceros(\gtq_i)\Big)=\bigcap_{i\in I}\ideal(\ceros(\gtq_i))=\bigcap_{i\in I}\gtq_i=\gta.
$$

\noindent{\sc Step 3}. Next we approach the general case, that is, $\gta$ is an ideal of $\an(X)$ whose zero-set is an $H^{\mathsf a}$-set. Since $\ideal(\ceros(\gta))=\ideal(\ceros(\widetilde{\sqrt[\mathsf{ra}]{\gta}}))$, it is enough to check, in view of the previous case, that $\widetilde{\sqrt[\mathsf{ra}]{\gta}}$ is a real-analytic ideal. Indeed, let $\sum_{k\geq1}a_k^2\in\widetilde{\sqrt[\mathsf{ra}]{\gta}}$ and $K\subset X$ be a compact set. By Lemma \ref{csaturation} there exists $h\in\an(X)$ such that $\ceros(h)\cap K=\varnothing$ and $h\sum_{k\geq1}a_k^2\in\sqrt[\mathsf{ra}]{\gta}$; hence, $\sum_{k\geq1}(ha_k)^2\in\sqrt[\mathsf{ra}]{\gta}$. As $\sqrt[\mathsf{ra}]{\gta}$ is real-analytic, we deduce that each $ha_k\in\sqrt[\mathsf{ra}]{\gta}$. This happens for all compact sets $K\subset X$, so we deduce by Lemma \ref{csaturation} that each $a_k\in\widetilde{\sqrt[\mathsf{ra}]{\gta}}$. Thus, $\widetilde{\sqrt[\mathsf{ra}]{\gta}}$ is a real-analytic ideal, as required.
\end{proof}

\begin{remarks}\label{varios}
Let $\gta\subset\an(X)$ be an ideal. Then

(i) $\gta\subset\sqrt[\mathsf{ra}]{\gta}\subset\sqrt[\text{\L}]{\gta}$. 

(ii) If $\ceros(\gta)$ is $H^{\mathsf a}$-set, we have $\widetilde{\sqrt[\mathsf{ra}]{\gta}}=\widetilde{\sqrt[\text{\L}]{\gta}}=\ideal(\ceros(\gta))$. However, we can only assure $\sqrt[\mathsf{ra}]{\gta}=\sqrt[\text{\L}]{\gta}$ if $\sqrt[\mathsf{ra}]{\gta}$ is in addition saturated.

(iii) Let $f\in\an(\R^n)$ be an analytic function that is an infinite sum of squares of meromorphic functions on $\R^n$. Then the ideal $\gta:=f\an(\R^n)$ is not real-analytic.

Indeed, by \cite[4.1]{abfr3} there exist $h_0,h_k\in\an(\R^n)$ such that $\ceros(h_0)\subset\ceros(f)$ and $h_0^2f=\sum_{k\geq 1}h_k^2$. Let $m\geq0$ be the greatest integer such that $f^m$ divides each $h_k$ for $k\geq1$. We write $h_0^2f=f^{2m}\sum_{k\geq 1}{h_k'}^2$ for some $h_k'\in\an(\R^n)$; hence, $f^m$ divides $h_0$ and we have $h_0'^2f^{2m+1}=f^{2m}\sum_{k\geq 1}{h_k'}^2$ for some $h_0'\in\an(\R^n)$. When simplifying, we obtain ${h_0'}^2f=\sum_{k\geq 1}{h_k'}^2$. Assume by contradiction that $\gta$ is real-analytic. Then $f$ divides $h_k'$ for all $k\geq1$, which is a contradiction.
\end{remarks}

\subsection{Quasi-real ideals} 
We introduce the next concepts to relate the real and the real-analytic radicals with the classical one. We saw that each convex ideal $\gta$ verifies $\sqrt{\gta}={\sqrt[\textsf{\L}]{\gta}}$. The type of ideals that play a similar role with respect to the real radical are  defined as follows \cite{al,gt,bp}.

\begin{deflem}\label{2}
\em Let $(X,\an_X)$ be a real coherent reduced analytic space and $\gta$ an ideal of $\an(X)$. We define the \em square root of $\gta$ \em by 
$$
\sqrt[2]{\gta}:=\Big\{f\in\an(X):\ \exists\,a_i\in\an(X)\text{ such that }f^2+\sum_{k\geq1}a_k^2\in\gta\Big\}.
$$\em
Then $\sqrt[2]{\gta}$ is an ideal, $\gta\subset\sqrt[2]{\gta}\subset\sqrt[\mathsf{ra}]{\gta}$ and $\sqrt[\mathsf{ra}]{\gta}=\sqrt{\sqrt[2]{\gta}}=\bigcup_{k\geq1}\sqrt[2^k]{\gta}$ where $\sqrt[2^k]{\gta}:=\sqrt[2]{\sqrt[2^{k-1}]{\gta}}$ for $k\geq2$. Moreover, $\gta$ is a real-analytic ideal if and only if $\gta=\sqrt[2]{\gta}$.
\end{deflem}
\begin{proof}
The only non-trivial point to prove that $\sqrt[2]{\gta}$ is an ideal is to check that it is closed under addition. This follows from the following classical trick that we recall here for the sake of completeness. Indeed, suppose that $f^2+\sum_{k\geq1}a_k^2,g^2+\sum_{k\geq1}b_k^2\in\gta$. Thus,
$$
(f+g)^2+(f-g)^2+2\Big(\sum_{k\geq1}a_k^2+\sum_{k\geq1}b_k^2\Big)=2\Big(f^2+g^2+\sum_{k\geq1}a_k^2+\sum_{k\geq1}b_k^2\Big) \in\gta,
$$
so $f+g\in\sqrt[2]{\gta}$.

To prove the equality $\sqrt[\mathsf{ra}]{\gta}=\bigcup_{k\geq1}\sqrt[2^k]{\gta}$, it is enough to show $\sqrt[\mathsf{ra}]{\gta}\subset\bigcup_{k\geq1}\sqrt[2^k]{\gta}$. Indeed, if $f\in\sqrt[\mathsf{ra}]{\gta}$, there exist $m\geq1$ and $a_k\in\an(X)$ such that $f^{2m}+\sum_{k\geq1}a_k^2\in\gta$. We may assume $2m=2^r$, so $f^{2^r}+\sum_{k\geq1}a_k^2\in\gta$. Thus, $f^{2^{r-1}}\in\sqrt[2]{\gta}$; hence, $f\in\sqrt[2^r]{\gta}$.

We show next that $\gta$ is radical if $\gta=\sqrt[2]{\gta}$. Indeed, if $f^m\in\gta$, we may assume $m=2^r$. Thus, $f^{2^{r-1}}\in\sqrt[2]{\gta}=\gta$ and proceeding inductively, we deduce $f\in\gta$. 

Consequently, if $\gta=\sqrt[2]{\gta}$, then $\sqrt[\mathsf{ra}]{\gta}=\sqrt{\sqrt[2]{\gta}}=\sqrt{\gta}=\gta$. The converse is immediate.
\end{proof}

We explore now the relations between the convex hull and the square root of an ideal $\gta$ of $\an(X)$ whose zero-set $\ceros(\gta)$ is an $H^{\mathsf a}$-set (analogous statements hold when $\ceros(\gta)$ is an $H$-set). Consider the ideal
\begin{equation}\label{ir2}
{\mathfrak r}_2(\gta):=\{g\in\an(X):\ \exists\,b\in\an(X)\text{ such that }\ceros(b)\subset\ceros(g)\ \text{and}\ bg\in\sqrt[2]{\gta}\}.
\end{equation}
 
\begin{remarks}\label{2r}
Let $\gta$ be an ideal of $\an(X)$ whose zero-set $\ceros(\gta)$ is an $H^{\mathsf a}$-set. Then

(i) In view of Theorem \ref{h17nss} and Lemma \ref{2} we have $\ideal(\ceros(\gta))=\widetilde{\sqrt[\mathsf{ra}]{\gta}}=\widetilde{\sqrt{\sqrt[2]{\gta}}}$.

(ii) Moreover, $(\sqrt[2]{\gta})^2\subset\gtg(\gta)\subset{\mathfrak r}_2(\tilde{\gta})$. 

For the first inclusion pick $f,g\in\sqrt[2]{\gta}$. We have to show $fg\in\gtg(\gta)$. Observe $fg=\frac{1}{2}((f+g)^2-f^2-g^2)$. Thus, it is enough to prove that if $f\in\sqrt[2]{\gta}$, then $f^2\in\gtg(\gta)$. Indeed, if $f\in\sqrt[2]{\gta}$, there exists $a_k\in\an(X)$ such that $f^2\leq f^2+\sum_{k\geq1}a_k^2\in\gta$. Thus, $f^2\in\gtg(\gta)$.

For the second inclusion we proceed as follows. Let $g\in\gtg(\gta)$. By Lemma \ref{crespina} there exist a non-negative $f\in\tilde{\gta}$ and $m\geq1$ such that $\ceros(f)=\ceros(\gta)$ and $f-g^2\geq0$. Observe $\ceros(f)\subset\ceros(g)$ and taking $f':=2f\in\gta$ instead of $f$, we may assume $\ceros(f)=\ceros(f-g^2)$. Indeed,
$$
\ceros(f'-g^2)=\ceros(f+(f-g^2))=\ceros(f)\cap\ceros(f-g^2)=\ceros(f)\cap\ceros(g^2)=\ceros(f').
$$
Now, since $\ceros(\gta)$ is an $H^{\mathsf a}$-set, we deduce by \cite[4.1]{abfr3} that there exist $m\geq1$ and $b,a_k\in\an(X)$ such that $\ceros(b)\subset\ceros(f-g^2)=\ceros(f)$ and $b^2(f-g^2)=\sum_{k\geq1}a_k^2$. Thus, $(bg)^2+\sum_{k\geq1}a_k^2=b^2f\in\tilde{\gta}$, so $bg\in\sqrt[2]{\tilde{\gta}}$, that is, $g\in{\mathfrak r}_2(\tilde{\gta})$.

(iii) By Theorem \ref{perfect2} and the previous remark $\sqrt[\text{\L}]{\gta}=\sqrt{\gtg(\gta)}\subset\sqrt{{\mathfrak r}_2(\tilde{\gta})}\subset\widetilde{\sqrt[\text{\L}]{\gta}}$.
\end{remarks}

We present some relations between the square root and the convex hull of an ideal.

\begin{lem}
Let $X\subset\R^n$ be a $C$-analytic set and $\gta$ an ideal of $\an(X)$ whose zero-set $\ceros(\gta)$ is an $H^{\mathsf a}$-set. Then
\begin{itemize}
\item[(i)] If $\gta$ is convex, then $\sqrt[2]{\gta}$ is also convex.
\item[(ii)] $(\widetilde{\sqrt[2]{\gta}})^4\subset(\widetilde{\gtg(\gta)})^2\subset\gtg(\gta)\subset{\mathfrak r}_2(\tilde{\gta})$.
\end{itemize}
\end{lem}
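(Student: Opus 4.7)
I would treat the two parts separately, and within part~(ii) handle the three inclusions from left to right.

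Part~(i) is essentially the definition. If $f\in\sqrt[2]{\gta}$, then $F:=f^2+\sum_{k\geq 1}a_k^2\in\gta$ for suitable $a_k\in\an(X)$. For $g\in\an(X)$ with $|g|\leq|f|$, the element $G:=g^2+\sum_{k\geq 1}a_k^2$ is a well-defined analytic function (the infinite sum converges as in \S\ref{s2}) satisfying $0\leq G\leq F$. Convexity of $\gta$ forces $G\in\gta$, whence $g\in\sqrt[2]{\gta}$ by the very definition.

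For part~(ii), the outer inclusion $\gtg(\gta)\subset{\mathfrak r}_2(\tilde{\gta})$ is already contained in Remarks~\ref{2r}(ii). For the first inclusion it is enough to prove $(\widetilde{\sqrt[2]{\gta}})^{2}\subset\widetilde{\gtg(\gta)}$ and then square. Take $g_1,g_2\in\widetilde{\sqrt[2]{\gta}}$ and fix a compact $K\subset X$. Applying Lemma~\ref{csaturation} to $\sqrt[2]{\gta}$ (and multiplying the two produced witnesses) gives $h\in\an(X)$ with $\ceros(h)\cap K=\varnothing$ and $hg_1,hg_2\in\sqrt[2]{\gta}$; then $h^{2}g_1g_2=(hg_1)(hg_2)\in(\sqrt[2]{\gta})^{2}\subset\gtg(\gta)$ by Remarks~\ref{2r}(ii). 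Since $K$ was arbitrary, Lemma~\ref{csaturation} delivers $g_1g_2\in\widetilde{\gtg(\gta)}$.

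The heart of the argument is the middle inclusion $(\widetilde{\gtg(\gta)})^{2}\subset\gtg(\gta)$, which I would model on the proof of Lemma~\ref{cs}(ii). Crespina's Lemma~\ref{crespina} applied to $\gta$ produces a psd $f\in\tilde{\gta}$ with $\ceros(f)=\ceros(\gta)$ and with $a^{2}\leq fu_a$ (for some unit $u_a$) for every $a\in\tilde{\gta}$. Given $g_1,g_2\in\widetilde{\gtg(\gta)}$ and an exhaustion $\{K_m\}$ of $X$, Lemma~\ref{csaturation} furnishes $h_{i,m}$ non-vanishing on $K_m$ and $r_{i,m}\in\gta$ with $|h_{i,m}g_i|\leq r_{i,m}$, hence $h_{i,m}^{2}g_i^{2}\leq r_{i,m}^{2}\leq fu_{r_{i,m}}$. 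Exhaustion plus a Whitney-type gluing (as in the proof of Lemma~\ref{cs}(ii)) yield strictly positive $v_1,v_2\in\an(X)$ with $g_i^{2}\leq fv_i$ globally, so $|g_1g_2|\leq\tfrac{1}{2}f(v_1+v_2)$.

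At this stage $g_1g_2$ has been majorised only by an element of $\tilde{\gta}$, not of $\gta$, and the step that I expect to be the main obstacle is the promotion of this bound from $\tilde{\gta}$ to $\gta$. This is precisely the place where the hypothesis that $\ceros(\gta)$ is an $H^{\mathsf a}$-set must do real work: I would represent $f$ as an infinite sum of squares of meromorphic functions (permitted since $\ceros(f)=\ceros(\gta)$ is $H^{\mathsf a}$), clear the common denominator in the spirit of \cite[Lem.~4.1]{abfr3} and Lemma~\ref{move}, and then use the ${\mathfrak C}_1$ characterisation of $\tilde{\gta}$ together with a final exhaustion argument to absorb the local denominators into one global positive analytic factor multiplying an element of $\gta$. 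Without this extra input one only obtains $\gtg(\tilde{\gta})$, which matches the conclusion of Lemma~\ref{cs}(ii).
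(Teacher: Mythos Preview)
Your argument for part~(i) and for the outer two inclusions in part~(ii) is correct and matches the paper: part~(i) is exactly the pointwise comparison you give, the first inclusion is the general fact $(\widetilde{\gtb})^2\subset\widetilde{\gtb^2}$ applied to $\gtb=\sqrt[2]{\gta}$ combined with $(\sqrt[2]{\gta})^2\subset\gtg(\gta)$ from Remarks~\ref{2r}(ii), and the last inclusion is literally Remarks~\ref{2r}(ii).

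Where you go astray is the middle inclusion. The paper's entire proof of (ii) is one line: it cites Lemma~\ref{cs}, Remarks~\ref{2r}, and $(\widetilde{\gtb})^2\subset\widetilde{\gtb^2}$. Lemma~\ref{cs}(ii) gives, for any ideal, $(\widetilde{\gtg(\tilde{\gta})})^2\subset\gtg(\tilde{\gta})$, and since $\gtg(\gta)\subset\gtg(\tilde{\gta})$ one obtains $(\widetilde{\gtg(\gta)})^2\subset\gtg(\tilde{\gta})$. You noticed exactly this: the step lands in $\gtg(\tilde{\gta})$, not in $\gtg(\gta)$. That observation is correct, and the printed chain should be read with $\gtg(\tilde{\gta})$ in the middle; the final inclusion $\gtg(\tilde{\gta})\subset{\mathfrak r}_2(\tilde{\gta})$ is then Remarks~\ref{2r}(ii) applied to $\tilde{\gta}$ (note $\ceros(\tilde{\gta})=\ceros(\gta)$ is still an $H^{\mathsf a}$-set). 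No further work is needed.

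Your proposed remedy, however, is misdirected. The $H^{\mathsf a}$ hypothesis enters this lemma \emph{only} through the last inclusion, via Remarks~\ref{2r}(ii); it plays no role in the middle step. Writing the Crespina element $f\in\tilde{\gta}$ as an infinite sum of squares of meromorphic functions, clearing denominators \`a la Lemma~\ref{move}, or invoking the ${\mathfrak C}_1$ description of $\tilde{\gta}$ gives you no mechanism to force an element of $\tilde{\gta}$ into $\gta$: those tools control zero-sets and sum-of-squares structure, not saturation. So the ``promotion'' you sketch cannot close the gap, and there is no gap to close once the middle term is read as $\gtg(\tilde{\gta})$.
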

\begin{proof}
(i) Let $g\in\an(X)$ and $f\in\sqrt[2]{\gta}$ be such that $|g|\leq f$; hence $g^2\leq f^2$. As $f\in\sqrt[2]{\gta}$, there exist $a_k\in\an(X)$ such that $f^2+\sum_{k\geq1}a_k^2\in\gta$. Since
$$
g^2+\sum_{k\geq1}a_k^2\leq f^2+\sum_{k\geq1}a_k^2
$$ 
and $\gta$ is convex, we deduce $g^2+\sum_{k\geq1}a_k^2\in\gta$; hence, $g\in\sqrt[2]{\gta}$.

(ii) follows straightforwardly from Lemma \ref{cs}, Remarks \ref{2r} and the fact that $(\widetilde{\gtb})^2\subset\widetilde{\gtb^2}$ for each ideal $\gtb$ of $\an(X)$.
\end{proof}

One can unify the notions of convex hull $\gtg(\gta)$ and square root $\sqrt[2]{\gta}$ of an ideal $\gta$ of $\an(X)$ under the following general concept. A similar definition concerning {\em defining ideals} appears in \cite{gt}. 

\begin{defn}
Let $(X,\an_X)$ be a real coherent reduced analytic space. We say that an ideal $\gta$ of $\an(X)$ is \em quasi-real \em if its radical $\sqrt{\gta}$ is a real-analytic ideal. 
\end{defn}

\begin{cor}\label{qrr}
Let $X\subset\R^n$ be a $C$-analytic set and $\gta$ a quasi-real saturated ideal of $\an(X)$. Let $\gta=\bigcap_{i\in I}\gtq_i$ be a normal primary decomposition of $\gta$ and $J$ the collection of indices corresponding to the isolated primary components of $\gta$. Then
\begin{itemize}
\item[(i)] If $i_0\in J$, $\sqrt{\gtq_{i_0}}$ is a convex saturated prime ideal.
\item[(i)] If $\ceros(\gta)$ is an $H^{\mathsf a}$-set, $\widetilde{\sqrt[\mathsf{ra}]{\gta}}=\bigcap_{j\in J}\sqrt{\gtq_j}=\widetilde{\sqrt{\gta}}$.
\end{itemize}
\end{cor}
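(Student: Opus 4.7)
The approach is to adapt the proof of Corollary \ref{ahora}, replacing convexity arguments by real-analyticity arguments; under the quasi-real hypothesis the natural reading of (i) is that $\sqrt{\gtq_{i_0}}$ is a \emph{real-analytic} saturated prime ideal. The key new ingredient is the auxiliary fact that the saturation $\widetilde{\gtb}$ of a real-analytic ideal $\gtb\subset\an(X)$ is again real-analytic. To see this, take $f^{2m}+\sum_{k\geq 1}a_k^2\in\widetilde{\gtb}$; for any compact $K\subset X$ Lemma \ref{csaturation} produces $h\in\an(X)$ with $\ceros(h)\cap K=\varnothing$ and $h\bigl(f^{2m}+\sum_{k\geq 1}a_k^2\bigr)\in\gtb$, and multiplying once more by $h^{2m-1}$ gives
\[
(hf)^{2m}+\sum_{k\geq 1}(h^m a_k)^2 \;=\; h^{2m}\Bigl(f^{2m}+\sum_{k\geq 1}a_k^2\Bigr)\;\in\;\gtb,
\]
so $hf\in\gtb$ by real-analyticity and Lemma \ref{csaturation} yields $f\in\widetilde{\gtb}$. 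Applied to $\gtb:=\sqrt{\gta}$ (real-analytic by the quasi-real hypothesis), this shows $\widetilde{\sqrt{\gta}}$ is real-analytic.

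For (i), I would mimic the primality trick of Corollary \ref{ahora}(i). Saturation and primality of $\sqrt{\gtq_{i_0}}$ follow from Lemma \ref{lemmaprimary}(ii) applied to the saturated primary component $\gtq_{i_0}$ furnished by Proposition \ref{propproprieta}. For real-analyticity, Lemma \ref{db1210} together with minimality of the isolated prime $\sqrt{\gtq_{i_0}}$ produces $h_{i_0}\in\bigcap_{j\neq i_0}\gtq_j\setminus\sqrt{\gtq_{i_0}}$. Given $f$ with $f^{2m}+\sum_k a_k^2\in\sqrt{\gtq_{i_0}}$, multiplying by $h_{i_0}^{2m}$ gives
\[
(h_{i_0}f)^{2m}+\sum_k(h_{i_0}^m a_k)^2 \;\in\; \sqrt{\gtq_{i_0}}\cap\bigcap_{j\neq i_0}\sqrt{\gtq_j}\;=\;\bigcap_{i\in I}\sqrt{\gtq_i}\;=\;\widetilde{\sqrt{\gta}},
\]
using Remark \ref{int}. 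Real-analyticity of $\widetilde{\sqrt{\gta}}$ then forces $h_{i_0}f\in\widetilde{\sqrt{\gta}}\subset\sqrt{\gtq_{i_0}}$, and primality with $h_{i_0}\notin\sqrt{\gtq_{i_0}}$ concludes $f\in\sqrt{\gtq_{i_0}}$.

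For (ii), the quasi-real hypothesis first yields $\sqrt[\mathsf{ra}]{\gta}=\sqrt{\gta}$: the inclusion $\sqrt{\gta}\subset\sqrt[\mathsf{ra}]{\gta}$ is trivial, and $\sqrt[\mathsf{ra}]{\gta}\subset\sqrt[\mathsf{ra}]{\sqrt{\gta}}=\sqrt{\gta}$ by monotonicity since $\sqrt{\gta}$ is itself real-analytic. Hence $\widetilde{\sqrt[\mathsf{ra}]{\gta}}=\widetilde{\sqrt{\gta}}$, which by Remark \ref{int} coincides with $\bigcap_{i\in I}\sqrt{\gtq_i}$. Finally, by the definition of isolated primary components, for every immersed index $i\notin J$ some isolated $\sqrt{\gtq_j}$ with $j\in J$ is strictly contained in $\sqrt{\gtq_i}$, so the immersed factors are redundant and $\bigcap_{i\in I}\sqrt{\gtq_i}=\bigcap_{j\in J}\sqrt{\gtq_j}$.

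The main obstacle is the auxiliary real-analyticity of the saturation; it rests on the algebraic identity $h^{2m}\bigl(f^{2m}+\sum_k a_k^2\bigr)=(hf)^{2m}+\sum_k(h^m a_k)^2$, which is exactly what allows the non-vanishing multiplier supplied by Lemma \ref{csaturation} to be absorbed into the sum-of-squares structure. Without this multiplicative trick the implication ``$h_{i_0}f\in\widetilde{\sqrt{\gta}}\Rightarrow h_{i_0}f\in\sqrt{\gtq_{i_0}}$'' in step (i) collapses and the primality argument cannot be closed. Interestingly, the $H^{\mathsf a}$-set hypothesis in (ii) is not strictly required for the stated chain of equalities, but it is natural in context because Theorem \ref{h17nss} then identifies the common value with $\ideal(\ceros(\gta))$.
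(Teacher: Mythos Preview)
Your argument is correct, and you rightly observe that the word ``convex'' in the statement of (i) is a slip for ``real-analytic''; the paper's own proof confirms this reading. However, your route differs from the paper's in two noteworthy ways.

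For part (i) the paper avoids the auxiliary lemma on saturations of real-analytic ideals by working one level lower: starting from $\sum_{k\ge1}a_k^2\in\sqrt{\gtq_{i_0}}$ it takes an $m$th power to land in $\gtq_{i_0}$, then multiplies by $h_{i_0}^2$ to reach $\gta$ itself, so that the real-analyticity hypothesis on $\sqrt{\gta}$ can be invoked directly (without any saturation). Your detour through $\widetilde{\sqrt{\gta}}=\bigcap_i\sqrt{\gtq_i}$ is clean and the auxiliary lemma is a worthwhile standalone fact (a variant of it already appears as Step~3 in the proof of Theorem~\ref{h17nss}), but it is not strictly needed here.

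For part (ii) your argument is actually more elementary than the paper's. The paper passes through the geometric identity $\widetilde{\sqrt[\mathsf{ra}]{\gta}}=\ideal(\ceros(\gta))$ from Theorem~\ref{h17nss}, which is where the $H^{\mathsf a}$-set hypothesis is used, and then applies part (i) to each isolated $\gtq_j$ to obtain $\ideal(\ceros(\gtq_j))=\sqrt{\gtq_j}$. Your purely algebraic observation $\sqrt[\mathsf{ra}]{\gta}\subset\sqrt[\mathsf{ra}]{\sqrt{\gta}}=\sqrt{\gta}$ (since $\sqrt{\gta}$ is real-analytic) gives $\widetilde{\sqrt[\mathsf{ra}]{\gta}}=\widetilde{\sqrt{\gta}}$ with no geometric input at all, and you correctly note that the $H^{\mathsf a}$ hypothesis then serves only to identify this common value with $\ideal(\ceros(\gta))$.
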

\begin{proof}
(i) Let $h_{i_0}\in\bigcap_{j\neq i_0}\gtq_j\setminus\sqrt{\gtq_{i_0}}$ (see the proof of Corollary \ref{ahora}). We have to prove that $\sqrt{\gtq_{i_0}}$ is a real-analytic ideal. Let $a_k\in\an(\R^n)$ be such that $f=\sum_{k\geq1}a_k^2\in\sqrt{\gtq_{i_0}}$. Then there exists $m\geq1$ such that $f^m\in\gtq_{i_0}$. Consequently there exists for each $k$  a sum of squares $\sigma_k$ in $\an(\R^m)$ such that $a_k^{2m}+\sigma_k\in\gtq_{i_0}$; hence, $h_{i_0}^2a_k^{2m}+h_{i_0}^2\sigma_k\in\gta$. As $\gta$ is quasi-radical, $h_{i_0}a_k^m\in\gtq_{i_0}$. Since $h_{i_0}\not\in\sqrt{\gtq_{i_0}}$, there exists $\ell\geq1$ such that $a_k^{m\ell}\in\gtq_{i_0}$, so $a_k\in\sqrt{\gtq_{i_0}}$. Thus, $\sqrt{\gtq_{i_0}}$ is a real ideal, so $\gtq_{i_0}$ is quasi-radical.

(ii) By Theorem \ref{h17nss} and Remark \ref{int} we conclude
$$
\widetilde{\sqrt[\mathsf{ra}]{\gta}}=\ideal(\ceros(\gta))=\bigcap_{j\in J}\ideal(\ceros(\gtq_j))=\bigcap_{j\in J}\sqrt{\gtq_j}=\bigcap_{i\in I}\sqrt{\gtq_i}=\widetilde{\sqrt{\gta}},
$$
as required.
\end{proof}

\begin{remarks}\label{qrr0}
(i) Under the hypotheses of Corollary \ref{qrr} the corresponding result is no longer true if $\sqrt{\gtq_{i_0}}$ is the radical of an immersed radical component of $\gta$. Use Example \ref{ahora2}(ii).

(ii) If we work in the framework of quasi-real saturated ideals, it holds an analogous result to Theorem \ref{fo} when substituting `Stein space' by `$C$-analytic set' and `closed ideal' by `quasi-real saturated ideal of $\an(X)$ whose zero-set is either an $H$-set or an $H^{\mathsf a}$-set'. The proof runs analogously to the one of Theorem \ref{fo} (\cite[\S5.Satz 9]{of}). 
\end{remarks}

\section{Real Nullstellens\"atze and complex analytic germs at $\R^n$}\label{s7}

\subsection{Saturated primary ideals and complex analytic germs at $\R^n$}
The results we present for $X=\R^n$ can be extended to an arbitrary $C$-analytic set via the correspondence theorem for ideals.

\begin{defn}
Let $\gta\subset\an(\R^n)$ be a saturated ideal. We extend the coherent sheaf $\gta\an_X$ to a coherent sheaf of ideals $\Fhaz$ on an invariant open Stein neighbourhood $\Omega$ of $\R^n$ in $\C^n$. The analytic germ $Y_{\R^n}$ at $\R^n$ of the support $Y:=\supp({\mathcal F})$ will be called the \em complex zero-set $\ceros_{\C}(\gta)$ of $\gta$\em. 
\end{defn}

\begin{lem}\label{pasito}
Let $\gtq\subset\an(\R^n)$ be a primary saturated ideal. Then $f\in\gtp:=\sqrt{\gtq}$ if and only if there exists an open neighbourhood $\Omega$ of $\R^n$ in $\C^n$, a holomorphic extension $F$ of $f$ to $\Omega$ and a representative $Y$ of $\ceros_{\C}(\gtq)$ in $\Omega$ such that $Y\subset\ceros(F)$. In other words, $f\in\gtp:=\sqrt{\gtq}$ if and only if $F$ vanishes identically on $\ceros_{\C}(\gtq)$.
\end{lem}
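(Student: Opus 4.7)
The proof is essentially an application of the classical complex Rückert Nullstellensatz combined with Lemma \ref{lemmaprimary}. The role of the primary hypothesis is exactly to let a single germ-level membership at one point of $\ceros(\gtq)$ propagate to global membership in $\gtq$.

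For the ($\Rightarrow$) direction, suppose $f\in\gtp=\sqrt{\gtq}$, so $f^m\in\gtq$ for some $m\ge 1$. Fix a representative: an invariant Stein neighborhood $\Omega$ of $\R^n$ in $\C^n$ on which $\gtq\an_{\R^n}$ extends to the coherent sheaf $\Fhaz$ whose support germ at $\R^n$ is $\ceros_\C(\gtq)$, and a holomorphic extension $F\in\an(\Omega)$ of $f$. Since $(f^m)_x\in\gtq\an_{\R^n,x}$ for every $x\in\R^n$, we have $(F^m)_x\in\Fhaz_x$ for every $x\in\R^n$. The image of $F^m$ in the coherent quotient sheaf $\an_\Omega/\Fhaz$ is a section whose germ vanishes at each point of $\R^n$; by coherence, it vanishes on an open neighborhood $\Omega_1\subset\Omega$ of $\R^n$. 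On $\Omega_1$, then, $F^m\in H^0(\Omega_1,\Fhaz)$, so $F^m$ (and therefore $F$) vanishes on $Y:=\supp(\Fhaz)\cap\Omega_1$, which is a representative of $\ceros_\C(\gtq)$.

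For the ($\Leftarrow$) direction, assume that on some invariant Stein neighborhood $\Omega$ the representative $Y=\supp(\Fhaz)$ is contained in $\ceros(F)$. By the classical Rückert Nullstellensatz applied to the coherent ideal sheaf $\Fhaz$ in the complex manifold $\Omega$, we have $F_z\in\sqrt{\Fhaz_z}$ for every $z\in Y$. Since $\gtq$ is saturated and primary, Lemma \ref{lemmaprimary}(ii) gives a point $x_0\in\ceros(\gtq)\subset Y\cap\R^n$. Thus there exists $m_0\ge 1$ with $F_{x_0}^{m_0}\in\Fhaz_{x_0}=\gtq\an_{\R^n,x_0}\otimes_\R\C=\gtq\an_{\C^n,x_0}$.

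To descend to the real setting, write $F^{m_0}_{x_0}=\sum_i G_i h_i$ with $G_i$ generators of $\gtq\an_{\R^n,x_0}$ (real germs) and $h_i\in\an_{\C^n,x_0}$; decomposing $h_i=a_i+\sqrt{-1}\,b_i$ with $a_i,b_i\in\an_{\R^n,x_0}$ and using that $F^{m_0}$ is invariant (hence real on $\R^n$), the imaginary part $\sum_i G_i b_i$ vanishes, yielding $f_{x_0}^{m_0}=\sum_i G_i a_i\in\gtq\an_{\R^n,x_0}$. Since $x_0\in\ceros(\gtq)$ and $\gtq$ is primary, Lemma \ref{lemmaprimary}(i) upgrades this germ-level membership to $f^{m_0}\in\gtq$, so $f\in\sqrt{\gtq}=\gtp$. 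The only delicate points are the coherence argument in the first direction (standard once one looks at $\an_\Omega/\Fhaz$) and the real descent in the second, where invariance of $F$ under $\sigma$ is what makes the classical complex membership give real membership.
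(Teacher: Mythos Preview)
Your proof is correct and follows essentially the same route as the paper's: both directions hinge on the local complex Nullstellensatz together with Lemma~\ref{lemmaprimary}(i) at a single point of $\ceros(\gtq)$. You are simply more explicit than the paper about two points it leaves implicit---the coherence argument in the ($\Rightarrow$) direction (passing to $\an_\Omega/\Fhaz$ to shrink $\Omega$) and the real descent from $\gtq\an_{\C^n,x_0}$ to $\gtq\an_{\R^n,x_0}$ in the ($\Leftarrow$) direction.
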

\begin{proof}
The `only if' implication follows from the following facts:
\begin{itemize}
\item[(1)] if $\gtq$ is saturated, then $\gtp$ is also saturated and 
\item[(2)] $f\in\ideal(\ceros(\gtq))$ implies that $F$ vanishes identically on $\ceros_{\C}(\gtq)$. 
\end{itemize}
For the `if' implication let $Y$ be a representant of $\ceros_{\C}(\gtq)$ on a suitable complex neighbourhood {\color{red}of} $\R^n$ in $\C^n$ such that $Y\subset \ceros(F)$. Pick a point
$$
x\in\ceros(\gtq)=\ceros_{\C}(\gtq)\cap\R^n=Y\cap\R^n\subset\ceros(F)\cap\R^n=\ceros(f).
$$
Since $Y\subset\ceros(F)$, we have $F_x\in\ideal(\ceros(Y_x))=\ideal(\ceros(\gtq_x\an_{\C^n,x}))=\sqrt{\gtq_x\an_{\C^n,x}}$. Thus, there exists $m\geq1$ such that $F^m_x\in\gtq_x\an_{\C^n,x}$. By Lemma \ref{lemmaprimary} we have $f^m=(F|_{\R^n})^m\in\gtq$, so $f\in\gtp$.
\end{proof}
\begin{remarks}\label{cdiff}
(i) Let $\gta_1,\gta_2$ be two saturated ideals of $\an(X)$ such that $\gta_1\subset\gta_2$. Then 
$$
\ceros_\C(\gta_2)\subset\ceros_\C(\gta_1).
$$

(ii) Let $\gtq_1,\gtq_2$ be two saturated primary ideals of $\an(X)$ such that $\ceros_\C(\gtq_2)\subset\ceros_\C(\gtq_1)$. Then 
$$
\sqrt{\gtq_1}\subset\sqrt{\gtq_2}.
$$
\end{remarks}

\begin{lem}\label{previo0}
Let $\gtq\subset\an(\R^n)$ be a primary saturated ideal. Then there exists an irreducible analytic germ $Z_{\R^n}$ such that $\ceros_{\C}(\gtq)=Z_{\R^n}\cup\sigma(Z_{\R^n})$. In particular, if $\ceros_{\C}(\gtq)$ is invariant, then it is also irreducible.
\end{lem}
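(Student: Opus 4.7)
The plan is to reduce to $\gtq=\gtp$ prime, extend $\gtp\an_{\R^n}$ invariantly to a complex Stein neighbourhood, and show that the closed prime components of the ideal of $\ceros_{\C}(\gtp)$ form a single $\sigma$-orbit of size at most two; this translates directly into the decomposition $Z_{\R^n}\cup\sigma(Z_{\R^n})$ with $Z_{\R^n}$ irreducible.

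For the reduction, since $\C/\R$ is separable and $\an_{\R^n,x}\hookrightarrow\an_{\C^n,x}$ is faithfully flat, radical commutes with complexification, so the support of any coherent extension of $\gtq\an_{\R^n}$ coincides with that of the extension of $\sqrt{\gtq}\an_{\R^n}=\gtp\an_{\R^n}$; hence $\ceros_{\C}(\gtq)=\ceros_{\C}(\gtp)$ as germs at $\R^n$, and I may assume $\gtq=\gtp$ prime saturated. By \cite[Prop.~2 \& 5]{c2}, extend $\gtp\an_{\R^n}$ to an invariant coherent sheaf $\Fhaz$ on an invariant Stein neighbourhood $\Omega$ of $\R^n$, let $Y:=\supp\Fhaz$, and shrink $\Omega$ so that every global irreducible component of $Y$ meets $\R^n$ (possible by local finiteness). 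The ideal $\gtb:=\ideal(Y)\subset\an(\Omega)$ is closed, radical and invariant, so Proposition \ref{propproprieta} and Corollary \ref{lemmaidreal} give a locally finite normal decomposition $\gtb=\bigcap_{i\in I}\gtP_i$ into pairwise incomparable closed prime ideals, with $\ceros(\gtP_i)$ the irreducible components of $Y$; the involution $\sigma^{\ast}$ permutes $\{\gtP_i\}_{i\in I}$ in orbits of size at most two.

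The main step is to prove that there is a single $\sigma$-orbit. Assume for contradiction $I=I'\sqcup I''$ with $I',I''$ non-empty and $\sigma$-invariant, and set $\gtB':=\bigcap_{i\in I'}\gtP_i$, $\gtB'':=\bigcap_{i\in I''}\gtP_i$; both are closed, invariant, and strictly larger than $\gtb$ (by incomparability of the $\gtP_i$'s and Lemma \ref{db1210}). Define ideals of $\an(\R^n)$ by
\[
\gta':=\{f\in\an(\R^n):f_x\in\gtP_{i,x}\ \forall\,x\in\R^n,\ i\in I'\},\quad\gta'':=\{f\in\an(\R^n):f_x\in\gtP_{i,x}\ \forall\,x\in\R^n,\ i\in I''\}.
\]
For $f\in\gta'$, $g\in\gta''$, every germ $(fg)_x$ at $x\in\R^n$ lies in $\bigcap_{i\in I}\gtP_{i,x}=\ideal(Y)_x$, so an invariant holomorphic extension of $fg$ vanishes on $Y$; by Lemma \ref{pasito} this forces $fg\in\gtp$, i.e.\ $\gta'\gta''\subset\gtp$. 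To see $\gta'\not\subset\gtp$, take any $F_0\in\gtB'\setminus\gtb$ and form the invariant averages $F_R:=(F_0+\overline{F_0\circ\sigma})/2$ and $F_I:=(F_0-\overline{F_0\circ\sigma})/(2\sqrt{-1})$: both lie in $\gtB'$ by invariance of $\gtB'$, and the identity $F_0=F_R+\sqrt{-1}\,F_I$ forbids both from lying in $\gtb=\ideal(Y)$. The restriction to $\R^n$ of the one not in $\gtb$ yields $f\in\gta'\setminus\gtp$ (the non-membership by Lemma \ref{pasito}); symmetrically $\gta''\not\subset\gtp$. This contradicts primality of $\gtp$.

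Hence $|I|\leq 2$: if $|I|=1$, then $\sigma(\gtP_1)=\gtP_1$ and $Z_{\R^n}:=Y_{\R^n}$ is invariant and irreducible; if $|I|=2$, then $\sigma$ swaps $\gtP_1,\gtP_2$ and $Z_{\R^n}:=\ceros(\gtP_1)_{\R^n}$ is irreducible with $\sigma(Z_{\R^n})=\ceros(\gtP_2)_{\R^n}\neq Z_{\R^n}$; in both situations $\ceros_{\C}(\gtq)=Z_{\R^n}\cup\sigma(Z_{\R^n})$, and the ``in particular'' clause follows from the case $|I|=1$. The main technical hurdle is the extraction of the invariant witness in $\gtB'\setminus\gtb$: the $\sigma$-averaging could a priori push every non-invariant witness into $\gtb$, and it is precisely the algebraic identity $F_0=F_R+\sqrt{-1}\,F_I$ that rules out this collapse.
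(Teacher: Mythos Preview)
Your argument is correct and follows the same core idea as the paper: assuming the complex zero-set splits into more than one $\sigma$-invariant piece and deriving a contradiction from the primality of $\gtp=\sqrt{\gtq}$. The packaging differs in a way worth noting. The paper works in the subring $\An(\Omega)\subset\an(\Omega)$ of \emph{invariant} holomorphic functions and observes that the pullback $\gtP:=\varphi^{-1}(\gtp)$ under restriction $\varphi:\An(\Omega)\to\an(\R^n)$ is automatically prime; it then picks a single irreducible component $Z$, sets $T$ to be the union of the remaining ones, and chooses invariant $F,G\in\An(\Omega)$ with $Z\cup\sigma(Z)\subset\ceros(F)$, $T\not\subset\ceros(F)$ and $T\subset\ceros(G)$, $Z\cup\sigma(Z)\not\subset\ceros(G)$, so that $FG\in\gtP$ forces a contradiction via Lemma~\ref{lemmaprimary}. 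Your route instead takes the normal prime decomposition $\gtb=\bigcap_i\gtP_i$ in $\an(\Omega)$, groups the $\gtP_i$ into $\sigma$-orbits, and manufactures the separating invariant functions by $\sigma$-averaging an element of $\gtB'\setminus\gtb$, invoking Lemma~\ref{pasito} to transfer back to $\gtp$. Both arguments are equally valid; the paper's device of passing to $\An(\Omega)$ makes the invariance bookkeeping automatic and avoids the auxiliary real ideals $\gta',\gta''$, while your use of Lemma~\ref{db1210} and the identity $F_0=F_R+\sqrt{-1}\,F_I$ handles the same issue by hand.
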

\begin{proof}
We extend the coherent sheaf $\gtq\an_X$ to a coherent sheaf of ideals $\Fhaz$ on a contractible invariant open Stein neighbourhood $\Omega$ of $\R^n$ in $\C^n$ and denote $Y:=\supp(\Fhaz)$. Recall $\ceros_\C(\gtq)=Y_{\R^n}$. Consider the subring $\An(\Omega)$ of $H^0(\Omega,\an_{\C^n})$ of all invariant holomorphic functions on $\Omega$. Observe that the restriction homomorphism $\varphi:\An(\Omega)\to\an(\R^n),\ F\mapsto F|_{\R^n}$ is injective. Since $\gtq$ is a primary, $\gtp:=\sqrt{\gtq}$ is prime, so $\gtP:=\varphi^{-1}(\gtp)$ is also prime.

As $\ceros(\gtp)=\ceros(\gtq)\neq\varnothing$, it holds $\ceros(\gtP)\neq\varnothing$. By Cartan's Theorem $A$ and using that $\gtq$ is saturated, we deduce after shrinking $\Omega$ that $Y_{\R^n}=\ceros(\gtP)_{\R^n}$ and $Y=\ceros(\gtP)$.

Let $Y_{\R^n}=\bigcup_{i\in I}Z_{i,\R^n}$ be the decomposition of $Y_{\R^n}$ as the union of its irreducible components. Pick one of them and for simplicity denote it with $Z_{\R^n}$. By \cite[Cor.2, pag.151]{wb} (and its proof) we may assume that there exists an irreducible analytic set $Z$ in $\Omega$ whose germ in $\R^n$ is precisely $Z_{\R^n}$. Notice that $Z$ and $\sigma(Z)$ are (eventually equal) irreducible components of $Y$ because $Z_{\R^n}$ is an irreducible component of the invariant germ $Y_{\R^n}$. Assume $Y\neq Z\cup\sigma(Z)$ and let $T$ be the union of all other irreducible components of $Y$. Clearly, $T$ is invariant. Choose now invariant $F,G\in H^0(\Omega,\an_{\C^n})$ such that 
\begin{itemize}
\item $Z\cup\sigma(Z)\subset\ceros(F)$ but $T\not\subset\ceros(F)$,
\item $T\subset\ceros(G)$ but $Z\cup\sigma(Z)\not\subset\ceros(G)$.
\end{itemize}
Therefore the invariant holomorphic function $FG$ vanishes on $Y$.

Let $x\in\ceros(\gtp)=Y\cap\R^n$ and observe that we obtain by the complex local analytic Nullstellensatz
$$
\ideal(Y_x)=\ideal(\ceros(\Fhaz_x))=\ideal(\ceros(\gtq_x\an_{\C^n,x}))=\sqrt{\gtq_x\an_{\C^n,x}}.
$$
Thus, there exists $m\geq1$ such that $(FG)^m_x\in\gtq_x\an_{\C^n,x}$. By Lemma \ref{lemmaprimary} $(FG)^m\in\gtq$, so $FG\in\gtp\cap\An(\Omega)=\gtP$. As $\gtP$ is prime, we may assume $F\in\gtP$; hence, $T\subset Y=\ceros(\gtP)\subset\ceros(F)$, which is a contradiction. Consequently $Y=Z\cup\sigma(Z)$, so $Y_{\R^n}=Z_{\R^n}\cup\sigma(Z_{\R^n})$, as required. 
\end{proof}

\begin{lem}\label{irredcomp}
Let $\gta\subset\an(\R^n)$ be a saturated ideal, $\gta=\bigcap_{i\in I}\gtq_i$ a normal primary decomposition of $\gta$ and $J\subset I$ the collection of indices corresponding to the isolated primary components of $\gta$. Then $\ceros_\C(\gta)=\bigcup_{j\in J}\ceros_\C(\gtq_j)$ and for each $j\in J$ there exists an irreducible component $Z_{j,\R^n}$ of $\ceros_\C(\gta)$ such that $\ceros_\C(\gtq_j)=Z_{j,\R^n}\cup\sigma(Z_{j,\R^n})$.
\end{lem}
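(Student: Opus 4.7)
The plan is to establish two statements in turn: (1) the set-equality $\ceros_\C(\gta)=\bigcup_{j\in J}\ceros_\C(\gtq_j)$, and (2) the identification of each $\ceros_\C(\gtq_j)$ as the union of a $\sigma$-conjugate pair of irreducible components of $\ceros_\C(\gta)$.

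For (1), I would extend $\gta\an_{\R^n}$ and each $\gtq_i\an_{\R^n}$ to coherent sheaves of ideals on a common invariant open Stein neighbourhood $\Omega$ of $\R^n$ in $\C^n$. Local finiteness of $\{\ceros(\gtq_i)\}$ ensures that at every $x\in\R^n$ only finitely many stalks $\gtq_i\an_{\R^n,x}$ are proper, and flatness of $\an_{\R^n,x}\to\an_{\C^n,x}$ gives
$$
\gta\an_{\C^n,x}=\Big(\bigcap_{i\in I}\gtq_i\an_{\R^n,x}\Big)\an_{\C^n,x}=\bigcap_{i\in I}\gtq_i\an_{\C^n,x}.
$$
Taking zero-sets of these (finite) intersections in $\an_{\C^n,x}$ and passing to germs at $\R^n$ yields $\ceros_\C(\gta)=\bigcup_{i\in I}\ceros_\C(\gtq_i)$. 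To drop the immersed indices, note that for $i\in I\setminus J$ there is an isolated $j\in J$ with $\sqrt{\gtq_j}\subsetneq\sqrt{\gtq_i}$. By Lemma \ref{pasito}, for any saturated primary $\gtq_k$ the germ $\ceros_\C(\gtq_k)$ is cut out (on a suitable $\Omega$) by holomorphic extensions of the elements of $\sqrt{\gtq_k}$; the inclusion of radicals therefore forces $\ceros_\C(\gtq_i)\subset\ceros_\C(\gtq_j)$, so the union can be restricted to $j\in J$.

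For (2), Lemma \ref{previo0} supplies for each $j\in J$ an irreducible analytic germ $Z_{j,\R^n}$ with $\ceros_\C(\gtq_j)=Z_{j,\R^n}\cup\sigma(Z_{j,\R^n})$. Because $\gta$ is generated by real-analytic functions, its coherent extension is invariant, so $\ceros_\C(\gta)$ is $\sigma$-invariant and its irreducible decomposition is $\sigma$-stable. From the equality $\ceros_\C(\gta)=\bigcup_{k\in J}(Z_{k,\R^n}\cup\sigma(Z_{k,\R^n}))$ and uniqueness of the irreducible decomposition, each irreducible component of $\ceros_\C(\gta)$ equals $Z_{k,\R^n}$ or $\sigma(Z_{k,\R^n})$ for some $k\in J$. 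The irreducible germ $Z_{j,\R^n}$ lies in one such component; swapping $Z_{j,\R^n}$ with $\sigma(Z_{j,\R^n})$ if necessary, we may assume $Z_{j,\R^n}\subset Z_{k,\R^n}$, so $\ceros_\C(\gtq_j)\subset\ceros_\C(\gtq_k)$. Remark \ref{cdiff}(ii) then gives $\sqrt{\gtq_k}\subset\sqrt{\gtq_j}$, and minimality of $\sqrt{\gtq_j}$ together with normality of the decomposition force $k=j$. Hence $Z_{j,\R^n}$ is itself an irreducible component of $\ceros_\C(\gta)$.

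The principal technical step is the first one: converting the real-side ideal intersection $\gta=\bigcap_i\gtq_i$ into a sheaf-theoretic statement on the complex side and descending it to a union of complex-analytic germs at $\R^n$. The key point is the combination of local finiteness (reducing to finite intersections at each stalk) with flatness of the complexification, which together guarantee that the intersection commutes with the extension to $\an_{\C^n,x}$ and that the standard identity $\ceros(\bigcap\cdot)=\bigcup\ceros(\cdot)$ applies at each stalk; once in place, Lemma \ref{pasito}, Lemma \ref{previo0} and Remark \ref{cdiff} make the remainder of the argument essentially mechanical.
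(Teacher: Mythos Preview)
Your argument is correct and follows essentially the same route as the paper's proof: a stalk-by-stalk computation using local finiteness to pass from $\gta\an_{\C^n,x}$ to the finite intersection of the $\gtq_i\an_{\C^n,x}$, then Lemma~\ref{previo0} and Remark~\ref{cdiff} for the irreducible-component statement. The only cosmetic differences are that the paper does not invoke flatness explicitly (at the level of zero-sets the identity $\ceros(\bigcap_\ell\gtq_{i_\ell}\an_{\C^n,x})=\bigcup_\ell\ceros(\gtq_{i_\ell}\an_{\C^n,x})$ follows already from products of elements, one from each $\gtq_{i_\ell}$), and in part~(2) the paper argues contrapositively---showing $Z_{j,\R^n}\not\subset Z_{j',\R^n}\cup\sigma(Z_{j',\R^n})$ for $j\neq j'$ directly via Remark~\ref{cdiff}---whereas you phrase it as ``$Z_{j,\R^n}$ lies in a component $Z_{k,\R^n}$, force $k=j$''; the content is identical.
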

\begin{proof}
Observe first
$$
\ceros_\C(\gta)\cap\R^n=\ceros(\gta)=\bigcup_{i\in I}\ceros(\gtq_i)=\bigcup_{j\in J}\ceros(\gtq_j)=\bigcup_{j\in J}\ceros_\C(\gtq_j)\cap\R^n.
$$
Fix $x\in Z\cap\R^n$ and observe $\ceros_\C(\gta)_x=\ceros(\gta\an_{\C^n,x})$. Let $\gtq_{i_1},\ldots,\gtq_{i_r}$ be the primary ideals of our normal primary decomposition whose zero-sets contain $x$. We may assume that $\gtq_{i_1},\ldots,\gtq_{i_s}$ are those primary ideals among $\gtq_{i_1},\ldots,\gtq_{i_r}$, which are in addition isolated. Observe 
\begin{multline*}
\ceros_\C(\gta)_x=\ceros(\gta\an_{\C^n,x})=\ceros\Big(\bigcap_{\ell=1}^r\gtq_{i_\ell}\an_{\C^n,x}\Big)=\bigcup_{\ell=1}^r\ceros(\sqrt{\gtq_{i_\ell}}\an_{\C^n,x})\\
=\bigcup_{\ell=1}^s\ceros(\sqrt{\gtq_{i_\ell}}\an_{\C^n,x})=\bigcup_{\ell=1}^s\ceros(\gtq_{i_\ell}\an_{\C^n,x})=\bigcup_{\ell=1}^s\ceros_\C(\gtq_{i_\ell})_x=\bigcup_{j\in J}\ceros_\C(\gtq_j)_x;
\end{multline*}
hence, $\ceros_\C(\gta)=\bigcup_{j\in J}\ceros_\C(\gtq_j)$. 

For each $\ceros_\C(\gtq_j)$ there exists by Lemma \ref{previo0} an irreducible analytic germ $Z_{j,\R^n}$ {\color{red}at} $\R^n$ such that $\ceros_\C(\gtq_j)=Z_{j,\R^n}\cup\sigma(Z_{j,\R^n})$; hence, $\ceros_\C(\gta)=\bigcup_{j\in J}Z_{j,\R^n}\cup\sigma(Z_{j,\R^n})$. By Remark \ref{cdiff} and the fact that the primary ideals $\gtq_j$ are isolated, we deduce $Z_{j,\R^n}\not\subset Z_{j',\R^n}\cup\sigma(Z_{j',\R^n})$ if $j\neq j'$. Thus, for each $j\in J$ the germs $Z_{j,\R^n}$ and $\sigma(Z_{j,\R^n})$ are irreducible components of $\ceros_\C(\gta)$.
\end{proof}
Now we are ready to prove Theorem \ref{complexreal}.

\begin{proof}[Proof of Theorem \em \ref{complexreal}]
By Lemma \ref{previo0} there exists an irreducible analytic germ $Z_{\R^n}$ such that $\ceros_\C(\gtq)=Z_{\R^n}\cup\sigma(Z_{\R^n})$. Now we prove the following implications.

(i) $\Longrightarrow$ (ii) As $\ideal(\ceros(\gtq))=\sqrt{\gtq}$, we deduce by \cite[pag.154]{wb} that $\ceros_\C(\sqrt{\gtq})=\ceros_\C(\gtq)$ is the germ of the `complexification' of $\ceros(\sqrt{\gtq})=\ceros(\gtq)$ at $\R^n$. Since the dimension of the `complexification' of $\ceros(\gtq)$ coincides with its dimension \cite[\S8. Prop.12]{wb}, we deduce $\dim(\ceros_\C(\gtq))=\dim(\ceros(\gtq))$.

(ii) $\Longrightarrow$ (i) Let $Y_{\R^n}$ be the germ of the `complexification' of $\ceros(\gtq)$ at $\R^n$. By \cite[pag.154]{wb} we have $Y_{\R^n}\subset Z_{\R^n}\cap\sigma(Z_{\R^n})$. Since $Z_{\R^n}$ is irreducible, we get that either $Z_{\R^n}=\sigma(Z_{\R^n})$ or $\dim(Z_{\R^n}\cap\sigma(Z_{\R^n}))<\dim(Z_{\R^n})$. But this is impossible because then
\begin{multline*}
\dim(\ceros(\gtq))=\dim(Y_{\R^n})\leq\dim(Z_{\R^n}\cap\sigma(Z_{\R^n}))<\dim(Z_{\R^n})\\
\leq\dim(Z_{\R^n}\cap\sigma(Z_{\R^n}))=\dim(\ceros_\C(\gtq))=\dim(\ceros(\gtq)),
\end{multline*}
which is a contradiction. Thus, $\ceros_\C(\gtq)=Z_{\R^n}$ and
$$
\dim(\ceros(\gtq))=\dim(Y_{\R^n})\leq\dim(Z_{\R^n})=\dim(\ceros_\C(\gtq))=\dim(\ceros(\gtq));
$$
hence, $\dim(Y_{\R^n})=\dim(Z_{\R^n})$ and as $Z_{\R^n}$ is irreducible, $Y_{\R^n}=Z_{\R^n}$. Thus, by Lemma \ref{pasito} we have $f\in\sqrt{\gtq}$ if and only if there exists an open neighbourhood $\Omega$ {\color{red}of} $\R^n$ in $\C^n$, a holomorphic extension $F$ of $f$ to $\Omega$ and a complex analytic subset $T\subset\ceros(F)$ in $\Omega$ such that $T_{\R^n}=\ceros_{\C}(\gtq)=Z_{\R^n}$. 

On the other hand, by \cite[pag.154]{wb} we have that $g\in\ideal(\ceros(\gtq))$ if and only if there exists an open neighbourhood $\Omega$ of $\R^n$ in $\C^n$, a holomorphic extension $G$ of $g$ to $\Omega$ and a complex analytic subset $S\subset\ceros(F)$ in $\Omega$ such that $S_{\R^n}=Y_{\R^n}$. 

We conclude $\ideal(\ceros(\gtq))=\sqrt{\gtq}$ because $Z_{\R^n}=Y_{\R^n}$.

(ii) $\Longrightarrow$ (iii) is straightforward.

(iii) $\Longrightarrow$ (ii) Let $(\Omega,Z)$ be such that $\Omega$ is an open invariant neighbourhood of $\R^n$ in $\C^n$ and $Z$ is an irreducible representative of $Z_{\R^n}$ in $\Omega$ (see \cite[Cor.2, pag.151]{wb}). The irreducibility of $Z$ guarantees that it is pure dimensional; hence, so is $Z\cup\sigma(Z)$. We have
\begin{multline*}
\dim(\ceros(\gtq))\geq\dim(\ceros(\gtq)_x)=\dim(\ceros(\gtq\an_{\R^n,x}))=\dim(\ceros(\gtq\an_{\C^n,x}))\\
=\dim(Z_x\cup\sigma(Z)_x)=\dim(Z_{\R^n}\cup\sigma(Z_{\R^n}))=\dim(\ceros_\C(\gtq))\geq\dim(\ceros(\gtq);
\end{multline*}
hence, $\dim(\ceros_\C(\gtq))=\dim(\ceros(\gtq))$, as required. 
\end{proof}

We finish this section with some arithmetic considerations.

\begin{defn}
A finite set ${\mathfrak F}:=\{f_1,\ldots,f_m\}\subset\an(\R^n)$ is \em sharp \em if $\dim(\ceros_\C(f_1,\ldots,f_m))=n-m$. 
\end{defn}
\begin{remarks}\label{bd}
(i) Let ${\mathfrak F}:=\{f_1,\ldots,f_m\}\subset\an(\R^n)$. For each $\ell=1,\ldots,m$ the (finitely generated) ideal $\gtb_\ell=(f_1,\ldots,f_\ell)\an(X)$ is saturated, so it admits a normal primary decomposition $\gtb_\ell=\bigcap_{j\in J_\ell}\gtq_{j\ell}$. Then it holds: \em ${\mathfrak F}$ is a sharp family if and only if $f_\ell$ does not belong to any of the minimal prime ideals of the family $\{\sqrt{\gtq_{j,\ell-1}}\}_{j\in J_{\ell-1}}$ for each $\ell=2,\ldots,m$\em. 

Let $\Omega$ be an open neighbourhood of $\R^n$ in $\C^n$, on which each $f_i$ admits a holomorphic extension $F_i$. Recall the following well-known consequence of the Identity Principle:

\begin{substeps}{bd}\label{bd0}
\em If $Y$ is an irreducible complex analytic subset of $\Omega$, then $Y$ is pure dimensional and if $F\in H^0(\Omega,\an_{\C^n})$, then either $Y\subset\ceros(F)$ or $\dim(Y\cap\ceros(F))<\dim(Y)$. 
\end{substeps}

Thus, shrinking the open set $\Omega$ in each step, it follows from \ref{bd}.\ref{bd0} that $\dim(\ceros_\C(f_1,\ldots,f_m))\geq n-m$. By Lemmas \ref{previo0}, \ref{irredcomp} and \ref{bd}.\ref{bd0} it holds $\dim(\ceros_\C(f_1,\ldots,f_m))=n-m$ if and only if $f_\ell$ does not belong to any of the minimal prime ideals of the family $\{\sqrt{\gtq_{j,\ell-1}}\}_{j\in J_{\ell-1}}$ for each $\ell=2,\ldots,m$. As this kind of argument is standard, we leave the concrete details to the reader \cite[footnote 9, p. 96-97]{c2}.

(ii) If $\gta\subset\an(\R^n)$ is an ideal, we have 
$$
\sup\{{\rm card}({\mathfrak F}):\ {\mathfrak F}\subset\gta\text{ is sharp}\}=n-\dim(\ceros_\C(\gta))\leq n-\dim(\ceros(\gta)).
$$
If $\gtq$ is a primary ideal of $\an(\R^n)$, we obtain by Theorem \ref{complexreal}
$$
\ideal(\ceros(\gtq))=\sqrt{\gtq}\quad\text{ if and only if}\quad\sup\{{\rm card}({\mathfrak F}):\ {\mathfrak F}\subset\gtq\text{ is sharp}\}=n-\dim(\ceros(\gtq)).
$$
\end{remarks}

\begin{lem}\label{prin}
Let $\gtq\subset\an(\R^n)$ be a primary saturated ideal. Then $\gtq$ is 
a principal ideal if and only if $\sqrt{\gtq}$ is a principal ideal.
\end{lem}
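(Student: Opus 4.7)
My plan is to prove the two implications separately. The ``if'' direction is essentially a repeated application of the primary-ideal definition combined with Krull's intersection theorem applied stalk-by-stalk; the ``only if'' direction will require constructing a global generator of $\sqrt{\gtq}$ by sheaf-theoretic methods and then producing a uniform power bound by passing to the complex Stein extension.

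For the ``if'' direction, assume $\sqrt{\gtq}=g\an(\R^n)$, which we may take to be nonzero. Since $\gtq\subset(g)$ and $\an(\R^n)$ is an integral domain, one has $\gtq=g\cdot(\gtq:g)$, and the primariness of $\gtq$ forces the following observation: whenever $g^k\notin\gtq$, the inclusion $(\gtq:g^k)\subset\sqrt{\gtq}=(g)$ holds, hence $\gtq\subset(g^{k+1})$. Iterating from $k=1$, either some minimal $m$ satisfies $g^m\in\gtq$, in which case the chain of inclusions collapses to $\gtq=g^m\an(\R^n)$, or else $\gtq\subset\bigcap_{m\geq 1}(g^m)$; but the latter intersection is zero, via Krull's theorem in each stalk $\an_{\R^n,x}$ for $x\in\ceros(g)$ combined with the identity principle on the connected manifold $\R^n$. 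This contradicts $\gtq\neq 0$, so $\gtq$ is principal.

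For the ``only if'' direction, assume $\gtq=f\an(\R^n)$. I plan to build a single generator $g$ of $\sqrt{\gtq}$ by analysing the coherent ideal sheaf $\mathcal{G}\subset\an_{\R^n}$ with stalks $\mathcal{G}_x=\sqrt{f_x\an_{\R^n,x}}$. In the Noetherian UFD $\an_{\R^n,x}$ each stalk is principal, generated by the squarefree part of $f_x$, so $\mathcal{G}$ is locally principal, hence an invertible subsheaf of $\an_{\R^n}$. Extending to $\widetilde{\mathcal{G}}:=\sqrt{F\an_\Omega}$ on an invariant contractible Stein open neighbourhood $\Omega$ of $\R^n$ in $\C^n$ (where $F$ is a holomorphic extension of $f$), the vanishing of $\operatorname{Pic}(\Omega)$ provides a holomorphic global generator $\widetilde{G}$. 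Invariantising through $\sigma$ --- if $\overline{\widetilde{G}\circ\sigma}=u\widetilde{G}$ for a unit $u$ satisfying $u\cdot\overline{u\circ\sigma}=1$, take $v:=\sqrt{u}$ (available on the simply connected $\Omega$, with the correct branch selected by the condition $v\cdot\overline{v\circ\sigma}=1$ on $\R^n$) and replace $\widetilde{G}$ by $v\widetilde{G}$ --- produces an invariant generator $G$. Setting $g:=G|_{\R^n}$, a direct check in the UFD shows that the complex squarefree part of $F_x$ coincides with the real squarefree part of $f_x$ (each real irreducible factor of $f_x$ either stays irreducible in $\an_{\C^n,x}$ or splits as a conjugate pair whose product equals itself), so $g_x$ generates $\mathcal{G}_x$ at every $x\in\R^n$.

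To conclude $\sqrt{\gtq}=g\an(\R^n)$, the inclusion $\sqrt{\gtq}\subset g\an(\R^n)$ follows from stalk-wise divisibility in the UFD $\an_{\R^n,x}$ and the saturation of the principal ideal $g\an(\R^n)$. For the reverse inclusion one needs a \emph{uniform} exponent $k$ with $g^k\in\gtq$: I would extract this by applying Theorem~\ref{fo} to the closed ideal $\gtQ:=F\an(\Omega)$ of the Stein algebra $\an(\Omega)$, whose normal primary decomposition has finitely many components (at most two, corresponding to the irreducible germs $Z_{\R^n}$ and $\sigma(Z_{\R^n})$ of $\ceros_\C(\gtq)$ furnished by Lemma~\ref{previo0}), yielding $\h(\gtQ)<\infty$ and $(\sqrt{\gtQ})^{\h(\gtQ)}\subset\gtQ$; restricting the resulting $G^{\h(\gtQ)}\in F\an(\Omega)$ to $\R^n$ gives the desired $g^{\h(\gtQ)}\in f\an(\R^n)=\gtq$. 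The main obstacle is this ``only if'' direction --- producing a genuinely $\sigma$-invariant global generator of $\widetilde{\mathcal{G}}$ from Picard triviality, and then transporting the uniform height $\h$ from the complex Stein setting back to $\an(\R^n)$; the ``if'' direction, by contrast, is purely ideal-theoretic once stalkwise Krull is available.
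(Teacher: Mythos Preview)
Your argument is correct in both directions, but considerably more elaborate than the paper's, and in the ``only if'' direction you overlook the key simplifying tool: Lemma~\ref{lemmaprimary}(i), which says that for a saturated primary ideal $\gtq$, membership $F\in\gtq$ is equivalent to the stalk condition $F_x\in\gtq\an_{X,x}$ at a \emph{single} point $x\in\ceros(\gtq)$.

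In the ``if'' direction your Krull case is superfluous: since $g$ generates $\sqrt{\gtq}$ one has $g\in\sqrt{\gtq}$, so some $g^m\in\gtq$ automatically, and the minimal such $m$ gives $\gtq=(g^m)$ directly by your colon-ideal computation. The paper states exactly this conclusion in one line.

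In the ``only if'' direction the paper is shorter on both fronts. For the global generator of the radical sheaf it cites Cain's result \cite{cain} as a black box, producing $h\in\an(\R^n)$ with $h_x\an_{\R^n,x}=\sqrt{f_x\an_{\R^n,x}}$ for every $x$; your Picard-triviality-plus-invariantisation construction is essentially a reproof of this. For the uniform exponent the paper simply picks any $x\in\ceros(\gtq)$, finds $m$ with $h_x^m\in f_x\an_{\R^n,x}=\gtq\an_{\R^n,x}$, and concludes $h^m\in\gtq$ by Lemma~\ref{lemmaprimary}(i) --- one line in place of your passage through Theorem~\ref{fo} on $\an(\Omega)$. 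Your Stein-algebra route does work (after shrinking $\Omega$ so that $\ceros(F)$ has finitely many irreducible components, whence $F\an(\Omega)$ has finite normal primary decomposition since principal ideals in the regular local stalks are unmixed), but it is a substantial detour. What your approach buys is self-containment: you avoid the external reference \cite{cain}, at the price of reconstructing its content from Stein theory.
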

\begin{proof}
For the `if' implication, assume that $\sqrt{\gtq}$ is a principal ideal generated by $f\in\an(\R^n)$. One can check that $\gtq$ is generated by $f^k$ where $k=\min\{m\geq1:\ f^m \in\gtq\}$. 

Conversely, assume that $\gtq$ is generated by $f\in\an(\R^n)$. By \cite[Prop.3]{cain} there exists $h\in\an(\R^n)$ such that $h_x\an_x=\sqrt{f_x\an_x}$ for each point $x \in\R^n$. We claim $\sqrt{\gtq}=h\an(\R^n)$. 

Indeed, if $g\in\sqrt{\gtq}=\sqrt{f\an(\R^n)}$, the germ $g_x\in\sqrt{f_x\an_x}=h_x\an_x$ for each $x\in\R^n$, so $g\in h\an(\R^n)$. Now we prove $h\in\sqrt{\gtq}$. Pick a point $x\in\ceros(\gtq)$. As $h_x\an_x=\sqrt{f_x\an_x}$, we find an integer $m$ such that $h_x^m\in f_x\an_x=\gtq\an_x$. Since $\gtq$ is a saturated primary ideal, Lemma \ref{lemmaprimary} implies $h^m\in\gtq$, as required.
\end{proof}

\begin{cor}
Let $\gtq\subset\an(\R^n)$ be a primary saturated ideal. We have
\begin{itemize}
\item[(i)] If $\dim(\ceros(\gtq))=n-1$, then $\ideal(\ceros(\gtq))=\sqrt{\gtq}$.
\item[(ii)] If $\dim(\ceros(\gtq))=n-2$, then $\ideal(\ceros(\gtq))=\sqrt{\gtq}$ if and only if $\gtq$ is not principal.
\end{itemize}
\end{cor}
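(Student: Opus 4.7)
The plan is to reduce both parts to Theorem \ref{complexreal}, which tells us that for a primary saturated $\gtq$ the equality $\ideal(\ceros(\gtq))=\sqrt{\gtq}$ is equivalent to $\dim(\ceros_{\C}(\gtq))=\dim(\ceros(\gtq))$. First observe that in both parts $\gtq\neq(0)$ (otherwise $\dim\ceros(\gtq)=n$), so we may pick $f\in\gtq\setminus\{0\}$ with a holomorphic extension $F$ defined on an invariant Stein neighbourhood $\Omega$ of $\R^n$; then $\ceros_{\C}(\gtq)\subset\ceros(F)_{\R^n}$ gives the bound $\dim(\ceros_{\C}(\gtq))\leq n-1$. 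The opposite inequality $\dim(\ceros_{\C}(\gtq))\geq\dim(\ceros(\gtq))$ is standard (the real zero-set is the real trace of the complex germ). For part (i), these two inequalities force $\dim(\ceros_{\C}(\gtq))=n-1=\dim(\ceros(\gtq))$, and Theorem \ref{complexreal} concludes.

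For part (ii) we know $\dim(\ceros_{\C}(\gtq))\in\{n-2,n-1\}$, so by Theorem \ref{complexreal} the equality $\ideal(\ceros(\gtq))=\sqrt{\gtq}$ fails precisely when $\dim(\ceros_{\C}(\gtq))=n-1$. So I need to prove:
\emph{$\dim(\ceros_{\C}(\gtq))=n-1$ if and only if $\gtq$ is principal.} The $(\Leftarrow)$ direction is immediate: if $\gtq=f\an(\R^n)$ with $f\neq0$, then the coherent extension of $\gtq\an_{\R^n}$ is $F\an_{\C^n}$ near $\R^n$, whose support is a pure $(n-1)$-dimensional hypersurface germ. For $(\Rightarrow)$, by Lemma \ref{prin} it is enough to show that $\sqrt{\gtq}$ is principal.

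To prove $\sqrt{\gtq}$ principal, note that $\sqrt{\gtq}$ is a saturated prime ideal (primary saturated forces its radical to be saturated via Lemma \ref{lemmaprimary}(ii)) and $\ceros_{\C}(\sqrt{\gtq})=\ceros_{\C}(\gtq)$. Extend $\sqrt{\gtq}\an_{\R^n}$ to an invariant coherent ideal sheaf $\Jhaz$ on an invariant Stein neighbourhood $\Omega$ of $\R^n$; after shrinking, $\Jhaz_z=\ideal(Y_z)$ for all $z\in Y:=\supp(\Jhaz)$, where $Y$ is a pure $(n-1)$-dimensional invariant hypersurface. By factoriality of $\an_{\C^n,z}$ each stalk $\ideal(Y_z)$ is principal, so $\Jhaz$ is an invertible ideal subsheaf of $\an_\Omega$. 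Choosing $\Omega$ contractible (a tube neighbourhood of $\R^n$), $H^1(\Omega,\an_\Omega^*)=0$ via the exponential sequence, so $\Jhaz=F\an_\Omega$ for some global $F\in H^0(\Omega,\an_\Omega)$; restricting the resulting invariant generator to $\R^n$ produces a single generator of $H^0(\R^n,\sqrt{\gtq}\an_{\R^n})=\sqrt{\gtq}$.

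The main obstacle will be the \emph{invariance} step: a priori the global generator $F$ is only holomorphic, not invariant. Since $Y$ is invariant, $\overline{F\circ\sigma}=uF$ for some unit $u\in\an(\Omega)^*$, and the cocycle identity $u\cdot\overline{u\circ\sigma}=1$ has to be resolved. On the contractible Stein $\Omega$ we may write $u=e^{g}$; the cocycle condition forces $g+\overline{g\circ\sigma}=2\pi\sqrt{-1}\,k$, and applying $\sigma$ to this equality shows $k=0$, so setting $h:=g/2$ gives $\overline{h\circ\sigma}=-h$ and the modified generator $F':=e^{h}F$ is invariant and still generates $\Jhaz$. Its restriction to $\R^n$ generates $\sqrt{\gtq}$, and Lemma \ref{prin} then delivers a principal generator of $\gtq$ itself, completing $(\Rightarrow)$.
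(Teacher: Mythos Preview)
Your argument for (i) is identical to the paper's.

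For (ii) both you and the paper reduce via Theorem~\ref{complexreal} to the equivalence ``$\gtq$ principal $\Longleftrightarrow \dim\ceros_\C(\gtq)=n-1$'', but handle it differently. The paper invokes the sharp-family formula of Remark~\ref{bd}(ii): a principal ideal cannot contain a sharp family of cardinality~$2$ (the complex zero-set of any two multiples of a single $f$ still contains $\ceros_\C(f)$), and conversely it asserts without further comment that non-principality forces $\dim\ceros_\C(\gtq)\le n-2$. You instead prove the implication $\dim\ceros_\C(\gtq)=n-1\Rightarrow\gtq$ principal by direct complex analysis: the ideal sheaf of the hypersurface $Y$ is invertible, hence globally principal on a contractible Stein neighbourhood, and your cocycle computation makes the generator $\sigma$-invariant; Lemma~\ref{prin} then transfers principality from $\sqrt{\gtq}$ to $\gtq$. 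In effect you supply explicitly the step the paper leaves implicit, at the price of heavier complex-analytic machinery (the exponential sequence and an Oka-type triviality of $H^1(\Omega,\an_\Omega^*)$) in place of the sharp-family bookkeeping.

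One point in your write-up deserves care. You claim that the coherent extension $\Jhaz$ of $\sqrt{\gtq}\,\an_{\R^n}$ satisfies $\Jhaz_z=\ideal(Y_z)$; this is not immediate, since the localisation $\sqrt{\gtq}\,\an_{\C^n,x}$ is not obviously reduced. The cleanest fix is to work with the reduced ideal sheaf $\ideal_Y$ from the outset: once you produce an invariant global generator $F$ of $\ideal_Y$, Lemma~\ref{pasito} shows both that $f:=F|_{\R^n}\in\sqrt{\gtq}$ and that every $g\in\sqrt{\gtq}$ has $G_z\in\ideal(Y_z)=F_z\an_{\Omega,z}$, whence $g\in f\,\an(\R^n)$ and $\sqrt{\gtq}=(f)$. (Alternatively, the equality $\Jhaz_x=\ideal(Y_x)$ for $x\in\R^n$ can be justified by combining Lemma~\ref{pasito} with Cartan's Theorem~A: both coherent sheaves have the same global sections, namely $\sqrt{\gtq}$, and global sections generate the stalks.)
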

\begin{proof}
(i) follows from Theorem \ref{complexreal} because
$$
n-1=\dim(\ceros(\gtq))\leq\dim(\ceros_\C(\gtq))\leq n-1.
$$

(ii) Assume first $\ideal(\ceros(\gtq))=\sqrt{\gtq}$. Then
$$
n-2=\dim(\ceros(\gtq))=\dim(\ceros_\C(\gtq))=n-\sup\{{\rm card}({\mathfrak F}):\ {\mathfrak F}\subset\gta\text{ is sharp}\};
$$
hence, $\gtq$ is not principal. Conversely, if $\gtq$ is not principal, then
$$
n-2=\dim(\ceros(\gtq))\leq\dim(\ceros_\C(\gtq))\leq n-2
$$
and by Theorem \ref{complexreal} we obtain $\ideal(\ceros(\gtq))=\sqrt{\gtq}$.
\end{proof}

\renewcommand\refname{References}


\begin{thebibliography}{ABFR2}
\renewcommand{\baselinestretch}{1}

\bibitem[ABF]{abf} F. Acquistapace, F. Broglia, J.F. Fernando: On Hilbert's 17th Problem and Pfister's multiplicative formulae for the ring of real-analytic functions. \em Ann. Sc. Norm. Super. Pisa Cl. Sci. \em (5) {\bf XXX} (2012, accepted), no. X, XXX-XXX.

\bibitem[ABFR1]{abfr1} F. Acquistapace, F. Broglia, J.F. Fernando, J.M. Ruiz: On the Pythagoras numbers of real-analytic surfaces, {\em Ann. Sci. \'Ecole Norm. Sup.} {\bf38} (2005), no. 5, 751-772.

\bibitem[ABFR2]{abfr2} F. Acquistapace, F. Broglia, J.F. Fernando, J.M. Ruiz: On the Pythagoras numbers of real-analytic curves. \em Math. Z. \em {\bf 257} (2007) no. 1, 13--21.

\bibitem[ABFR3]{abfr3} F. Acquistapace, F. Broglia, J.F. Fernando, J.M. Ruiz: On the finiteness of Pythagoras numbers of real meromorphic functions, {\em Bull. Soc. Math. France} {\bf 138} (2010), no. 2, 291--307.

\bibitem[ABN]{abn} F. Acquistapace, F. Broglia, A. Nicoara: A Nullstellensatz for \L ojasiewicz ideals, {\em Rev. Mat. Iberoam.} {\bf XXX} (2012, accepted), no. X, XXX-XXX. 

\bibitem[ABS]{abs} F. Acquistapace, F. Broglia, M. Shiota: The finiteness property and \L ojasiewicz inequality for global semianalytic sets. {\em Adv. in Geom.} {\bf5} (2005), 453--466.

\bibitem[A]{ad0} W.A. Adkins: A real-analytic Nullstellensatz for two dimensional manifolds. {\em Boll. Un. Mat. Ital.} {\bf 14B} (1977) no. 5, 888--903.

\bibitem[AL]{al} W.A. Adkins, J.V. Leahy: A global real-analytic Nulstellensatz. {\em Duke Math. Journal} {\bf 43} (1976), no. 1, 81--86. 

\bibitem[dB1]{db} P. de Bartolomeis: Algebre di Stein nel caso reale. {\em Rend. Accad. Naz.} {\bf XL} no. 5 1/2 (1975/76), 105--144 (1977).

\bibitem[dB2]{db1} P. de Bartolomeis: Una nota sulla topologia delle algebre reali coerenti. {\em Boll. Un. Mat. Ital.} {\bf 13A} (1976) no. 5, 123--125. 

\bibitem[BM]{bm} E. Bierstone, P.D. Milman: Semianalytic and subanalytic sets. \em Inst. Hautes \'Etudes Sci. Publ. Math. \em {\bf67} (1988), 5–42.

\bibitem[BCR]{bcr} J. Bochnak, M. Coste, M.F. Roy: Real algebraic geometry. Translated from the 1987 French original. Revised by the authors. {\em Ergebnisse der Mathematik und ihrer Grenzgebiete} (3), {\bf 36}. Springer-Verlag, Berlin, 1998.

\bibitem[BKS]{bks} J. Bochnak, W.Kucharz, M.Shiota: On equivalence of ideals of real-analytic functions and the 17th Hilbert problem. {\em Invent. Math.} { \bf 63} (1981), no. 3, 403–421. 

\bibitem[BP]{bp} F. Broglia, F. Pieroni: The Nullstellensatz for real coherent analytic surfaces. {\em Rev. Mat. Iberoam.} {\bf25} (2009), no. 2, 781--798. 

\bibitem[C1]{c1} H. Cartan: Id\'eaux et modules de fonctions analytiques de variables complexes. {\em Bull. Soc. Math. France} {\bf78}, (1950), 29–64.

\bibitem[C2]{c2} H. Cartan: Vari\'et\'es analytiques r\'eelles et vari\'et\'es analytiques complexes. {\em Bull. Soc. Math. France} {\bf85} (1957), 77--99.

\bibitem[Ca]{cain} B.E. Cain: A two color theorem for analytic maps in $\R^n$. {\em Proc. Amer. Math. Soc.} {\bf 39} (1973), 261-266.

\bibitem[D]{d} J.P. D'Angelo:. Real and complex geometry meet the Cauchy-Riemann equations. \em Analytic and algebraic geometry\em, 77–182, IAS/Park City Math. Ser., {\bf17}, \em Amer. Math. Soc.\em, Providence, RI, 2010.

\bibitem[DM]{dm} C.N. Delzell, J.J. Madden: Lattice-ordered rings and semialgebraic geometry. I. \em Real analytic and algebraic geometry \em (Trento, 1992), 103–129, de Gruyter, Berlin, 1995.

\bibitem[De]{dem} J.P. Demailly: Construction d'hypersurfaces irr\'eductibles avec lieu singulier donn\'e dans $\C^n$. {\em Ann. Inst. Fourier (Grenoble)} {\bf30} (1980), no. 3, 219–236.

\bibitem[Fe]{fe} J.F. Fernando: On the Hilbert's 17th problem for analytic functions on dimension 3, {\em Comment. Math. Helv.} {\bf83} (2008), no. 1, 67--100.

\bibitem[FG]{fg} J.F. Fernando, J.M. Gamboa: Real Algebra from Hilbert's 17th Problem. Dip. Mat. Univ. Pisa, {\em Dottorato di Ricerca in Matematica}, Edizioni ETS, Pisa (2012).

\bibitem[F]{of} O. Forster: Prim\"arzerlegung in Steinschen Algebren. \em Math. Ann. \em{\bf154} (1964), 307--329.

\bibitem[GT]{gt} M. Galbiati, A. Tognoli: Alcune propriet\`a delle variet\`a algebriche reali. \em Ann. Scuola Norm. Sup. Pisa \em (3) {\bf27} (1973), 359–404 (1974).

\bibitem[GR]{gr} R. Gunning, H. Rossi: Analytic functions of several complex variables. Englewood Cliff: Prentice Hall, 1965.

\bibitem[H]{h} M.W. Hirsch: Differential topology. {\em Graduate Texts in Mathematics}, {\bf 33}. Springer--Verlag, New York--Heidelberg--Berlin: 1976.

\bibitem[Jw]{j2} P. Jaworski: Extensions of orderings on fields of quotients of rings of real-analytic functions. {\em Math. Nachr.} {\bf125} (1986), 329--339.

\bibitem[K]{k} J.J Kohn: Subellipticity of the $\ol{\partial}$-Neumann problem on pseudo-convex domains: sufficient conditions. {\em Acta Math}. {\bf142} (1979), no. 1--2, 79--122.

\bibitem[M]{m} B. Malgrange: Ideals of differentiable functions. \em Tata Institute of Fundamental Research Studies in Mathematics\em, {\bf3} Tata Institute of Fundamental Research, Bombay; Oxford University Press, London: 1967.

\bibitem[N]{n} J.K. Nowak: On the real algebra of quasianalytic function germs. IMUJ Prepint 2010/08.\\ {\tt http://www2.im.uj.edu.pl/badania/preprinty/imuj2010/pr1008.pdf}

\bibitem[Rz]{rz1} J.M. Ruiz: On Hilbert's 17th problem and real Nullstellensatz for analytic functions. {\em Math. Z.} {\bf190} (1985), no. 3, 447--454.

\bibitem[S]{s} Y.T. Siu: Hilbert Nullstellensatz in global complex analytic case. {\em Proc. Amer. Math. Soc.} {\bf 19} (1969), 296--298.

\bibitem[To]{t} A. Tognoli: Propriet\`a globali degli spazi analitici reali. {\em Ann. Mat. Pura Appl.} {\bf 75} (1967) no. 4, 143--218.

\bibitem[WB]{wb} H. Whitney, F. Bruhat: Quelques propi\'et\'es fondamentales des ensembles analytiques r\'eels. \em Comment. Math. Helv. \em {\bf33} (1959), 132--160.
\end{thebibliography}
\end{document}